\documentclass[11pt,reqno]{amsart}
\usepackage{amsmath, amsthm, amssymb}
\usepackage{enumitem}
\usepackage[foot]{amsaddr}

\usepackage{fullpage}
\usepackage[many]{tcolorbox}
\usepackage{xcolor}
\usepackage{wasysym}
\usepackage{mathtools}
\usepackage{scalerel}
\usepackage{bbm}
\usepackage[all]{xy}
\usepackage{mathrsfs}
\usepackage{adjustbox}
\usepackage{tensor}

\usepackage{tikz}
\usepackage{adjustbox}
\usetikzlibrary{arrows,backgrounds,patterns.meta}
\usetikzlibrary{positioning,shadings,cd}
\usetikzlibrary{shapes}
\usetikzlibrary{backgrounds}
\usetikzlibrary{decorations,decorations.pathreplacing,decorations.markings,decorations.pathmorphing}
\tikzstyle{snake}=[decorate, decoration={snake, segment length=1mm, amplitude=.5mm}]
\usetikzlibrary{fit,calc,through}
\usetikzlibrary{external}
\newcommand{\tikzmath}[2][]
{\vcenter{\hbox{\begin{tikzpicture}[#1]#2\end{tikzpicture}}}
}

\tikzset{super thick/.style={line width=3pt}}
\tikzstyle{far>}=[decoration={markings, mark=at position 0.75 with {\arrow{>}}}, postaction={decorate}]
\tikzstyle{mid>}=[decoration={markings, mark=at position 0.55 with {\arrow{>}}}, postaction={decorate}]
\tikzstyle{mid<}=[decoration={markings, mark=at position 0.55 with {\arrow{<}}}, postaction={decorate}]
\tikzset{super thick/.style={line width=3pt}}
\tikzstyle{far>}=[decoration={markings, mark=at position 0.75 with {\arrow{>}}}, postaction={decorate}]
\tikzstyle{mid>}=[decoration={markings, mark=at position 0.55 with {\arrow{>}}}, postaction={decorate}]
\tikzstyle{mid<}=[decoration={markings, mark=at position 0.55 with {\arrow{<}}}, postaction={decorate}]
\tikzstyle{knot}=[preaction={super thick, white, draw}]
\tikzstyle{coupon}=[draw, very thick, rectangle, rounded corners=5pt]
\tikzset{Rightarrow/.style={double equal sign distance,>={Implies},->},
triplecd/.style={-,preaction={draw,Rightarrow}},
quadruplecd/.style={preaction={draw,Rightarrow,
shorten >=0pt
},
shorten >=1pt,
-,double,double
distance=0.2pt}}
\tikzset{
    tripleline/.style args={[#1] in [#2] in [#3]}{
        #1,preaction={preaction={draw,#3},draw,#2}
    }
}
\tikzstyle{triple}=[tripleline={[line width=.15mm,black] in
      [line width=.7mm,white] in
      [line width=1mm,black]}] 
\tikzset{
    quadrupleline/.style args={[#1] in [#2] in [#3] in [#4]}{
        #1,preaction={preaction={preaction={draw,#4},draw,#3}, draw,#2}
    }
}
\tikzstyle{quadruple}=[quadrupleline={[line width=.3mm,white] in
      [line width=.6mm,black] in
      [line width=1.2mm,white] in
      [line width=1.5mm,black]}]

\definecolor{violet}{RGB}{148,0,211}
\definecolor{DarkGreen}{RGB}{0,150,0}
\definecolor{rufous}{HTML}{A81C07}

\usepackage[plainpages=false,hypertexnames=false,pdfpagelabels]{hyperref}
\definecolor{medium-blue}{rgb}{0,0,.8}
\hypersetup{colorlinks, linkcolor={purple}, citecolor={medium-blue}, urlcolor={medium-blue}}
\newcommand{\arxiv}[1]{\href{http://arxiv.org/abs/#1}{\tt arXiv:\nolinkurl{#1}}}
\newcommand{\arXiv}[1]{\href{http://arxiv.org/abs/#1}{\tt arXiv:\nolinkurl{#1}}}
\newcommand{\mathscinet}[1]{\href{http://www.ams.org/mathscinet-getitem?mr=#1}{\tt #1}}
\newcommand{\doi}[1]{\href{http://dx.doi.org/#1}{{\tt DOI:#1}}}

\newcommand{\googlebooks}[1]{(preview at \href{https://books.google.com/books?id=#1}{google books})}

\DeclareMathOperator{\Ad}{Ad}

\DeclareMathOperator{\End}{End}

\DeclareMathOperator{\Hom}{Hom}
\DeclareMathOperator{\id}{id}
\DeclareMathOperator{\Id}{Id}

\DeclareMathOperator{\Irr}{Irr}

\DeclareMathOperator{\tr}{tr}

\renewcommand{\Im}{\operatorname{Im}}

\newcommand{\set}[2]{\left\{#1 \middle| #2\right\}}

\newcommand{\Hilb}{\mathsf{Hilb}}

\newcommand{\SSS}{\mathsf{SSS}}

\def\semicolon{;}
\def\applytolist#1{
    \expandafter\def\csname multi#1\endcsname##1{
        \def\multiack{##1}\ifx\multiack\semicolon
            \def\next{\relax}
        \else
            \csname #1\endcsname{##1}
            \def\next{\csname multi#1\endcsname}
        \fi
        \next}
    \csname multi#1\endcsname}

\def\calc#1{\expandafter\def\csname c#1\endcsname{{\mathcal #1}}}
\applytolist{calc}QWERTYUIOPLKJHGFDSAZXCVBNM;
\def\bbc#1{\expandafter\def\csname bb#1\endcsname{{\mathbb #1}}}
\applytolist{bbc}QWERTYUIOPLKJHGFDSAZXCVBNM;
\def\bfc#1{\expandafter\def\csname bf#1\endcsname{{\mathbf #1}}}
\applytolist{bfc}QWERTYUIOPLKJHGFDSAZXCVBNM;
\def\sfc#1{\expandafter\def\csname s#1\endcsname{{\sf #1}}}
\applytolist{sfc}QWERTYUIOPLKJHGFDSAZXCVBNM;
\def\fc#1{\expandafter\def\csname f#1\endcsname{{\mathfrak #1}}}
\applytolist{fc}QWERTYUIOPLKJHGFDSAZXCVBNM;
\def\rmc#1{\expandafter\def\csname rm#1\endcsname{{\mathrm #1}}}
\applytolist{rmc}QWERTYUIOPLKJHGFDSAZXCVBNM;
\def\scrc#1{\expandafter\def\csname scr#1\endcsname{{\mathscr #1}}}
\applytolist{scrc}QWERTYUIOPLKJHGFDSAZXCVBNM;

\numberwithin{equation}{section}

\theoremstyle{plain}
\newtheorem{theorem}[equation]{Theorem}
\newtheorem*{thm*}{Theorem}
\newtheorem{corollary}[equation]{Corollary}
\newtheorem{lemma}[equation]{Lemma}
\newtheorem{proposition}[equation]{Proposition}

\newtheorem*{claim*}{Claim}

\newtheorem{thmalpha}{Theorem}

\newtheorem{definitionproposition}[equation]{Definition-Proposition}

\theoremstyle{definition}
\newtheorem{definition}[equation]{Definition}

\newtheorem*{trick*}{Trick}
\newtheorem{construction}[equation]{Construction}

\newtheorem{fact}[equation]{Fact}

\newtheorem{remark}[equation]{Remark}

\title{Superselection theory for 2D braided quantum spin systems via Connes fusion}

\author[G.~Faurot]{Gregory Faurot}
\author[C.~Li]{Charlton Li}
\author[D.~Penneys]{David Penneys}
\author[E.~Sherman]{Emeline Sherman}

\begin{document}

\begin{abstract}
The article [\arxiv{2410.21454}] constructed a braided $\mathrm{W^*}$-tensor category of superselection sectors associated to a net of von Neumann algebras on a suitably geometric poset using localized and transportable endomorphisms.
In this article, we construct an equivalent tensor product and braiding using the Connes fusion tensor product without localizing, using ideas from [\arxiv{1812.04470}] for conformal nets.
As an application, we prove that the category of superselection sectors associated to the 2D braided quantum spin system constructed from a unitary braided fusion category $\mathcal{B}$ in [\arxiv{2506.19969}] is equivalent to $\mathsf{Hilb}(\mathcal{B})^{\rm op}$, the opposite of the unitary ind-completion of $\mathcal{B}$, as a braided $\mathrm{W^*}$-tensor category.
\end{abstract}

\maketitle

\section{Introduction}
The Haag-Kastler approach to algebraic quantum field theory assigns local algebras $\fA_R$ of observables to regions $R$ of spacetime \cite{MR165864}.
The $\rmC^*$ inductive limit $\fA:=\varinjlim \fA_R$ over the net of local algebras is called the \emph{quasi-local algebra}, and usually comes equipped with a standard or vacuum representation.
The Doplicher-Haag-Roberts theory of \emph{superselection sectors} \cite{MR0297259} studies representations of $\mathfrak{A}$ which satisfy a certain \emph{superselection criterion}: they are equivalent to the vacuum representation outside of any distinguished double cone.
This criterion is used to show that such admissible representations are equivalent to \emph{localized} and \emph{transportable} endomorphisms of $\mathfrak{A}$. 
Using these endomorphisms allows for the construction of a braided $\rmC^*$-tensor category, under enough geometric structure of spacetime.

The DHR framework has found applications in low-dimensional conformal field theory \cite{MR660538,MR1231644} and topologically ordered quantum spin systems \cite{MR2804555,MR4362722} to construct braided $\rmC^*$-tensor categories of superselection sectors.
Both of these approaches were simultaneously generalized in \cite{MR4927814} to the setting of a net of von Neumann algebras associated to an involutive poset.\footnote{Another generalization based on $\rmC^*$-prefactorization algebras appeared recently in \cite{MR4998903}.}
That is, we start with a poset $\cP$ equipped with an order-reversing involution $p\mapsto p'$ (satisfying $p\leq q$ implies $q'\leq p'$ and $p''=p$ for all $p,q\in \cP$), together with a vacuum Hilbert space $H$ and an assignment of a von Neumann algebra $A_p\subset B(H)$ for every $p\in \cP$.
This net of von Neumann algebras is required to satisfy three axioms:
\begin{itemize}
\item 
(isotony)
$p\leq q$ implies $A_p\subseteq A_q$,
\item 
(Haag duality)
$A_p' = A_{p'}$ for all $p\in \cP$, and 
\item 
(absorbing) the vacuum $H$ is \emph{absorbing} for each $A_p$, i.e., for every other $A_p$-module $K$, $H\oplus K\cong H$ as $A_p$-modules.\footnote{All Hilbert spaces in this article are assumed to be separable, and all modules are assumed to be normal.}
\end{itemize}
The final absorbing property is equivalent in this context to all the $A_p$ being properly infinite.

A \emph{superselection sector} of $(A_p)_{p\in \cP}$ is then another Hilbert space $K$ equipped with a family of representations $\pi_p: A_p\to B(K)$ satisfying:
\begin{itemize}
\item 
(isotony)
$\pi_q|_p=\pi_q$ whenever $p\leq q$,
\item 
(locality)
$\pi(A_{p'})\subset \pi(A_p)'$ for all $p\in \cP$, and
\item 
(absorbing) $K$ is absorbing for each $A_p$.
\end{itemize}
The final absorbing property is equivalent to each $\pi_p$ being injective; its purpose is to ensure that a superselection sector can be \emph{localized} in any $p\in \cP$, satisfying $K=H$ and $\pi_{p'}=\id_{A_{p'}}$.
If moreover $\cP$ satisfies certain \emph{geometric axioms} (see \S\ref{sec:GeometricAxioms} below), by considering the category $\SSS_p$ of superselection sectors localized in $p$, one can construct a tensor product $\circ_p$ and a braiding, endowing $\SSS_p$ with the structure of a braided $\rmW^*$-tensor category.
One then shows that localizing in any $p\in\cP$ produces an equivalent braided $\rmW^*$-tensor category.

For concrete applications, it can be highly disadvantageous to perform localization in order to compute the braiding and tensor product.
For example, \cite{MR1645078} used Connes fusion (relative tensor product) \cite{MR1303779,MR703809} to compute the fusion for the positive energy representations of the loop group of $SU(N)$, a special case of the WZW model (see \cite{MR1152377}).
Connes fusion was further implemented to compute the fusion and braiding of superselection sectors of conformal nets in \cite{MR4239831}, and it was used to compute the canonical twists associated to a conformal net in \cite{2605.18446}.

In this article, we adapt the Connes fusion approach of \cite{MR4239831} in the setting of conformal nets to the setting of general nets of von Neumann algebras.
Just as the Connes fusion of bimodules ${}_AH_B, {}_BK_C$ of von Neumann algebras can be computed in three equivalent ways by completing one of the following spaces:
\begin{itemize}
\item 
$\Hom(L^2B_B, H_B) \otimes_B K$,
\item 
$\Hom(L^2B_B, H_B) \otimes_B L^2B \otimes_B \Hom({}_BL^2B, {}_BK)$, or
\item 
$H\otimes_B \Hom({}_BL^2B, {}_BK)$,
\end{itemize}
so can the fusion of superselection sectors.
Using the same setup as \cite{MR4927814}, we construct three different versions of fusion by completing the spaces:
\begin{itemize}
\item $\Hom_{A_{p'}}(H,K)\otimes_{A_p} L$
\item $\Hom_{A_{p'}}(H, K)\otimes_{A_p} H\otimes_{A_q^{\rm op}} \Hom_{A_{q'}}(H, L)$
\item $K \otimes_{A_q^{\rm op}} \Hom_{A_{q'}}(H, L)$
\end{itemize}
where $K$ and $L$ are superselection sectors.\footnote{When the Hilbert space appears to the left of the hom space, we must take a relative product over the opposite von Neumann algebra as the Hilbert space is a left module.}
These spaces are all unitarily isomorphic, and we use all three of them, together with various geometric properties of our poset $\cP$, to construct an action of the whole net $(A_p)_{p\in \cP}$ on them.
(These unitary isomorphisms are compatible with the actions of the net of algebras.)
Focusing on the first monoidal structure, which we denote by $\boxtimes_p$, we use similar techniques to equip $\SSS$ with a braiding, endowing $(\SSS,\boxtimes_p)$ with the structure of a braided $\rmW^*$-tensor category.

The essential thing to check is that our new definition agrees is equivalent with the old definition.

\begin{thmalpha} \label{thm:A}
    $(\SSS, \boxtimes_p)$ and $(\SSS_p, \circ_p)$ are equivalent braided $\rmW^*$-tensor categories.
\end{thmalpha}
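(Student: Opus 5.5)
The plan is to use the inclusion functor $\iota\colon \SSS_p \hookrightarrow \SSS$ as the underlying functor of the equivalence and to upgrade it to a braided monoidal functor.

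\textbf{Step 1: $\iota$ is an equivalence.} An object of $\SSS_p$ is a superselection sector with $K=H$ and $\pi_{p'}=\id_{A_{p'}}$. Such objects are superselection sectors in the sense of $\SSS$, and morphisms are intertwiners for the whole net in both categories, so $\iota$ is fully faithful. For essential surjectivity, take $(K,\pi)\in\SSS$. Absorption for $A_{p'}$ gives a unitary $u\colon H\to K$ of $A_{p'}$-modules. Conjugating by $u$ yields a sector on $H$ with $A_{p'}$ acting identically, and Haag duality shows each $A_q$ is mapped into the correct algebras. Hence every object of $\SSS$ is isomorphic to one localized in $p$.

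\textbf{Step 2: the tensorator.} For $K,L\in\SSS_p$ I need a natural unitary
\[
\mu_{K,L}\colon K\boxtimes_p L=\overline{\Hom_{A_{p'}}(H,K)\otimes_{A_p}L}\longrightarrow H, \qquad x\otimes \eta\mapsto x\eta .
\]
Here $x\colon H\to K=H$ commutes with $A_{p'}$, so by Haag duality $x\in A_p$. The Connes fusion $\overline{A_p\otimes_{A_p}L}$ over $\Hom_{A_p'}(H,H)=A_p$ is canonically $L$, so $\mu$ is isometric with dense range, hence unitary.

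I then check that $\mu$ intertwines the net action. On $K\boxtimes_p L$, elements of $A_{p'}$ act as $\pi^L$. Elements of $A_p$ act by the left action on $\Hom_{A_{p'}}(H,K)$, i.e.\ by $\pi^K$ composed into $L$. This agrees with the action on $H$ corresponding to $\pi^K\circ\pi^L$, which is the composite $\circ_p$ of endomorphisms. The main obstacle is regions $q$ that are neither inside $p$ nor inside $p'$. There the action on $K\boxtimes_p L$ was built using the three unitarily equivalent models and the geometric axioms, by transporting through a region $r$ that contains $q$ and is suitably disjoint from $p$ or $p'$. I would first reduce to a generating family of such $q$, using the geometric axioms exactly as in the construction of $\circ_p$. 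Then I would show that both actions are determined by the same transport unitaries: in the third model $K\otimes_{A_q^{\rm op}}\Hom_{A_{q'}}(H,L)$ the action is visibly $\pi^K$ applied after moving $L$, which is the definition of $\pi^K\circ\pi^L$ on $A_q$ through transportability. Naturality of $\mu$ in $K$ and $L$ is immediate from the formula.

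\textbf{Step 3: monoidal and braided coherence.} For the hexagon-type associativity I would compare the canonical Connes fusion associator with the identity associator of $\circ_p$. On elementary tensors $x\otimes y\otimes\eta$ both sides reduce to $xy\eta$, so the diagram commutes. Unit compatibility is similar, with the vacuum $H$ as unit.

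For the braiding, I would express the $\boxtimes_p$ braiding, via the unitary isomorphisms between the three models, as the map that moves $L$'s hom-space factor to a region $q$ on the appropriate side of $p$, swaps, and moves back. Under $\mu$ this becomes precisely the DHR formula $\epsilon_{K,L}=\pi^L(u^*)\,u'\ldots$ built from the same choice of charge transporters. Independence of the choice of $q$ holds on both sides, so I can choose compatible data, and the two braidings agree. This verification is the second likely sticking point, because the sign and orientation conventions (hence the $^{\rm op}$ in the application) must match.

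Together these steps show that $(\iota,\mu)$ is a braided monoidal equivalence $(\SSS_p,\circ_p)\simeq(\SSS,\boxtimes_p)$.
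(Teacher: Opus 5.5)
Your architecture is the paper's: the inclusion functor, essential surjectivity from absorption, Haag duality $\Hom_{A_{p'}}(H,H)=A_p$ to build the tensorator, then explicit coherence checks. However, Step 2 fails on the order of composition.

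In your model $\Hom_{A_{p'}}(H,K)\otimes_{A_p}L$ the balancing relation is $fx\otimes\eta=f\otimes\pi^L_p(x)\eta$. So $x\otimes\eta\mapsto x\eta$, read literally as applying $x\in A_p$ to $\eta\in H$, is not well defined: compare $x\otimes\eta=1\otimes\pi^L_p(x)\eta$. The canonical identification $\overline{A_p\otimes_{A_p}L}\cong L$ that you invoke is $\mu(x\otimes\eta)=\pi^L_p(x)\eta$. With this $\mu$, an element $y\in A_p$ acts by
\[
\mu\bigl(\pi^K_p(y)x\otimes\eta\bigr)=\pi^L_p\bigl(\pi^K_p(y)\bigr)\,\pi^L_p(x)\eta ,
\]
and $A_{p'}$ acts trivially. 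Hence $\mu_{K,L}$ is a unitary intertwiner onto $L\circ_p K$, not $K\circ_p L$. In the paper's convention $(\pi^K\circ_p\pi^L)_q=\pi^K_a\circ\pi^L_q$ whenever $a\geq p,q$, so $A_p$ acts on $K\circ_pL$ by $\pi^K_p\circ\pi^L_p$. These two sectors are in general only unitarily equivalent, via $\gamma$. So your claim that the action ``agrees with $\pi^K\circ\pi^L$'' is false, and the associativity check ``both sides reduce to $xy\eta$'' inherits the same defect. As it stands, your $\mu$ is at best a tensorator for the reversed product. To land in $(\SSS_p,\circ_p)$ you would have to compose it with a braiding and redo all the coherence checks.

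The paper sidesteps this by proving the theorem for the right-sided fusion $K\boxtimes L(p)$, the completion of $K\otimes_{A_p}\Hom_{A_{p'}}(H,L)$, even though the introduction lists the left-sided model first. Its tensorator is $\nu(\eta\otimes g)=\pi^K_p(g)\eta$, using $\eta\otimes g=\pi^K_p(g)\eta\otimes 1$. Now $y\in A_p$ acts by $\eta\otimes\pi^L_p(y)g\mapsto\pi^K_p(\pi^L_p(y))\pi^K_p(g)\eta$, as required.

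Beyond this, your treatment of general $q$ and of the braiding is too vague to count as a proof, whereas the paper's checks are short and explicit.
\begin{itemize}
\item No ``generating family'' of regions is needed. By Lemma~\ref{lemma:saturation-diagonal-pair}, either there is $a\geq p,q$ or there is $d\leq p,q'$. In the first case, transporting through $K\boxtimes L(a)$ shows that $\nu$ carries $x\in A_q$ to $\pi^K_a(\pi^L_q(x))=(\pi^K\circ_p\pi^L)_q(x)$.
\item In the second case, on the dense span of $\eta\otimes g$ with $g\in\Hom_{A_{d'}}(H,L)$, the element $x$ acts as $\pi^K_q(x)$ on the first leg. This matches $(\pi^K\circ_p\pi^L)_q(x)=\pi^K_p(u)\pi^K_q(x)\pi^K_p(u^*)$ for any unitary $u\in\Hom_{A_{d'}}(H,L)$.
\item For the braiding, evaluate on $\eta\otimes g$ with $g\in\Hom_{A_{s'}}(H,L)$ and a unitary $u_r\in\Hom_{A_{r'}}(H,K)$. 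The Connes-fusion braiding followed by $\nu$, and $\nu$ followed by $\gamma_{K,L}=\pi^L_p(u_r)v_su_r^*\pi^K_p(v_s^*)$, both give $\pi^L_p(u_r)\,g\,u_r^*\eta$.
\end{itemize}
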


Our main reason for performing the above construction is to compute the fusion and braiding for the \emph{2D braided quantum spin systems} constructed in \cite{2506.19969} from a unitary braided fusion category (UBFC) equipped with a strong tensor generator $X\in\cB$.\footnote{We call $X\in \cB$ a \emph{strong tensor generator} if there is an $n>0$ such that every simple object of $\cB$ is isomorphic to a summand of $X^{\otimes n}$.}
In more detail, the 2D braided quantum spin system assigns 
a canonical object $X^{\otimes R}\in \cB$ to every \emph{disk-like region} $R$ in a $\bbZ^2$ lattice,\footnote{For a region $R\subset \bbZ^2$, we write $[R]=\bigcup_{v\in R} [v]$ where $[v]$ is the closed $\ell^\infty$-ball at $v$ of radius $1/2$.
We say $R$ is \emph{disk-like} if $\partial[R]\subset \bbR^2$ is homeomorphic to $S^1$.}
from which we define the local operator algebra
$\fA_R:=\End_\cB(X^{\otimes R})$.
Inclusions of disk-like regions give inclusions of the operator algebras $\fA_R$ by tensoring with identity strands, where the braiding of $\cB$ is used in a crucial way to make the inclusion well-defined.
\begin{equation}
\label{eq:InclusionOfDiskLikeRegions}
\tikzmath{
\foreach \y in {0,.5,...,2.5}{
\foreach \x in {0,.5,...,2.5}{
\filldraw (\x,\y) circle (.05cm);
}}
\draw[thick, blue] (.25,.25) -- (.75,.25) -- (.75,.75) -- (1.75,.75) -- (1.75,.25) -- (2.25,.25) -- (2.25,2.25) -- (1.75,2.25) -- (1.75,1.75) -- (.75,1.75) -- (.75,2.25) -- (.25,2.25) -- (.25,.25);
\node[blue] at (1.25,1.25) {$R$};
\draw[thick, red] (-.25,-.25) rectangle (2.75,2.75);
\node[red] at (1.25,.25) {$S$};
}
\qquad\qquad\rightsquigarrow\qquad\qquad
\tikzmath{
\foreach \x in {.5,1,1.5,2}{
\draw[thick,red] ($ (\x,2.5) + (-.15,.35)$) -- ($ (\x,2.5) + (.15,-.35)$);
}
\foreach \x in {1,1.5}{
\draw[thick,red] ($ (\x,2) + (-.15,.35)$) -- ($ (\x,2) + (.15,-.35)$);
}
\foreach \x in {0,2.5}{
\foreach \y in {0,.5,...,2.5}{
\draw[thick,red] ($ (\x,\y) + (-.15,.35)$) -- ($ (\x,\y) + (.15,-.35)$);
}}
\foreach \y in {.5,1,1.5,2}{
\foreach \x in {.5,2}{
\draw[thick,blue] (\x,\y) -- ($ (\x,\y) + (.15,-.35)$);
}}
\foreach \y in {1,1.5}{
\foreach \x in {1,1.5}{
\draw[thick,blue] (\x,\y) -- ($ (\x,\y) + (.15,-.35)$);
}}
\fill[fill=gray!60, opacity=.8] (.25,.25) -- (.75,.25) -- (.75,.75) -- (1.75,.75) -- (1.75,.25) -- (2.25,.25) -- (2.25,2.25) -- (1.75,2.25) -- (1.75,1.75) -- (.75,1.75) -- (.75,2.25) -- (.25,2.25) -- (.25,.25);
\draw[thick, blue] (.25,.25) -- (.75,.25) -- (.75,.75) -- (1.75,.75) -- (1.75,.25) -- (2.25,.25) -- (2.25,2.25) -- (1.75,2.25) -- (1.75,1.75) -- (.75,1.75) -- (.75,2.25) -- (.25,2.25) -- (.25,.25);
\foreach \y in {.5,1,1.5,2}{
\foreach \x in {.5,2}{
\draw[thick,blue] (\x,\y) -- ($ (\x,\y) + (-.15,.35)$);
}}
\foreach \y in {1,1.5}{
\foreach \x in {1,1.5}{
\draw[thick,blue] (\x,\y) -- ($ (\x,\y) + (-.15,.35)$);
}}
\node[blue] at (1.25,1.25) {$f$};
\foreach \x in {.5,1,1.5,2}{
\draw[thick,red,knot] ($ (\x,0) + (-.15,.35)$) -- ($ (\x,0) + (.15,-.35)$);
}
\foreach \x in {1,1.5}{
\draw[thick,red,knot] ($ (\x,.5) + (-.15,.35)$) -- ($ (\x,.5) + (.15,-.35)$);
}
\node at (1.25,-.65) {$f\in \fA_R\subset \fA_S$};
}
\end{equation}
Taking $X=\bigoplus_{b\in \Irr(\cB)}b$, the sum over all the simples of $\cB$, we get a canonical state on each $\fA_R$ given by
$\psi(f):= (i^\dag)^{\otimes R}\circ f \circ i^{\otimes R}$, where $i:1_\cB\hookrightarrow X$ is an isometry.
One then defines a net of von Neumann algebras on the poset of cones in $\bbR^2$ 
by setting $\fA_\Lambda:=\varinjlim_{R\subset \Lambda} \fA_R$
and $A_\Lambda :=\fA_\Lambda''$ acting on $L^2(\fA,\psi)$, called the \emph{2D braided categorical net}.

In \cite[Thm.~4.8]{2506.19969}, it was proven that the 
underlying $\rmW^*$-category (without the fusion or braiding) of the superselection sectors of the net $(A_\Lambda)$ is equivalent to $\Hilb(\cB)^{\rm op}$, the opposite category of the unitary ind-completion of $\cB$ \cite{MR3509018,MR3687214}.
The (abstract of v1 of the) article \cite{2506.19969} conjectured that the fusion and braiding should also capture the fusion and braiding of $\cB$.
We use our new Connes fusion description of the tensor product and braiding of superselection sectors to verify this conjecture. 

\begin{thmalpha} \label{thm:B}
    Let $\cB$ be a unitary braided fusion category with strong tensor generator $X \in \cB$.
    There is a unitary tensorator which extends the
    $\rmW^*$-equivalence $\Hilb(\cB)^{\rm op}\to \mathsf{SSS}(A)$ from \cite{2506.19969} to a braided monoidal equivalence, where $A=(A_\Lambda)$ is the 2D braided categorical net of von Neumann algebras associated to $(\cB,X)$.
\end{thmalpha}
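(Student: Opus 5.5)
We will use Theorem \ref{thm:A}: it suffices to build a unitary tensorator for the Connes fusion model $(\SSS,\boxtimes_p)$ and check compatibility with the Connes-fusion braiding. This avoids computing the localized endomorphisms $\circ_p$ directly. Fix a cone $\Lambda$ and write $\Lambda'$ for its complementary cone. Recall the $\rmW^*$-equivalence $F:\Hilb(\cB)^{\rm op}\to\SSS(A)$ of \cite{2506.19969}. On a simple $c\in\cB$, the sector $F(c)=K_c$ is the $L^2$-completion of morphisms in the lattice with an extra $c$-strand running off to infinity. By naturality and semisimplicity it is enough to work on simples $a,b,c\in\Irr(\cB)$ and extend to the ind-completion by direct sums.

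\textbf{Step 1: compute the hom space.} We first compute $\Hom_{A_{\Lambda'}}(H,K_a)$. We expect it to be the closure of operators built from $\Hom_\cB(X^{\otimes R}, X^{\otimes R}\otimes a)$ for disk-like $R\subset\Lambda$, with the $a$-strand leaving through $\Lambda$. Such an operator commutes with $A_{\Lambda'}$ because strands in $\Lambda'$ can be braided past it. We would show these creation-type operators are dense, using Haag duality and the fact that $K_a$ localizes in $\Lambda$.

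\textbf{Step 2: the tensorator.} Using the first model $\Hom_{A_{\Lambda'}}(H,K_a)\otimes_{A_\Lambda} K_b$, define
\[
\mu_{a,b}:\ \Hom_{A_{\Lambda'}}(H,K_a)\otimes_{A_\Lambda}K_b \to \bigoplus_{c}\Hom_\cB(c,a\otimes b)\otimes K_c
\]
diagrammatically: stack the $a$-creation diagram on top of the $b$-sector diagram, merging the two strands at infinity into a single $a\otimes b$ strand, and decompose it via $\Hom_\cB(c,a\otimes b)$. The opposite category appears because the external strand is composed contravariantly. Then:
\begin{itemize}
\item \emph{Isometry.} Compute the Connes fusion inner product $\langle \xi\otimes\eta,\xi'\otimes\eta'\rangle=\langle \eta, (\xi^*\xi')\eta'\rangle$. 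Here $\xi^*\xi'\in A_\Lambda$ is a closed diagram, and its value under $\psi$ matches the categorical inner product on $\Hom_\cB(c,a\otimes b)$ after normalizing by the state, i.e.\ by the factor coming from $i^{\otimes R}$ and quantum dimensions.
\item \emph{Density of the range.} This follows because every diagram with a $c$-strand factors through $a\otimes b$ for some $a,b$.
\item \emph{Intertwining.} We must check that $\mu$ intertwines the whole-net action constructed earlier, not just the $A_\Lambda$ and $A_{\Lambda'}$ actions. We plan to check this on $A_\Lambda$ and $A_{\Lambda'}$ and on one cone straddling the boundary, then invoke the paper's statement that the action is determined by these, using the three unitarily equivalent models.
\end{itemize}
Naturality and the associativity hexagon for $\mu$ then reduce to associativity of diagram stacking in $\cB$.

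\textbf{Step 3: braiding.} The braiding on $(\SSS,\boxtimes_p)$ is defined via the third model $K_a\otimes_{A_{\Lambda}^{\rm op}}\Hom(H,K_b)$, together with a transport of $\Lambda$ to a disjoint cone and back. We will compute it on the diagrams: moving the $a$-strand's exit point around the $b$-strand's produces a crossing, which equals the braiding $\beta_{a,b}$ of $\cB$ (or its inverse, which fixes the orientation conventions). Finally, by Theorem \ref{thm:A}, $F$ is a braided equivalence to $(\SSS_p,\circ_p)$ as well.

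\textbf{Expected obstacle.} The main difficulty will be the intertwining claim in Step 2, namely that $\mu$ respects the action of algebras of cones not contained in $\Lambda$ or $\Lambda'$. We will also need to verify that the transport in the braiding produces exactly one crossing, independent of the choices. We will handle both with an approximation argument: check on local algebras $\fA_R$ for large disks $R$, where the braided inclusion maps \eqref{eq:InclusionOfDiskLikeRegions} let us slide strands explicitly, and then pass to weak closures.
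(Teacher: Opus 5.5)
Your architecture matches the paper's: reduce to the Connes fusion model via Theorem~\ref{thm:A}, build the tensorator at finite level by stacking diagrams, extend it, and check coherence and braiding on dense creation-operator tensors. The intertwining step, which you rightly flag as the crux, has a genuine gap.

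The paper contains no result saying that the net action on $K_a(\Lambda)\boxtimes K_b$ is determined by $A_\Lambda$, $A_{\Lambda'}$ and one cone straddling $\partial\Lambda$, and such a claim would need its own proof. More to the point, you cannot evaluate the action of a straddling cone $q$ directly, nor of $\fA_R$ for a disk $R$ crossing $\partial\Lambda$ as in your fallback. By Proposition~\ref{prop:action-from-a-and-d}, that action is \emph{defined} by transport along an inclusion $K_a(\Lambda)\boxtimes K_b\hookrightarrow K_a(\Delta)\boxtimes K_b$ with $\Delta\geq\Lambda,q$, or along $K_a(\Gamma)\boxtimes K_b\hookrightarrow K_a(\Lambda)\boxtimes K_b$ with $\Gamma\leq\Lambda,q'$.

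The missing idea is naturality of the tensorator in the cone. You need to construct $\phi_\Delta\colon K_a(\Delta)\boxtimes K_b\to K_{a\otimes b}$ for every cone $\Delta$ and prove $\phi_\Delta\circ\iota_{\Lambda,\Delta}=\phi_\Lambda$ whenever $\Lambda\subseteq\Delta$. The paper does this by reordering the points of $R_n$ with a braid group element: first those in $\Lambda$, then those in $\Delta\setminus\Lambda$, then those in $R_n\setminus\Delta$. Once you have naturality, intertwining is formal. Each $\phi_\Delta$ intertwines $A_\Delta$ and $A_{\Delta'}$, because each finite-level map intertwines $\fA(\Delta_n)$ and $\fA(R_n\setminus\Delta)$. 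Transporting along $\Lambda\leq\Delta\geq q$ or $\Lambda\geq\Gamma\leq q'$ (one of which always exists) then gives intertwining with every $A_q$, with no special straddling cone.

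Two other steps rest on the wrong reason.

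\textbf{Surjectivity of $\mu_{a,b}$.} This does not follow from ``every $c$-diagram factors through $a\otimes b$''. For fixed $a,b$ you must write an arbitrary $g\in\cB(a\otimes b,X^{\otimes R_n})$ as a sum of terms of the form (an $a$-creation operator supported in $\Lambda_n$) composed with (a $b$-vector). The paper bends the $a$-strand up and inserts a resolution of the identity $\sum_j e_je_j^*=\id_{\bar a\otimes X^{\otimes\Lambda_n}}$ with $e_j\in\cB(X^{\otimes\Lambda_n},\bar a\otimes X^{\otimes\Lambda_n})$. Such $e_j$ exist because $X^{\otimes\Lambda_n}$ contains every simple, which is one place the strong-generator hypothesis is used.

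\textbf{Step~1.} Haag duality gives $\Hom_{A_{\Lambda'}}(H,K_a)=uA_\Lambda$ for a unitary $u$, but says nothing about approximating $u$ by finite-region creation operators. The paper instead identifies $\Hom_{\fA(R_n\setminus\Lambda)}(K_1^{R_n},K_a^{R_n})$ with $\cB(a\otimes X^{\otimes\Lambda_n},X^{\otimes\Lambda_n})$ by the semisimple Yoneda argument of Lemma~\ref{lemma:yoneda}. It then shows that the compressions $p_nF|_{K_1(R_n)}$ of any intertwiner $F$ converge boundedly and strongly to $F$, which is exactly the hypothesis of Lemma~\ref{lemma:dense-subsets-of-Connes-fusion}.

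\textbf{Minor.} The isometry check involves neither $\psi$ nor any normalization. Here $\xi^*\xi'$ is the element of $\fA(\Lambda_n)$ obtained by closing only the $a$-strand, and it acts on $K_b$ with its trace inner product; the resulting identity is exact.
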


\subsection*{Acknowledgments}

This article is the undergraduate research project of Charlton Li from OSUIM during Summer 2026, supervised by Gregory Faurot, Emeline Sherman, and David Penneys.
The program was supported by the Mathematics Department of The Ohio State University along with Penneys' NSF DMS grants 2154389 and 2554723.
The authors would like to thank 
Andr\'e Henriques,
Corey Jones,
Boris Kj{\ae}r,
Adri\`a Mar\'in-Salvador,
Pieter Naaijkens,
and
Daniel Wallick
for helpful conversations. 

\subsection*{AI use statement}
No LLMs were used for this manuscript nor any of the results it contains.

\section{Background}\label{sec:background}

In this section, we review the necessary background material from \cite{MR4927814} on geometric axioms for involutive posets, nets of von Neumann algebras, and superselection sectors.
Recall that an \emph{involutive poset} is a poset $\cP$ with an  order-reversing involution $p\mapsto p'$, i.e., $p=p''$ and $p\leq q$ implies $q'\leq p'$ for all $p,q\in \cP$.
We write
\[
p\Cap q:= \set{r\in \cP}{r\leq p\text{ and }r\leq q}.
\]
We say $p,q$ are \emph{disjoint} if $p\Cap q = \emptyset$.

\subsection{Geometric axioms for posets}
\label{sec:GeometricAxioms}
In order to later construct a monoidal product and braiding on the category of superselection sectors, we work with an involutive poset which satisfies some additional geometric axioms.
\begin{enumerate}[label=\textup{(GA\arabic*)}, series=geom]
\setcounter{enumi}{-1}
\item 
\label{geom:SelfDisjoint}
\underline{Disjoint complements:}
For every $p\in\cP$, $p\Cap p'=\emptyset$.

\item 
\label{geom:saturate}
\underline{Saturation:}
Every $p\in\cP$ is \emph{saturated}: for any $q\in\cP$, $p\Cap q\neq \emptyset$ or $p\Cap q'\neq \emptyset$ (possibly both).

\item 
\label{geom:splitting}
\underline{Splitting:}
Every $p \in \cP$ \emph{splits}: there are disjoint $r,s \leq p$.
\end{enumerate}
An important tool for the study of the superselection category in \cite{MR4927814} is the notion of a zig-zag. 
\begin{definition}
    A \emph{zig-zag} from $p$ to $q$ is a sequence of elements $(z_1,y_1,\dots , z_n, y_n, z_{n+1}) \subset \cP$ with $z_1=p$, $z_{n+1}=q$, and $z_j, z_{j+1}\leq y_j$ for all $j$. We will often write such a zig-zag as
    $$z_1\leq y_1 \geq z_2 \leq \cdots \geq z_n \leq y_n \geq z_{n+1}.$$
\end{definition}
\noindent As a consequence of \ref{geom:saturate} and \ref{geom:splitting}, $\cP$ is connected in the sense that there is a zig-zag between any two elements in $\cP$ \cite[Lemma~1.1]{MR4927814}.

\begin{definition}
    For $p, q \in \cP$, we say $p$ is $q$-\textit{small} if there exist $r, s \in \cP$ such that $p, q \leq r$ and $p, q' \leq s$. Call $p$ a $q$-\textit{indicator} if $p\leq q$ or $p\leq q'$.
\end{definition}

\noindent Another consequence of \ref{geom:saturate} and \ref{geom:splitting} is the following.
\begin{lemma} [\cite{MR4927814}, Lemma 1.2]
    The axioms \ref{geom:saturate} and \ref{geom:splitting} are equivalent to the axiom:
\begin{enumerate}[label=\textup{(GA1.5)}, series=geomconsequence]
\item
\label{geom:qSmallqIndicator}
\textup{\underline{Small indicators:}
For any $p,q\in\cP$, there is a $q$-small $q$-indicator $\widetilde{p}\leq p$.
}
\end{enumerate}
\end{lemma}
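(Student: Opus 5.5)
The plan is to prove both implications directly from the definitions, using the involution to turn upper bounds into lower bounds. The key observation is that for $x,y\in\cP$ there is an $s$ with $x\le s$ and $y\le s$ if and only if $x'\Cap y'\neq\emptyset$. Indeed, $x,y\le s$ is equivalent to $s'\le x'$ and $s'\le y'$, since the involution is order-reversing and $s''=s$. So $q$-smallness of $\widetilde p$ amounts to $\widetilde p{}'\Cap q'\neq\emptyset$ and $\widetilde p{}'\Cap q\neq\emptyset$. Throughout I will also use \ref{geom:SelfDisjoint}, which is part of the standing axioms.

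\emph{(\ref{geom:saturate}) and (\ref{geom:splitting}) imply (GA1.5).} Let $p,q\in\cP$. By saturation, $p\Cap q\neq\emptyset$ or $p\Cap q'\neq\emptyset$. Both $q$-smallness and being a $q$-indicator are unchanged under $q\leftrightarrow q'$, since $q''=q$. So it suffices to treat the case where some $t\le p$ satisfies $t\le q$.

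\begin{enumerate}
\item By \ref{geom:splitting}, split $t$ into disjoint $a,b\le t$.
\item Apply saturation of $b$ against $a$. Any element of $b\Cap a$ would make $a,b$ non-disjoint, so $b\Cap a=\emptyset$. Hence there is $c\le b$ with $c\le a'$.
\item Set $\widetilde p:=a$. Then $a\le t\le p$, and $a\le t\le q$, so $a$ is a $q$-indicator.
\item For smallness, take $r:=q$, which satisfies $a,q\le r$.
\item Take $s:=c'$. From $c\le a'$ we get $a\le c'$. From $c\le b\le t\le q$ we get $q'\le c'$. So $a,q'\le s$.
\end{enumerate}

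The only non-routine point here is finding the upper bound $s$ of $\widetilde p$ and $q'$. That is exactly why we split $t$ and run saturation a second time, against a disjoint piece.

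\emph{(GA1.5) implies (\ref{geom:saturate}) and (\ref{geom:splitting}).}

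\begin{enumerate}
\item For saturation, take $p,q\in\cP$ and the small indicator $\widetilde p\le p$. Being an indicator means $\widetilde p\le q$ or $\widetilde p\le q'$. Hence $\widetilde p\in p\Cap q$ or $\widetilde p\in p\Cap q'$.
\item For splitting, apply (GA1.5) with $q=p$ to get $\widetilde p\le p$ and $s$ with $\widetilde p,p'\le s$.
\item Then $s'\le p$ and $s'\le \widetilde p{}'$.
\item To see that $\widetilde p$ and $s'$ are disjoint, let $x\le\widetilde p$ and $x\le s'\le\widetilde p{}'$. Then $x\in\widetilde p\Cap\widetilde p{}'$, which is empty by \ref{geom:SelfDisjoint}, a contradiction.
\item So $\widetilde p$ and $s'$ are disjoint elements below $p$, which gives splitting.
\end{enumerate}

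This direction relies on \ref{geom:SelfDisjoint}, which I take as part of the standing assumptions. I expect the smallness construction in the first direction to be the main (mild) obstacle.
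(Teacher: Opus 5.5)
Your proof is correct in both directions. The reformulation of $q$-smallness as $\widetilde p'\Cap q\neq\emptyset\neq\widetilde p'\Cap q'$, the split-then-saturate step producing $c\in a'\Cap q$ so that $c'$ bounds $a$ and $q'$, and the disjointness of $\widetilde p$ and $s'$ all check out. The paper gives no proof of its own here; it only cites \cite[Lemma~1.2]{MR4927814}, and your argument is the natural direct one.

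You are right that (GA0) is genuinely needed, and only for the splitting direction. The one-point poset $\{p\}$ with $p'=p$ satisfies (GA1.5) but not (GA2). So the statement as written in this paper tacitly treats (GA0) as a standing axiom.
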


\noindent In addition to the previous geometric axioms, we ask for the existence of zig-zags between small indicators which alternate between small elements and indicators.

\begin{enumerate}[label=\textup{(GA\arabic*)}, series=geom]
\setcounter{enumi}{2}
\item
\label{geom:ZigZag}
\underline{Zig-zag:}
For every $p,q\in\cP$ and any two choices of $q$-small $q$-indicator $\widetilde{p},\widehat{p}\leq p$ as in \ref{geom:qSmallqIndicator},
there is a zig-zag between $\widetilde{p},\widehat{p}$ contained in $p$ such that each $z_j$ is a $q$-indicator and each $y_j$ is $q$-small (and hence each $z_j$ as well).
\end{enumerate}

\noindent As discussed in \cite{MR4927814}, these geometric axioms hold in many cases of interest, including intervals in $S^1$, cones in $\mathbb R^2$ and in $\mathbb R^3$, and disks in $S^2$. 
The authors of \cite{MR4927814} also prove that \ref{geom:SelfDisjoint}-\ref{geom:ZigZag} imply the existence of mutually disjoint zig-zags and reflections.

\begin{itemize}
\item 
    Given $p_1,p_2,q_1,q_2\in \cP$ with $p_1\leq q_1'$ and $p_2\leq q_2'$, a \emph{mutually disjoint zig-zag} $(p_1,q_1) \leftrightsquigarrow (p_2,q_2)$ is a pair of zig-zags
    \begin{align*}
        &(p_1=x_1,\textcolor{blue}{w_1}, \textcolor{red}{x_2}, \dots, x_n, \textcolor{blue}{w_n}, \textcolor{red}{x_{n+1}}=p_2)\\
        &(q_1=\textcolor{blue}{z_1}, \textcolor{red}{y_1}, z_2, \dots, \textcolor{blue}{z_n}, \textcolor{red}{y_n}, z_{n+1}=s_2)
    \end{align*}
    such that $\textcolor{blue}{w_i}$ is disjoint from $\textcolor{blue}{z_i}$ and $\textcolor{red}{y_i}$ is disjoint from $\textcolor{red}{x_{i+1}}$ for $i=1,\dots, n$.
\item 
A \emph{reflection} of a splitting $(r,s)$ of $p$ is a splitting $(a,b)$ of $p'$ with some $c \in \cP$ such that $a,r \leq c$ and $b,s \leq c'$.
\end{itemize}

\begin{enumerate}[label=\textup{(GA\arabic*)}, series=geom]
\setcounter{enumi}{3}
\item
\label{geom:disjointZigZag}
\underline{Mutually disjoint zig-zag:}
    For any two splittings $(r_1,s_1)$ and $(r_2,s_2)$ of $p$, there is a mutually disjoint zig-zag $(r_1,s_1) \leftrightsquigarrow(r_2,s_2)$ or $(r_1,s_1) \leftrightsquigarrow(s_2,r_2)$ contained in $p$ \cite[Theorem~4.19]{MR4927814}.
    \end{enumerate}

\begin{enumerate}[label=\textup{(GA\arabic*)}, series=geom]
\setcounter{enumi}{4}
\item 
\label{geom:reflection}
\underline{Reflection:} Every $p\in \cP$ admits a splitting which has a reflection \cite[Proposition~4.11]{MR4927814}.
\end{enumerate}

\subsection{Nets of von Neumann algebras and superselection sectors}

Having described the involutive posets we shall be working with, we now give the requisite background on $\cP$-nets of algebras.
In what follows, we assume all von Neumann algebras have separable preduals.

\begin{definition}
    Given an involutive poset $\cP$, a $\cP$-\emph{net of algebras} is a family of von Neumann algebras $\set{A_p\subseteq B(\cH)}{p \in \cP}$ satisfying:
    \begin{itemize}
        \item (isotone) $p\leq q$ implies $A_p\subseteq A_q$,
        \item (Haag duality) $A_{p'}=A_p'$ for all $p \in \cP$, and
        \item (absorbing) for each $p \in \cP$, $\cH$ is an absorbing representation of $A_p$.
    \end{itemize}
\end{definition}
Here, a representation $\cH$ of $A$ is \emph{absorbing} if $\cH \oplus \cK$ and $\cH$ are isomorphic representations of $A$ for any normal representation $\cK$ of $A$. Absorbing representations are unique up to unitary equivalence. 
As in \cite[Cor.~2.12]{MR4927814}, the absorbing property may be equivalently replaced with the axiom that each $A_p$ is properly infinite.

\begin{definition}
    A \emph{superselection sector} of a $\cP$-net of algebras is a family of normal representations $\pi_p\colon A_p \to B(\cK)$ such that
    \begin{itemize}
        \item (isotone) if $p\leq q$, then ${\left.{\pi_q}\right|_{A_p}}=\pi_p$,
        \item (local) if $p$ and $q$ are disjoint, then $[\pi_p(A_p),\pi_q(A_q)]=0$, and
        \item (absorbing) each representation $\pi_p$ is absorbing.
    \end{itemize}
\end{definition}
\noindent The absorbing property may be equivalently replaced by the assumption that each $\pi_p$ is injective. 
This follows from the fact that each $A_p$ is properly infinite \cite[Lemmas~2.11 and 2.18]{MR4927814}.

\begin{definition}
    For a $\cP$-net of algebras, the category $\SSS$ of superselection sectors is the category whose objects are superselection sectors and whose morphisms are bounded intertwiners, i.e., bounded linear operators $T \colon (\cK,\pi) \to (\cL, \sigma)$ such that $T\pi_p(x)=\sigma_p(x)T$ for all $x \in A_p$ and all $p \in \cP$.
\end{definition}
\noindent We will give an overview of the fusion and braiding constructed in \cite{MR4927814} in Section~\ref{sec:equivalencewithendomorphisms} below.

\subsection{Example: 2D braided quantum spin systems}
As an explicit example of a net of algebras, we sketch the construction from \cite[\S4]{2506.19969} of a \emph{2D braided quantum spin system}
from a unitary braided fusion category $\cB$.
The involutive poset will be the poset of cones in $\bbR^2$.

\subsubsection*{The net of algebras}
Fix an object $X \in \cB$ that contains $1_{\cB}$ as a subobject, together with an isometry $i_X\colon 1_{\cB} \to X$.
We assign a copy of $X$ to each vertex of the $\mathbb Z^2$ lattice in $\mathbb R^2$. 
To each bounded `disk-like region' \cite[Def.~2.4]{2605.10693} $R\subset \mathbb R^2$, there is an associated object $X^{\otimes R}\in \cB$ given by taking the monoidal product of the copies of $X$ associated to the vertices inside $R$.\footnote{The braiding is used in an essential way to prove that $X^{\otimes R}$ is well-defined.
For an approach using operads, we refer the reader to \cite{MR1989873}.
}
To each disk-like region $R$, we assign the finite-dimensional C$^*$-algebra 
$$\mathfrak A(R)\coloneq \End_{\cB}(X^{\otimes R}).$$
If $R \subset S$ is an inclusion of bounded disk-like regions, we obtain an inclusion of finite-dimensional C$^*$-algebras $\mathfrak A(R)\hookrightarrow \mathfrak A(S)$ by tensoring each endomorphism with $1_X^{\otimes S\setminus R}$ as in \eqref{eq:InclusionOfDiskLikeRegions} in the introduction. 

We thus get a net of finite dimensional $\rmC^*$-algebras associated to the disk-like regions in $\bbR^2$.
For an infinite cone $\Lambda$, we obtain an AF $\rmC^*$-algebra $\fA(\Lambda)$ by taking the inductive limit over disk-like regions inside $\Lambda$.\footnote{Here, there is a subtle technicality about the poset of cones in $\bbR^2$ in that we want that the boundary $\partial[\Lambda]$ to be homeomorphic to $\bbR$.
We thus work with a modified poset of truncated cones as in \cite[\S2.1]{2605.10693}.
}

\subsubsection*{Superselection sectors}
To each disk-like region $R$, we define a (finite-dimensional) Hilbert space for each simple object $x \in \cB$
$$
\cK_x(R)\coloneq \cB(x, X^{\otimes R})
$$
with inner product $\langle f | g\rangle_{\cK_x}\coloneq \tr_{\cB}(f^\dagger \circ g).$ 
If $R \subset S$ is an inclusion of bounded disk-like regions, we obtain an isometric inclusion $\cK_x(R) \hookrightarrow \cK_x(S)$ by tensoring each morphism with $i_X^{\otimes S\setminus R}$. 
\begin{equation}
\label{eq:InclusionOfDiskLikeRegionsHilbertSpace}
\tikzmath{
\foreach \y in {0,.5,...,2.5}{
\foreach \x in {0,.5,...,2.5}{
\filldraw (\x,\y) circle (.05cm);
}}
\draw[thick, blue] (.25,.25) -- (.75,.25) -- (.75,.75) -- (1.75,.75) -- (1.75,.25) -- (2.25,.25) -- (2.25,2.25) -- (1.75,2.25) -- (1.75,1.75) -- (.75,1.75) -- (.75,2.25) -- (.25,2.25) -- (.25,.25);
\node[blue] at (1.25,1.25) {$R$};
\draw[thick, red] (-.25,-.25) rectangle (2.75,2.75);
\node[red] at (1.25,.25) {$S$};
}
\qquad\qquad\rightsquigarrow\qquad\qquad
\tikzmath{
\draw[snake, cyan, thick] (1.25,1.25) -- (3,-.25) node[right]{$\scriptstyle x$};
\foreach \x in {.5,1,1.5,2}{
\draw[thick,red] ($ (\x,2.5) + (-.15,.35)$) -- (\x,2.5);
\filldraw[red] (\x,2.5) circle (.05cm);
}
\foreach \x in {1,1.5}{
\draw[thick,red] ($ (\x,2) + (-.15,.35)$) -- (\x,2);
\filldraw[red] (\x,2) circle (.05cm);
}
\foreach \x in {0,2.5}{
\foreach \y in {0,.5,...,2.5}{
\draw[thick,red,knot] ($ (\x,\y) + (-.15,.35)$) -- (\x,\y);
\filldraw[red] (\x,\y) circle (.05cm);
}}
\fill[fill=gray!60, opacity=.8] (.25,.25) -- (.75,.25) -- (.75,.75) -- (1.75,.75) -- (1.75,.25) -- (2.25,.25) -- (2.25,2.25) -- (1.75,2.25) -- (1.75,1.75) -- (.75,1.75) -- (.75,2.25) -- (.25,2.25) -- (.25,.25);
\draw[thick, blue] (.25,.25) -- (.75,.25) -- (.75,.75) -- (1.75,.75) -- (1.75,.25) -- (2.25,.25) -- (2.25,2.25) -- (1.75,2.25) -- (1.75,1.75) -- (.75,1.75) -- (.75,2.25) -- (.25,2.25) -- (.25,.25);
\foreach \y in {.5,1,1.5,2}{
\foreach \x in {.5,2}{
\draw[thick,blue] (\x,\y) -- ($ (\x,\y) + (-.15,.35)$);
}}
\foreach \y in {1,1.5}{
\foreach \x in {1,1.5}{
\draw[thick,blue] (\x,\y) -- ($ (\x,\y) + (-.15,.35)$);
}}
\node[blue] at (1.25,1.25) {$g$};
\foreach \x in {.5,1,1.5,2}{
\draw[thick,red,knot] ($ (\x,0) + (-.15,.35)$) -- (\x,0);
\filldraw[red] (\x,0) circle (.05cm);
}
\foreach \x in {1,1.5}{
\draw[thick,red,knot] ($ (\x,.5) + (-.15,.35)$) -- (\x,.5);
\filldraw[red] (\x,.5) circle (.05cm);
}
\node at (1.25,-.65) {$g\in \cK_x(R)\subset \cK_x(S)$};
}
\end{equation}
By taking the inductive limit over an increasing sequence of disk-like regions that fill the plane, we obtain the total Hilbert space $\cK_x\coloneq \varinjlim \cK_x(R)$. 
There is also an obvious action of $
\fA(\Lambda)$ on each $\cK_x$ induced by post-composition on finite regions.

Next, for each bounded disk-like region $R$ there is a state $\psi_R$ on $\mathfrak A(R)$ given by $\psi_R(f)\coloneq (i_X^{\otimes R})^\dagger \circ f \circ (i_X^{\otimes R})$. 
This is compatible with inclusions of regions, and so defines a state on the entire quasi-local algebra $\fA$. 
\begin{equation}
\label{eq:InclusionOfDiskLikeRegionsForState}
\tikzmath{
\foreach \y in {0,.5,...,2.5}{
\foreach \x in {0,.5,...,2.5}{
\filldraw (\x,\y) circle (.05cm);
}}
\draw[thick, blue] (.25,.25) -- (.75,.25) -- (.75,.75) -- (1.75,.75) -- (1.75,.25) -- (2.25,.25) -- (2.25,2.25) -- (1.75,2.25) -- (1.75,1.75) -- (.75,1.75) -- (.75,2.25) -- (.25,2.25) -- (.25,.25);
\node[blue] at (1.25,1.25) {$R$};
\draw[thick, red] (-.25,-.25) rectangle (2.75,2.75);
\node[red] at (1.25,.25) {$S$};
}
\qquad\qquad\rightsquigarrow\qquad\qquad
\tikzmath{
\foreach \x in {.5,1,1.5,2}{
\draw[thick,red,{Circle[scale=0.7]}-{Circle[scale=0.7]}] ($ (\x,2.5) + (-.15,.35)$) -- ($ (\x,2.5) + (.15,-.35)$);
}
\foreach \x in {1,1.5}{
\draw[thick,red,{Circle[scale=0.7]}-{Circle[scale=0.7]}] ($ (\x,2) + (-.15,.35)$) -- ($ (\x,2) + (.15,-.35)$);
}
\foreach \x in {0,2.5}{
\foreach \y in {0,.5,...,2.5}{
\draw[thick,red,{Circle[scale=0.7]}-{Circle[scale=0.7]}] ($ (\x,\y) + (-.15,.35)$) -- ($ (\x,\y) + (.15,-.35)$);
}}
\foreach \y in {.5,1,1.5,2}{
\foreach \x in {.5,2}{
\draw[thick,blue,-{Circle[scale=0.7]}] (\x,\y) -- ($ (\x,\y) + (.15,-.35)$);
}}
\foreach \y in {1,1.5}{
\foreach \x in {1,1.5}{
\draw[thick,blue,-{Circle[scale=0.7]}] (\x,\y) -- ($ (\x,\y) + (.15,-.35)$);
}}
\fill[fill=gray!60, opacity=.8] (.25,.25) -- (.75,.25) -- (.75,.75) -- (1.75,.75) -- (1.75,.25) -- (2.25,.25) -- (2.25,2.25) -- (1.75,2.25) -- (1.75,1.75) -- (.75,1.75) -- (.75,2.25) -- (.25,2.25) -- (.25,.25);
\draw[thick, blue] (.25,.25) -- (.75,.25) -- (.75,.75) -- (1.75,.75) -- (1.75,.25) -- (2.25,.25) -- (2.25,2.25) -- (1.75,2.25) -- (1.75,1.75) -- (.75,1.75) -- (.75,2.25) -- (.25,2.25) -- (.25,.25);
\foreach \y in {.5,1,1.5,2}{
\foreach \x in {.5,2}{
\draw[thick,blue,-{Circle[scale=0.7]}] (\x,\y) -- ($ (\x,\y) + (-.15,.35)$);
}}
\foreach \y in {1,1.5}{
\foreach \x in {1,1.5}{
\draw[thick,blue,-{Circle[scale=0.7]}] (\x,\y) -- ($ (\x,\y) + (-.15,.35)$);
}}
\node[blue] at (1.25,1.25) {$f$};
\foreach \x in {.5,1,1.5,2}{
\filldraw[knot,white] ($ (\x,0) + (-.15,.35)$) circle (.05cm);
\draw[thick,red,knot,-{Circle[scale=0.75]}] ($ (\x,0) + (-.15,.35)$) -- ($ (\x,0) + (.15,-.35)$);
\filldraw[red] ($ (\x,0) + (-.15,.35)$) circle (.05cm);
}
\foreach \x in {1,1.5}{
\filldraw[knot,white] ($ (\x,.5) + (-.15,.35)$) circle (.05cm);
\draw[thick,red,knot,-{Circle[scale=0.75]}] ($ (\x,.5) + (-.15,.35)$) -- ($ (\x,.5) + (.15,-.35)$);
\filldraw[red] ($ (\x,.5) + (-.15,.35)$) circle (.05cm);
}
\node at (1.25,-.65) {$\psi_R(f)= \psi_S(f)$};
}
\end{equation}
One then shows that $L^2(\fA, \psi)\cong \cK_{1_{\cB}}$. 
For each cone $\Lambda$, the corresponding cone von Neumann algebra is defined to be $A_\Lambda\coloneq \pi_\psi(\fA(\Lambda))''$, where $\pi_\psi$ is the GNS construction associated to $\psi$. The family $\cA\coloneq (A_\Lambda)$ is a net of algebras in $B(\cK_{1_{\cB}})$ associated to the poset of (modified) cones:
\begin{itemize}
    \item (isotone) The net clearly satisfies isotony,
    \item (Haag duality) This is \cite[Theorem~4.7]{2506.19969}, and
    \item (absorbing) Also by \cite[Theorem~4.7]{2506.19969}, each $A_\Lambda$ is a direct sum of type I$_\infty$ factors, and is therefore properly infinite.
\end{itemize}
By construction, each Hilbert space $\cK_x$ is a superselection sector for $\cA$ induced by post-composition on finite regions.
\par Now, if $\langle X \rangle$ is the full braided fusion subcategory of $\cB$ generated by $X$, each object of $\mathsf{Hilb}\langle X \rangle$ may be expressed as 
$$\bigoplus_{x \in \text{Irr}\langle X \rangle} H_x \otimes x$$
for some (separable) Hilbert space $H_x$. It is proven in \cite[Theorem~4.8]{2506.19969} that the functor
\begin{equation}
\label{eq:EquivalenceForBraidedNet}
\mathsf{Hilb}\langle X \rangle^{\text{op}} \ni\bigoplus _{x \in \text{Irr}\langle X \rangle} H_x \otimes x \mapsto \bigoplus_{x \in \text{Irr}\langle X \rangle}H_x \otimes \cK_x\in \SSS
\end{equation}
is a unitary equivalence of W$^*$-categories (where each $H_x$ acts as a multiplicity space).

\section{Superselection Sectors via Connes Fusion}\label{sec:connesfusion}

In this section, we construct the Connes fusion of two superselection sectors. The construction of the underlying Hilbert space is analogous to \cite{MR4239831}, but since we are not necessarily working with conformal nets, we use a different approach to defining the representations of each algebra on the fusion of Hilbert spaces.
\subsection{Monoidal product via Connes fusion}

\begin{definition}[Connes fusion]\label{def:Connes-fusion}
    Let $(K,\pi),(L,\sigma)\in \SSS$ and let $p,q\in{\cP}$ be disjoint. Define the \emph{Connes fusion of $K$ and $L$ over $p,q$}, denoted $K(p)\boxtimes L(q)$, to be the completion of $\Hom_{A_{p'}}(H,K)\otimes H\otimes \Hom_{A_{q'}}(H,L)$, with respect to the positive semi-definite sesquilinear form
    \[
    \langle f_1\otimes\xi_1\otimes g_1|f_2\otimes\xi_2\otimes g_2\rangle=\langle g_2^*g_1f_2^*f_1\xi_1|\xi_2\rangle=\langle f_2^*f_1g_2^*g_1\xi_1|\xi_2\rangle.
    \]
    Define the \emph{Connes fusion of $K$ and $L$ over $p$ on the left}, denoted $K(p)\boxtimes L$, to be the completion of $\Hom_{A_{p'}}(H,K)\otimes_{A_p} L$ with respect to the positive semi-definite sesquilinear form
    \[
    \langle f_1\otimes\eta_1|f_2\otimes\eta_2\rangle=\langle \sigma_p(f_2^*f_1)\eta_1|\eta_2\rangle.
    \]
    Define the \emph{Connes fusion of $K$ and $L$ over $q$ on the right}, denoted $K\boxtimes L(q)$, to be the completion of $K\otimes_{A_q}\Hom_{A_{q'}}(H,L)$ with respect to the positive semi-definite sesquilinear form
    \[
    \langle \eta_1\otimes g_1|\eta_2\otimes g_2\rangle=\langle \pi_q(g_2^*g_1)\eta_1|\eta_2\rangle.
    \]
\end{definition}

The next lemma gives examples of dense subsets in the Connes fusion, which will be useful for constructing maps out of the Connes fusion.
\begin{lemma}\label{lemma:dense-subsets-of-Connes-fusion} Let $X\subset \Hom_{A_{p'}}(H,K)$ and $Y\subset L$ be subspaces. If every element of $\Hom_{A_{p'}}(H,K)$ is the limit of a norm-bounded net in $X$ under the strong operator topology, and $Y$ is norm dense, then $X\otimes Y$ is norm dense in $K(p)\boxtimes L$. Similar statements hold for the right and two-sided Connes fusions.
\end{lemma}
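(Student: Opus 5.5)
The plan is to show that the closed span $\overline{X\otimes Y}$ inside $K(p)\boxtimes L$ contains every elementary tensor $f\otimes\eta$ with $f\in\Hom_{A_{p'}}(H,K)$ and $\eta\in L$. Since such tensors span a dense subspace by construction of the completion, this gives density of $X\otimes Y$.

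The first step is a norm estimate for elementary tensors. By the definition of the form,
\[
\|f\otimes\eta\|^2=\langle \sigma_p(f^*f)\eta|\eta\rangle=\|\sigma_p(f^*f)^{1/2}\eta\|^2\le \|f\|^2\|\eta\|^2 .
\]
This uses that $f^*f\in \End_{A_{p'}}(H)=A_{p'}'=A_p$ by Haag duality, so $\sigma_p(f^*f)$ makes sense. Hence $(f,\eta)\mapsto f\otimes\eta$ is jointly norm-continuous and bilinear. Now fix $f$ and $\eta$. Choosing $\eta_k\in Y$ with $\eta_k\to\eta$ in norm gives $f\otimes\eta_k\to f\otimes\eta$. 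It therefore suffices to show that $f\otimes\eta\in\overline{X\otimes L}$ for every $\eta\in L$, and that each $g\otimes \eta$ with $g\in X$ lies in $\overline{X\otimes Y}$. The second claim is immediate from the same estimate.

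The main step, and the only real obstacle, is approximating in the first variable, since only strong convergence is available there. Take a norm-bounded net $f_\lambda\in X$ with $f_\lambda\to f$ strongly. Expanding the norm gives
\[
\|(f-f_\lambda)\otimes\eta\|^2=\langle\sigma_p\big((f-f_\lambda)^*(f-f_\lambda)\big)\eta|\eta\rangle .
\]
Set $h_\lambda:=(f-f_\lambda)^*(f-f_\lambda)\in A_p$. I will argue that $h_\lambda\to 0$ weakly in $B(H)$: for $\xi,\zeta\in H$ we have $\langle h_\lambda\xi|\zeta\rangle=\langle (f-f_\lambda)\xi|(f-f_\lambda)\zeta\rangle$, and this tends to $0$ by Cauchy--Schwarz because $(f-f_\lambda)\xi\to0$ in norm. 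The net $h_\lambda$ is also uniformly bounded, so $h_\lambda\to0$ $\sigma$-weakly on $A_p$. Since $\sigma_p$ is normal, it is $\sigma$-weakly continuous, so $\langle\sigma_p(h_\lambda)\eta|\eta\rangle\to0$. Thus $f_\lambda\otimes\eta\to f\otimes\eta$ in norm. Boundedness of the net is exactly what upgrades weak convergence to $\sigma$-weak convergence, and this is where the hypothesis is used.

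For the right and two-sided fusions the same argument applies with the roles exchanged. On the right one uses $\pi_q(g^*g)$ and normality of $\pi_q$. For the two-sided fusion one approximates the three tensor factors one at a time, using the estimate $\|f\otimes\xi\otimes g\|\le\|f\|\|\xi\|\|g\|$, which follows from the form. For the strong approximation in one hom-factor one uses that $f^*f\in A_p$ and $g^*g\in A_q$ commute (since $p,q$ are disjoint, $A_q\subset A_{p'}$) and applies the bounded-weak-convergence argument above to the relevant factor.
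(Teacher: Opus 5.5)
Your proof is correct and follows the paper's argument: a norm-bounded, strongly convergent net in the hom-factor forces $(f-f_\lambda)^*(f-f_\lambda)\to 0$ $\sigma$-weakly in $A_p$, normality of $\sigma_p$ turns this into norm convergence of the elementary tensors, and the vector factor is handled by the estimate $\|f\otimes\eta\|\le\|f\|\,\|\eta\|$. Your write-up is more explicit than the paper's about how the two approximations combine, and in the two-sided case the commutation of $f^*f$ and $g^*g$ that you invoke is already built into the paper's definition of the form, while your convergence step there only needs weak convergence, since $\langle g^*g(f-f_\lambda)^*(f-f_\lambda)\xi|\xi\rangle=\langle (f-f_\lambda)^*(f-f_\lambda)\xi|g^*g\xi\rangle\to 0$.
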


\begin{proof}
    Let $\eta\in L$ and suppose $(f_n)\subset X$ is a norm bounded net that converges to $0$ strongly. Then $(f_n^*f_n)^{1/2}\to 0$ strongly. By norm-boundedness, $f_n^*f_n\to 0$ strongly as well as ultrastrongly, hence ultraweakly too. So, $\sigma_p(f_n^*f_n)\to0$ ultraweakly, hence weakly. Thus, $f_n\otimes\eta\to0$, so the map $-\otimes\eta:\Hom_{A_{p'}}(H,K)\to K(p)\boxtimes L$ sends norm-bounded strongly convergent nets to norm convergent nets. Moreover, for all $f\in \Hom_{A_{p'}}(H,K)$, the map $f\otimes-:L\to K(p)\boxtimes L$ is continuous, so the result follows.
\end{proof}

The following lemma proves that both one-sided Connes fusion products are isomorphic as Hilbert spaces to the two-sided Connes fusion 

\begin{lemma}\label{lemma:canonical-evaluation-is-unitary}
    Suppose $p,q\in{\cP}$ are disjoint. Then the canonical evaluation maps
    \begin{align*}
        \cR _{p,q}:K(p)\boxtimes L(q)&\to K(p)\boxtimes L 
        & &\text{and}& 
        \cL_{p,q}:K(p)\boxtimes L(q) &\to K\boxtimes L(q)
        \\
        f\otimes\xi\otimes g &\mapsto f\otimes g(\xi)
        &&&
        f\otimes\xi\otimes g&\mapsto f(\xi)\otimes g 
    \end{align*}
    are unitaries, where we extend $\cR_{p, q}$ and $\cL_{p, q}$ continuously to their whole domains.
\end{lemma}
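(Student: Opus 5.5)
We prove the claim for $\cR_{p,q}$; the argument for $\cL_{p,q}$ is symmetric, interchanging the roles of $p$ and $q$ and of left and right. There are three steps: show $\cR_{p,q}$ is isometric on the algebraic tensor product, so it extends to an isometry; show its range is dense; conclude it is unitary.

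\textbf{Isometry.} Take $f_i\in\Hom_{A_{p'}}(H,K)$, $\xi_i\in H$ and $g_i\in\Hom_{A_{q'}}(H,L)$. The inner product in $K(p)\boxtimes L$ is
\[
\langle f_1\otimes g_1\xi_1|f_2\otimes g_2\xi_2\rangle=\langle \sigma_p(f_2^*f_1)g_1\xi_1|g_2\xi_2\rangle .
\]
The first thing to record is that $f_2^*f_1$ commutes with $A_{p'}$. By Haag duality it therefore lies in $A_p$, so $\sigma_p(f_2^*f_1)$ makes sense. Since $p,q$ are disjoint, $p\leq q'$ is the natural expectation; I would either assume it or reduce to it via isotony and Haag duality (from $p\Cap q=\emptyset$ one wants $A_p\subset A_q'=A_{q'}$). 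Granting $A_p\subseteq A_{q'}$, the intertwining property of $g_1$ gives $\sigma_p(f_2^*f_1)g_1=g_1f_2^*f_1$. Hence the inner product equals $\langle g_2^*g_1f_2^*f_1\xi_1|\xi_2\rangle$, which is exactly the form defining $K(p)\boxtimes L(q)$.

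This identity extends by sesquilinearity to finite sums. So $\cR_{p,q}$ preserves the semi-definite forms, descends to the quotient by null vectors, and extends to an isometry on the completion.

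\textbf{Dense range.} The image contains $f\otimes \eta$ for every $\eta$ in the span $Y$ of $\{g\xi: g\in\Hom_{A_{q'}}(H,L),\ \xi\in H\}$. By Lemma~\ref{lemma:dense-subsets-of-Connes-fusion}, taking $X=\Hom_{A_{p'}}(H,K)$, it suffices to show $Y$ is norm dense in $L$.

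For this I use the absorbing axiom. Since $L$ is absorbing for $A_{q'}$, the representation $\sigma_{q'}$ on $L$ and the vacuum $H$ are unitarily equivalent as $A_{q'}$-modules, with $L\oplus H\cong H$. So there is a unitary $u\in\Hom_{A_{q'}}(H,L)$, and then $L=uH$ already lies in $Y$. Alternatively, $L$ is a summand of $H$ as an $A_{q'}$-module, and the ranges of intertwining isometries cover $L$. Either way $Y=L$.

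\textbf{Conclusion and main obstacle.} An isometry with dense range is unitary. The main obstacle is the first step: justifying that $f_2^*f_1\in A_p$ acts on $L$ commuting with $g_1$. This needs Haag duality together with the locality relation $A_p\subseteq A_{q'}$ coming from disjointness. I would also check that the paper's two expressions for the two-sided form agree, which is the same commutation argument applied in $K$.
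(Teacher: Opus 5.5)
Your proposal is correct and follows the paper's proof. The same inner-product computation shows $\cR_{p,q}$ (and, symmetrically, $\cL_{p,q}$) is isometric, and surjectivity comes from a unitary in $\Hom_{A_{q'}}(H,L)$ supplied by the absorbing property. The paper likewise tacitly reads ``disjoint'' as $p\leq q'$, which its two expressions for the two-sided form already need; take that order relation as your hypothesis rather than only $A_p\subseteq A_{q'}$, since it is isotony for $p\leq q'$ that gives $\sigma_{q'}|_{A_p}=\sigma_p$ and hence $\sigma_p(f_2^*f_1)g_1=g_1f_2^*f_1$.
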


\begin{proof}
    We have that
    \[
    \langle f_1\otimes g_1(\xi_1)|f_2\otimes g_2(\xi_2)\rangle=\langle \sigma_p(f_2^*f_1)g_1(\xi_1)|g_2(\xi_2)\rangle=\langle f_2^*f_1g_2g_1(\xi_1)|\xi_2\rangle=\langle f_1\otimes \xi_1\otimes g_1|f_2\otimes\xi_2\otimes g_2\rangle,
    \]
    so $\cR_{p,q}$ is isometric. Moreover, $\cR_{p,q}$ is surjective, since there exist unitaries in $\Hom_{A_{q'}}(H,L)$ by the absorbing property. Similarly, $\cL_{p,q}$ is unitary.
\end{proof}

We define maps between the Connes fusion spaces using zig-zags in $\cP$ by first understanding how the different Hilbert spaces relate to each other under inclusions in $\cP$.

\begin{lemma}\label{lemma:inclusions-induce-unitaries}
    The inclusion maps for one-sided and two-sided Connes fusion induce unitary maps.
    
\end{lemma}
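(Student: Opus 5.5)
We first make the statement precise. If $p\le \tilde p$, then $A_{\tilde p'}\subseteq A_{p'}$, so $\Hom_{A_{p'}}(H,K)\subseteq \Hom_{A_{\tilde p'}}(H,K)$. Similarly for $q\le\tilde q$. Suppose $\tilde p,\tilde q$ are still disjoint in the two-sided case. Then the algebraic inclusions
\[
\Hom_{A_{p'}}(H,K)\otimes L\hookrightarrow \Hom_{A_{\tilde p'}}(H,K)\otimes L,
\qquad
f\otimes\eta\mapsto f\otimes\eta,
\]
and the analogous ones for $K\boxtimes L(q)$ and $K(p)\boxtimes L(q)$, should descend to unitaries $\iota:K(p)\boxtimes L\to K(\tilde p)\boxtimes L$ and so on.

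The plan has three steps: isometry, surjectivity, and transfer to the other versions.

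\textbf{Isometry.} The sesquilinear forms are given by the same formula $\langle \sigma(f_2^*f_1)\eta_1|\eta_2\rangle$. For $f_1,f_2\in\Hom_{A_{p'}}(H,K)$ we have $f_2^*f_1\in A_{p'}'=A_p\subseteq A_{\tilde p}$ by Haag duality. Isotony of $\sigma$ gives $\sigma_{\tilde p}(f_2^*f_1)=\sigma_p(f_2^*f_1)$. Hence the inclusion preserves the forms, passes to the quotient by null vectors, and extends to an isometry on completions. The two-sided form behaves the same way, since it only involves $f_2^*f_1\in A_p$ and $g_2^*g_1\in A_q$.

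\textbf{Surjectivity.} This is the only substantive point. By the absorbing property of $K$ as an $A_{p'}$-module, there is a unitary $u\in\Hom_{A_{p'}}(H,K)$; the absorbing property of a superselection sector (and of $H$) gives $H\cong K$ as $A_{p'}$-modules. Let $f\in \Hom_{A_{\tilde p'}}(H,K)$ and $\eta\in L$. Write $f=u(u^*f)$, where $u^*f\in \End_{A_{\tilde p'}}(H)=A_{\tilde p}$ by Haag duality. In the quotient defining the relative tensor product over $A_{\tilde p}$, we can move $u^*f$ across:
\[
f\otimes\eta = u\cdot(u^*f)\otimes\eta = u\otimes \sigma_{\tilde p}(u^*f)\eta .
\]
To justify this equality, I will compute the norm of the difference directly from the form; it vanishes because $\sigma_{\tilde p}$ is a $*$-homomorphism. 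The right-hand side lies in the image of $\iota$, because $u\in\Hom_{A_{p'}}(H,K)$. So the range of $\iota$ contains a spanning set of the target, and being an isometry it is onto.

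\textbf{Other versions.} The right-sided case is symmetric, using a unitary in $\Hom_{A_{q'}}(H,L)$. For the two-sided case, I intend to avoid a separate computation. The inclusion map commutes with $\cR_{p,q}$ and $\cR_{\tilde p,\tilde q}$ from Lemma~\ref{lemma:canonical-evaluation-is-unitary}, where we first enlarge $q$ and then $p$, or compose inclusions. The left-sided inclusion is unitary, and the enlargement in the $q$-slot does not change $K(p)\boxtimes L$ at all. Hence the two-sided inclusion is unitary, being a composite of unitaries.

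I expect the main care to be needed in bookkeeping which algebra the relative tensor product is taken over, once $p$ is enlarged. The definition quotients by null vectors of the form, so the identification $u\,a\otimes\eta=u\otimes\sigma(a)\eta$ must be verified at the level of the form rather than assumed.
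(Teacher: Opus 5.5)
Your proof is correct, but it runs in the opposite order from the paper's.

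\textbf{The paper's route.} It does no new computation. For $p_1\le p_2$ it places all the inclusion maps in one commutative diagram together with the evaluation maps of Lemma~\ref{lemma:canonical-evaluation-is-unitary}. It then uses two observations:
\begin{itemize}
\item Enlarging the $L$-slot of a two-sided fusion is invisible after applying $\cR$, so $\cR_{p_1,p_1'}\circ\iota=\cR_{p_1,p_2'}$.
\item Enlarging the $K$-slot is invisible after applying $\cL$, so $\cL_{p_2,p_2'}\circ\iota=\cL_{p_1,p_2'}$.
\end{itemize}
Since the evaluation maps are unitary, the two-sided inclusions are unitary. The one-sided inclusions then follow from the commuting squares. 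So the paper goes from two-sided to one-sided, and the absorbing property enters only through the surjectivity of $\cR$ and $\cL$.

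\textbf{Your route.} You prove the one-sided case directly. Form preservation comes from Haag duality and isotony, and surjectivity from $f\otimes\eta=u\otimes\sigma_{\tilde p}(u^*f)\eta$. You then transport to the two-sided case using only $\cR$.

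\textbf{What each buys.} Your route makes well-definedness of the inclusion explicit. It also gives a concrete picture: for any unitary $u\in\Hom_{A_{p'}}(H,K)$, the map $\eta\mapsto u\otimes\eta$ is a unitary $L\to K(p)\boxtimes L$, and under these identifications the inclusion into $K(\tilde p)\boxtimes L$ is the identity of $L$. It also handles arbitrary disjoint pairs $(p,q)\le(\tilde p,\tilde q)$, not just the particular pairs in the paper's diagram. The paper's route is shorter and purely formal once Lemma~\ref{lemma:canonical-evaluation-is-unitary} is available.

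You already used the $\cR$ half of the paper's trick when you noted that enlarging the $q$-slot does not change $K(p)\boxtimes L$. The mirror observation with $\cL$ would have let you skip your direct surjectivity computation entirely.
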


\begin{proof}
    Let $p_1,p_2\in{\cP}$ with $p_1\leq p_2$. The diagram below commutes, and the canonical evaluation maps are unitaries by Lemma \ref{lemma:canonical-evaluation-is-unitary}, so it follows that all of the inclusion maps depicted are unitaries. 
    \begin{center}
    \begin{tikzcd}[column sep=0.6em, row sep=0.6em]
	& {K(p_1)\boxtimes L} && {K(p_2)\boxtimes L} & \\
	{} \\
	{K(p_1)\boxtimes L(p_1')} && {K(p_1)\boxtimes L(p_2')} && {K(p_2)\boxtimes L(p_2')} \\
	\\
	& {K\boxtimes L(p_1')} && {K\boxtimes L(p_2')}
	\arrow[from=1-2, to=1-4]
	\arrow["{{{\cR_{p_1,p_1'}}}}", from=3-1, to=1-2]
	\arrow["{{{\cL_{p_1,p_1'}}}}"', from=3-1, to=5-2]
	\arrow["{{{\cR_{p_1,p_2'}}}}"', from=3-3, to=1-2]
	\arrow[from=3-3, to=3-1]
	\arrow[from=3-3, to=3-5]
	\arrow["{{{\cL_{p_1,p_2'}}}}"', from=3-3, to=5-4]
	\arrow["{{{\cR_{p_2,p_2'}}}}"', from=3-5, to=1-4]
	\arrow["{{{\cL_{p_2,p_2'}}}}", from=3-5, to=5-4]
	\arrow[from=5-4, to=5-2]
\end{tikzcd}
\end{center}
\end{proof}

\begin{definition}\label{def:zigzagmaps}
    Suppose $p,q\in{\cP}$ and $\alpha$ is a zig-zag from $p$ to $q$. By Lemma \ref{lemma:inclusions-induce-unitaries}, $\alpha$ induces a unitary map $\alpha_*:K(p)\boxtimes L\to K(q)\boxtimes L$. By an abuse of notation, we will also use $\alpha_*$ to label the induced map $K\boxtimes L(p)\to K\boxtimes L(q)$.
\end{definition}

\begin{remark}\label{remark:natural-actions}
    For $(K,\pi),(L,\sigma)\in\SSS$ and disjoint $p,q\in{\cP}$, there are natural (left) actions of $A_p$ and $A_q$ on $K(p)\boxtimes L(q)$ given as follows: for $x\in A_p$, $y\in A_q$, and $f\otimes\xi\otimes g\in K(p)\boxtimes L(q)$,
    \[
    x\cdot(f\otimes\xi\otimes g)=\pi_p(x)f\otimes\xi\otimes g,\;\;\;\;y\cdot(f\otimes\xi\otimes g)=f\otimes\xi\otimes \sigma_q(y)g.
    \]
    Moreover, there are natural actions of $x\in A_{p}$ and $y\in A_{p'}$ on $f\otimes\eta\in K(p)\boxtimes L$ given by
    \[
    x\cdot(f\otimes\eta)=\pi_p(x)f\otimes\eta,\;\;\;\; y\cdot(f\otimes\eta)=f\otimes\sigma_{p'}(y)\eta,
    \]
    and natural actions of $x\in A_{q'}$ and $y\in A_{q}$ on $\eta\otimes g\in K\boxtimes L(q)$ given by
    \[
    x\cdot(\eta\otimes g)=\pi_{q'}(x)\eta\otimes g,\;\;\;\; y\cdot(\eta\otimes g)=\eta\otimes\sigma_{q}(y)g.
    \]
    In addition, these actions restrict to actions of $A_r$ for any $r\leq p$ or $q$. 
\end{remark}
One can also define right actions of $A_p$ and $A_q$, which satisfy the relations in the fact below.

\begin{fact}[Relative tensor product relations in Connes fusion]\label{fact:relative-tensor-relations}
    For any $f\otimes\xi\otimes g\in K(p)\boxtimes L(q)$, $x\in A_p$, and $y\in A_q$,
    \[
    fx\otimes \xi\otimes g=f\otimes x\xi\otimes g
    \qquad\qquad
    \text{and}
    \qquad\qquad
    f\otimes\xi\otimes gy=f\otimes y\xi\otimes g.
    \]
    For any $f\otimes\eta\in K(p)\boxtimes L$ and $x\in A_p$,
    \[
    fx\otimes\eta=f\otimes \sigma_p(x)\eta.
    \]
    For any $\eta\otimes g\in K\boxtimes L(q)$ and $y\in A_q$,
    \[
    \eta\otimes gy=\pi_q(y)\eta\otimes g.
    \]
\end{fact}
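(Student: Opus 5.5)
The plan is to prove each identity by showing that the difference of the two sides is a null vector for the defining sesquilinear form. Since each Connes fusion space is the completion of the algebraic tensor product modulo null vectors, it suffices to check one thing: the difference has zero inner product with every elementary tensor. Elementary tensors span a dense subspace, so a vector orthogonal to all of them vanishes in the completion.

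First, a well-definedness check. For $f\in\Hom_{A_{p'}}(H,K)$ and $x\in A_p$, we need $fx\in\Hom_{A_{p'}}(H,K)$. By Haag duality, $x\in A_p=A_{p'}'$ commutes with $A_{p'}$, so $fx$ still intertwines the $A_{p'}$-actions. Likewise $gy\in\Hom_{A_{q'}}(H,L)$ for $y\in A_q$. The same reasoning shows that for $f_1,f_2\in\Hom_{A_{p'}}(H,K)$ the operator $f_2^*f_1\in B(H)$ commutes with $A_{p'}$. Hence $f_2^*f_1\in A_{p'}'=A_p$, which is what makes $\sigma_p(f_2^*f_1)$ in Definition~\ref{def:Connes-fusion} meaningful.

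For the two-sided fusion $K(p)\boxtimes L(q)$, pair $fx\otimes\xi\otimes g - f\otimes x\xi\otimes g$ against an arbitrary $f_2\otimes\xi_2\otimes g_2$ using the first formula $\langle g_2^*g_1f_2^*f_1\xi_1|\xi_2\rangle$. Both terms give $\langle g_2^*g f_2^*f x\xi|\xi_2\rangle$, so the difference is orthogonal to everything. For the relation $f\otimes\xi\otimes gy=f\otimes y\xi\otimes g$, use instead the second expression $\langle f_2^*f_1g_2^*g_1\xi_1|\xi_2\rangle$ of the same form. Both sides then give $\langle f_2^*f g_2^*g y\xi|\xi_2\rangle$. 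This is exactly why the definition records two equal expressions for the form.

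For the left fusion $K(p)\boxtimes L$, compute
\[
\langle fx\otimes\eta|f_2\otimes\eta_2\rangle=\langle\sigma_p(f_2^*fx)\eta|\eta_2\rangle=\langle\sigma_p(f_2^*f)\sigma_p(x)\eta|\eta_2\rangle=\langle f\otimes\sigma_p(x)\eta|f_2\otimes\eta_2\rangle.
\]
Here we use that $f_2^*f\in A_p$ and $x\in A_p$, so $\sigma_p$ is multiplicative on their product. The right fusion $K\boxtimes L(q)$ is handled identically with $\pi_q$ and $g_2^*g\in A_q$.

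The only genuine point is the membership $f_2^*f_1\in A_p$ (resp.\ $g_2^*g_1\in A_q$) via Haag duality; everything else is direct computation.
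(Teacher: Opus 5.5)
Your proposal is correct: showing that each difference is orthogonal to every elementary tensor is exactly the direct verification the paper has in mind. This uses $f_2^*f_1\in A_{p'}'=A_p$ (and likewise $g_2^*g_1\in A_q$), multiplicativity of $\sigma_p$ and $\pi_q$, and the two equal expressions for the two-sided form; the paper states this as a Fact without proof, remarking only that these relations lie in (and span) the null space of the corresponding sesquilinear forms, which your computation establishes.
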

In fact, the algebraic relations in Fact \ref{fact:relative-tensor-relations} span the nullspace of the corresponding sesquilinear form (indeed, our construction in Definition \ref{def:Connes-fusion} can be viewed as the interior tensor product of Hilbert $\rmC^*$-modules c.f.~\cite[Prop.~4.5]{MR1325694}). 
Hence, the forms of Connes fusion in Definition \ref{def:Connes-fusion} can be equivalently defined as completions of relative tensor product spaces under the same (now positive-definite) sesquilinear forms.

\subsection{Action of the net of algebras on the Connes fusion}

By the constructions in the previous subsection, we essentially have a net of one-sided Connes fusion products $K(p)\boxtimes L$ indexed by $p\in\cP$, each equipped with actions of $A_p$ and $A_{p'}$. To define the action of $A_q$ on $K(p)\boxtimes L$ for arbitrary $p,q\in\cP$, the key idea is to use zig-zags via Definition~\ref{def:zigzagmaps} to transport to a space where the action of $A_q$ is already defined. The rest of this subsection is devoted to proving that any choice of zig-zag produces the same action. The next lemma supplies us with the existence of ``short" zig-zags to transport along.

\begin{lemma}\label{lemma:saturation-diagonal-pair}
    The axiom \ref{geom:saturate} implies that for any $p,q\in{\cP}$, at least one of the following holds:
    \begin{enumerate}
        \item There exist $a,b\in{\cP}$ such that $p,q\leq a$ and $b\leq p,q$.
        \item There exist $c,d\in{\cP}$ such that $p,q'\leq c$ and $d\leq p,q'$.
    \end{enumerate}
\end{lemma}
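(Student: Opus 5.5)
We apply the saturation axiom \ref{geom:saturate} twice, once to $p$ and once to $q$, and then combine the two outcomes. The point to keep in mind is that $p\Cap q\neq\emptyset$ gives a common lower bound, while an element $r\leq p,q$ does not by itself give a common upper bound. The upper bound has to come from saturation applied to a different pair.

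First apply \ref{geom:saturate} to $p$ against $q$. Either $p\Cap q\neq\emptyset$, giving some $b\leq p,q$, or $p\Cap q'\neq\emptyset$, giving some $d\leq p,q'$. Next apply \ref{geom:saturate} to $p'$ against $q$. Either $p'\Cap q\neq\emptyset$ or $p'\Cap q'\neq\emptyset$.

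Now translate these lower bounds into upper bounds using the order-reversing involution. If $r\leq p'$ and $r\leq q'$, then $p\leq r'$ and $q\leq r'$, so $a:=r'$ satisfies $p,q\leq a$. Similarly, if $r\leq p'$ and $r\leq q$, then $p\leq r'$ and $q'\leq r'$, so $c:=r'$ satisfies $p,q'\leq c$.

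Thus saturation of $p$ yields a lower bound of $\{p,q\}$ or of $\{p,q'\}$, and saturation of $p'$ yields an upper bound of $\{p,q\}$ or of $\{p,q'\}$. If both land on the same pair, we are done, in case (1) or case (2) respectively.

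The remaining situation is the mismatched one. Suppose we have $b\leq p,q$ and $c\geq p,q'$, with no $d$ and no $a$. That means $p\Cap q'=\emptyset$ and $p'\Cap q'=\emptyset$. Apply \ref{geom:saturate} to $q'$ against $p$: it gives $q'\Cap p\neq\emptyset$ or $q'\Cap p'\neq\emptyset$, contradicting both emptiness statements.

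The other mismatch is $d\leq p,q'$ and $a\geq p,q$. Then $p\Cap q=\emptyset$ and $p'\Cap q=\emptyset$. Applying \ref{geom:saturate} to $q$ against $p$ gives a contradiction in the same way.

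The only real subtlety is this mismatch case. It is resolved by noticing that saturation must be applied to $q$ or $q'$ as the base element, not only to $p$ and $p'$. The proof uses only \ref{geom:saturate} and the properties of the involution.
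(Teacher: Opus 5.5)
Your proof is correct and is essentially the paper's argument. The paper applies \ref{geom:saturate} to all four adjacent pairs of the square $p\Cap q$, $p\Cap q'$, $p'\Cap q'$, $p'\Cap q$ at once, concludes that $p\Cap q\neq\emptyset\neq p'\Cap q'$ or $p\Cap q'\neq\emptyset\neq p'\Cap q$, and then flips the lower bound in $p'\Cap q'$ or $p'\Cap q$ via the involution exactly as you do; your sequential case analysis (saturating $p$, then $p'$, then $q$ or $q'$ in the mismatch case) uses the same four instances of saturation, just organized differently.
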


\begin{proof}
By \ref{geom:saturate}, for each adjacent pair in the square below, one of the sets in the pair must be nonempty.
\[\begin{tikzcd}[column sep=.5em, row sep=.5em]
	{p\Cap q} && {p\Cap q'} \\
	\\
	{p'\Cap q} && {p'\Cap q'}
	\arrow[no head, from=1-1, to=1-3]
	\arrow[no head, from=1-3, to=3-3]
	\arrow[no head, from=3-1, to=1-1]
	\arrow[no head, from=3-3, to=3-1]
\end{tikzcd}\]    
It follows that $p\Cap q\neq\emptyset\neq p'\Cap q'$ or $p\Cap q'\neq\emptyset\neq p'\Cap q$. Thus, we may take $b\in p\Cap q$ and $a'\in p'\Cap q'$ in the first case, and $d\in p\Cap q'$ and $c'\in p'\Cap q$ in the second case.
\end{proof}

\noindent The above lemma gives four types of short zig-zags (two each for cases (1) and (2)), namely:
\begin{itemize}
    \item $\alpha \colon p \leq a \geq q$,
    \item $\beta\colon  p\geq b \leq q$,
    \item $\gamma\colon  p\leq c \geq q'$, and
    \item $\delta\colon p \geq d \leq q'$.
\end{itemize}
We will prove in Proposition \ref{prop:action-from-a-b-c-d} that all four zig-zags yield identical actions on the Connes fusion of superselection sectors. Observe that the inclusion maps induced by $a \geq q$ and $d \leq q'$ in the $\alpha$ and $\delta$ zig-zags, respectively, intertwine the actions of $A_q$. Thus, transporting the action of $A_q$ along $\alpha$ and $\delta$ is the same as transporting along the zig-zags $p\leq a$ and $p\geq d$ respectively. We first show that transporting along $\alpha$ or $\delta$ defines a canonical action: once this is done, it will be easier to prove that $\beta$ and $\gamma$ give the same action.

\begin{proposition}\label{prop:action-from-a-and-d}
    Let $(K,\pi),(L,\sigma)\in\SSS$ and $p\in {\cP}$. There exists a unique action $\pi(p)\boxtimes \sigma$ of $\cA$ on $K(p)\boxtimes L$ such that for all $q\in{\cP}$, $x\in A_q$, and $a,d\in{\cP}$ such that $p,q\leq a$ and $d\leq p,q'$, we have that
    \[
    (\pi(p)\boxtimes \sigma)(x)=\alpha_*^{-1}x\alpha_*=\delta_*^{-1}x\delta_*,
    \]
    where $\alpha$ and $\delta$ are the zig-zags $p\leq a\geq q$ and $p\geq d\leq q'$ respectively. Moreover, this action extends the action described in Remark \ref{remark:natural-actions}.
\end{proposition}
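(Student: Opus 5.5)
We need three things: for a fixed $q$, the formulas $\alpha_*^{-1}x\alpha_*$ and $\delta_*^{-1}x\delta_*$ do not depend on the choice of $a$ or $d$ and agree with each other; for every $q$ at least one such $a$ or $d$ exists, so the action is everywhere defined; and the resulting maps are compatible across $q$ and extend Remark~\ref{remark:natural-actions}.

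Existence is straightforward. By Lemma~\ref{lemma:saturation-diagonal-pair}, for any $q$ either some $a\geq p,q$ exists (case (1)) or some $d\leq p,q'$ exists (case (2)). On $K(a)\boxtimes L$ the group $A_q\subseteq A_a$ acts by $x\cdot(f\otimes\eta)=\pi_a(x)f\otimes\eta$. On $K(d)\boxtimes L$ it acts through $A_q\subseteq A_{q'}'\subseteq$, more precisely $A_q\subseteq A_{d'}$ since $d\leq q'$ gives $q\leq d'$, by $f\otimes\eta\mapsto f\otimes\sigma_{d'}(x)\eta$. Conjugating by the unitaries of Definition~\ref{def:zigzagmaps} gives normal unital $*$-representations of $A_q$ on $K(p)\boxtimes L$. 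Here we observe, as the paper does, that $\alpha_*$ is just the inclusion map $\iota_{p\le a}$ and $\delta_*=\iota_{d\le p}^{-1}$.

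The key computation is how the inclusion unitaries $\iota_{r\le s}\colon K(r)\boxtimes L\to K(s)\boxtimes L$ intertwine the natural actions. They send $f\otimes\eta\mapsto f\otimes\eta$, with $f\in\Hom_{A_{r'}}(H,K)\subseteq\Hom_{A_{s'}}(H,K)$. So $\iota_{r\le s}$ intertwines the left actions of $A_r$ and the right actions of $A_{s'}$.

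Independence of choice then follows.
\begin{itemize}
\item For two choices $a_1,a_2\geq p,q$: both representations restrict on $A_p$ and $A_q$ compatibly. Apply (GA1.5)-type connectivity, or more simply pick by Lemma~\ref{lemma:saturation-diagonal-pair} and saturation some $e\le a_1,a_2$. Instead I would use the direct argument that $\iota_{a_1\le ?}$ may fail to exist, so we compare via $p$.
\item Concretely, take $x\in A_q$, choose a unitary $u\in\Hom_{A_{p'}}(H,K)$, and check the formula on vectors $u\otimes\eta$, which are total by Lemma~\ref{lemma:dense-subsets-of-Connes-fusion}. In $K(a)\boxtimes L$ we have $\pi_a(x)u\otimes\eta=u\otimes\sigma_a(u^*\pi_a(x)u)\eta$, since $u^*\pi_a(x)u\in A_a$ (it commutes with $A_{a'}$). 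This is legitimate by Fact~\ref{fact:relative-tensor-relations} applied over $a$, and that vector lies in the image of $K(p)\boxtimes L$.
\item Hence $\alpha_*^{-1}x\alpha_*(u\otimes\eta)=u\otimes\sigma(u^*\pi(x)u)\eta$, where $u^*\pi_q(x)u\in B(H)$ is computed independently of $a$. So independence from $a$ reduces to showing that $u^*\pi_q(x)u$ lies in the relevant algebra and that $\sigma$ of it is well defined. I expect it is actually $\sigma_a$ of an element of $A_a$; if two choices $a_1,a_2$ give elements of $A_{a_1}$ and $A_{a_2}$ that are equal as operators, the $\sigma$'s agree by isotony whenever $A_{a_1}\cap A_{a_2}$ is reached through a common upper bound. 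That common upper bound is the delicate point.
\item Similarly, for $d$ we get $\delta_*^{-1}x\delta_*(v\otimes\eta)=v\otimes\sigma_{d'}(x)\eta$ for $v$ a unitary in $\Hom_{A_{d'}}(H,K)$. Compatibility with the $a$-formula uses $v^*\pi_q(x)v=x$, which holds because $v$ intertwines $A_{d'}\supseteq A_q$.
\end{itemize}

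I expect the main obstacle to be the comparison between different $a$'s, and between $a$ and $d$, when no common refinement is directly available. My plan is to reduce everything to the intrinsic formula $u\otimes\eta\mapsto u\otimes\sigma_\bullet(u^*\pi_\bullet(x)u)\eta$. I would then use locality of $\sigma$ together with a zig-zag from (GA3), through $q$-small $q$-indicators inside $p$, to show that the $\sigma$-images coincide. This mirrors how \cite{MR4927814} proves transportability is independent of the path.

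Once the action is well defined, multiplicativity and the $*$-property on each $A_q$ are immediate from the conjugation formula. Isotony for $q_1\le q_2$ follows by using the same $a\geq p,q_2$ (so $a\geq q_1$), or the same $d\le q_2'\le q_1'$. Extension of Remark~\ref{remark:natural-actions} follows by taking $a=p$ for $q\le p$ and $d=p$ for $q\le p'$.

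Locality also needs checking: for disjoint $q_1,q_2$ the images should commute. I would deduce this by choosing $r\ge q_1$ with $q_2\le r'$ where possible, so that both actions are realized on the single space $K(r)\boxtimes L$ as left and right actions, which commute. I would handle the remaining configurations with Lemma~\ref{lemma:saturation-diagonal-pair}. Uniqueness is automatic, since the action is determined by the displayed formulas.
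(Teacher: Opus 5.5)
There is a genuine gap: your proposal never shows that $\alpha_*^{-1}x\alpha_*$ is independent of $a$ when no $d\le p,q'$ exists. Nor does it show that $\delta_*^{-1}x\delta_*$ is independent of $d$ when no $a\ge p,q$ exists. You locate the problem correctly. For a unitary $u\in\Hom_{A_{p'}}(H,K)$, the operator $u^*\pi_q(x)u$ lies in $A_{a_1}\cap A_{a_2}$ but need not lie in any $A_r$ with $r\le a_1,a_2$, so isotony cannot compare $\sigma_{a_1}$ and $\sigma_{a_2}$ on it. You then only gesture at an argument via (GA3) and locality that is never carried out. Note also that a common lower bound of $a_1,a_2$ is useless by itself, since $p$ already is one.

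What you do establish is the mixed case. Testing on $v\otimes\eta$ with $v$ a unitary in $\Hom_{A_{d'}}(H,K)$ gives $v^*\pi_q(x)v=x$. Hence every $a$-formula and every $d$-formula equal $v\otimes\eta\mapsto v\otimes\sigma_q(x)\eta$, which is the paper's first computation.

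The missing idea is elementary and uses only (GA1): use the \emph{other} element supplied by Lemma~\ref{lemma:saturation-diagonal-pair}.

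\begin{itemize}
\item If $p\Cap q'=\emptyset$, case (1) of that lemma must hold, so besides $a$ there is $b\le p,q$. Test on $v\otimes\eta$ with $v$ a unitary in $\Hom_{A_{b'}}(H,K)$; as $\eta$ ranges over $L$ these fill $K(b)\boxtimes L\cong K(p)\boxtimes L$. Since $v$ and $\pi_q(x)v$ are both $A_{q'}$-linear, Haag duality gives $v^*\pi_q(x)v\in A_q$. Hence $\sigma_a(v^*\pi_q(x)v)=\sigma_q(v^*\pi_q(x)v)$ for every admissible $a$, and the dependence on $a$ disappears. Equivalently, as in the paper, the commuting square of inclusions $b\le p\le a$, $b\le q\le a$ gives $\alpha_*=\beta_*$, which does not involve $a$.
\item If $p'\Cap q'=\emptyset$, there is symmetrically $c\ge p,q'$. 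The square $d\le p\le c$, $d\le q'\le c$ gives $\delta_*=\gamma_*$ for every $d$.
\end{itemize}

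These two cases, together with the mixed case you handled, are exhaustive.

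Finally, locality is not part of this statement. Your sketch of it would in any case require that the inclusions $K(p)\boxtimes L\to K(r)\boxtimes L$ intertwine the new action. The paper only proves that later, in Proposition~\ref{prop:one-sided-inclusions-intertwine}.
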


\begin{proof}
    Let $x\in A_q$. First suppose there exist $a,d\in{\cP}$ such that $p,q\leq a$ and $d\leq p,q'$. 
    
\[\begin{tikzcd}[column sep=.5em, row sep=.5em]
	& {K(a)\boxtimes L} &&& \\
	\\
	{K(q)\boxtimes L} && {K(p)\boxtimes L} && {K(q')\boxtimes L} \\
	\\
	&&& {K(d)\boxtimes L}
	\arrow["{{\iota_{q,a}}}", from=3-1, to=1-2]
	\arrow["{{\iota_{p,a}}}"', from=3-3, to=1-2]
	\arrow["{{\alpha_*}}", from=3-3, to=3-1]
	\arrow["{{\delta_*}}", from=3-3, to=3-5]
	\arrow["{{\iota_{d,p}}}", from=5-4, to=3-3]
	\arrow["{{\iota_{d,q'}}}"', from=5-4, to=3-5]
\end{tikzcd}\]

    Let $f_j\otimes\eta_j\in \Hom_{A_{d'}}(H,K)\otimes L\subset K(p)\boxtimes L$ for $j=1,2$. Since $\iota_{q,a}$ intertwines $A_q$ and $f_1$ intertwines $A_{d'}$ (and thus $A_q$),
    \begin{align*}
        \langle \alpha_*^{-1}x\alpha_*(f_1\otimes\eta_1)|f_2\otimes\eta_2\rangle
        &=\langle \iota_{p,a}^{-1}x\iota_{p,a}(f_1\otimes\eta_1)|f_2\otimes\eta_2\rangle \\
        &=\langle \pi_a(x)f_1\otimes\eta_1|f_2\otimes\eta_2\rangle \\
        &=\langle \sigma_a(f_2^*\pi_q(x)f_1)\eta_1|\eta_2\rangle \\
        &=\langle \sigma_a(f_2^*f_1x)\eta_1|\eta_2\rangle \\
        &=\langle \sigma_d(f_2^*f_1)\sigma_q(x)\eta_1|\eta_2\rangle.
    \end{align*}
    On the other hand,
    \begin{align*}
        \langle \delta_*^{-1}x\delta_*(f_1\otimes\eta_1)|f_2\otimes\eta_2\rangle
        &=\langle x\delta_*(f_1\otimes\eta_1)|\delta_*(f_2\otimes\eta_2)\rangle \\
        &=\langle x\iota_{d,q'}(f_1\otimes\eta_1)|\iota_{d,q'}(f_2\otimes\eta_2)\rangle\\
        &=\langle f_1\otimes\sigma_q(x)\eta_1|f_2\otimes\eta_2\rangle \\
        &=\langle \sigma_{q'}(f_2^*f_1)\sigma_q(x)\eta_1|\eta_2\rangle \\
        &=\langle \sigma_{d}(f_2^*f_1)\sigma_q(x)\eta_1|\eta_2\rangle.
    \end{align*}
    Since the simple tensors $f\otimes\eta\in \Hom_{A_{d'}}(H,K)\otimes L$ span a dense subset of $K(p)\boxtimes L$ by Lemma \ref{lemma:inclusions-induce-unitaries}, it follows that $\alpha_*^{-1}x\alpha_*=\delta_*^{-1}x\delta_*$.

    Suppose now that $p\Cap q'=\emptyset$. Then by Lemma \ref{lemma:saturation-diagonal-pair}, there exist $a,b\in{\cP}$ such that $p,q\leq a$ and $b\leq p,q$. The diagram below commutes, so $\alpha_*^{-1}x\alpha_*=\beta_*^{-1}x\beta_*$, where $\beta$ is the zig-zag $p\geq b\leq q$.
    \[
\begin{tikzcd}[column sep=.5em, row sep=.5em]
	& {K(a)\boxtimes L} & \\
	\\
	{K(q)\boxtimes L} && {K(p)\boxtimes L} \\
	\\
	& {K(b)\boxtimes L}
	\arrow[from=3-1, to=1-2]
	\arrow[from=3-3, to=1-2]
	\arrow[from=5-2, to=3-1]
	\arrow[from=5-2, to=3-3]
\end{tikzcd}
\]
In particular, any two $a_1,a_2\geq p,q$ define the same action of $x$. Similarly, if $p'\Cap q'=\emptyset$, then there exist $c,d\in{\cP}$ such that $p,q'\leq c$ and $d\leq p,q'$, and one can show that any two $d_1,d_2\leq p,q'$ define the same action of $x$. By Lemma \ref{lemma:saturation-diagonal-pair}, one of the above three cases must occur, so the desired action $\pi(p)\boxtimes \sigma$ exists. That this new action extends the one in Remark \ref{remark:natural-actions} follows from taking $a=p$ for $q=p$, and taking $d=p$ for $q=p'$.
\end{proof}

We next work to show that all four types of short zig-zags from Lemma \ref{lemma:saturation-diagonal-pair} can be used to define the action on the Connes fusion. It is clear that for $p_1,p_2\in{\cP}$ with $p_1\leq p_2$, the canonical inclusion $K(p_1)\boxtimes L\to K(p_2)\boxtimes L$ intertwines the actions of $A_{p_1}$ and $A_{p_2'}$, which are defined on $K(p_1)\boxtimes L$ and $K(p_2)\boxtimes L$ via Remark \ref{remark:natural-actions}. The next lemma is a mild improvement upon this fact, in light of our newly defined action.

\begin{lemma}\label{lemma:inclusions-partially-intertwine}
    Let $p_1,p_2\in{\cP}$ with $p_1\leq p_2$. Then the canonical inclusion $\iota:K(p_1)\boxtimes L\to K(p_2)\boxtimes L$ intertwines the actions of $A_{p_2}$ and $A_{p_1'}$, where the actions are defined as in Proposition \ref{prop:action-from-a-and-d}.
\end{lemma}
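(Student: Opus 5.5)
We need to show: for $x\in A_{p_2}$, $\iota\,(\pi(p_1)\boxtimes\sigma)(x) = (\pi(p_2)\boxtimes\sigma)(x)\,\iota$, and similarly for $x\in A_{p_1'}$. The plan is to compute each side using a convenient choice of short zig-zag allowed by Proposition~\ref{prop:action-from-a-and-d}, so that the relevant diagram of inclusions commutes on the nose.

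\emph{Case $x\in A_{p_2}$.} On $K(p_2)\boxtimes L$ the action of $A_{p_2}$ is the natural one from Remark~\ref{remark:natural-actions} (take $a=p_2$). On $K(p_1)\boxtimes L$, take $q=p_2$ and $a=p_2\geq p_1,p_2$ in Proposition~\ref{prop:action-from-a-and-d}. The zig-zag $\alpha\colon p_1\leq p_2\geq p_2$ then has $\alpha_*=\iota$, the canonical inclusion. Hence
\[
(\pi(p_1)\boxtimes\sigma)(x)=\iota^{-1}x\,\iota ,
\]
where $x$ acts naturally on $K(p_2)\boxtimes L$. Since $\iota$ is unitary by Lemma~\ref{lemma:inclusions-induce-unitaries}, this is exactly the intertwining relation.

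\emph{Case $x\in A_{p_1'}$.} On $K(p_1)\boxtimes L$ the action is the natural one via $\sigma_{p_1'}$ (take $d=p_1$, $q=p_1'$). On $K(p_2)\boxtimes L$, we use Proposition~\ref{prop:action-from-a-and-d} with $q=p_1'$ and $d=p_1$, which is allowed since $p_1\leq p_2$ and $p_1\leq q'=p_1$. The zig-zag $\delta\colon p_2\geq p_1\leq p_1$ has $\delta_*=\iota^{-1}$. This gives
\[
(\pi(p_2)\boxtimes\sigma)(x)=\iota\, x\,\iota^{-1},
\]
with $x$ acting naturally on $K(p_1)\boxtimes L$, which again is the claim.

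\emph{Main point to check.} The only subtlety is the bookkeeping that the induced unitary of a length-one zig-zag (or a degenerate one with a repeated element) is literally the canonical inclusion or its inverse, as given in Definition~\ref{def:zigzagmaps}. One also needs that Proposition~\ref{prop:action-from-a-and-d} allows any admissible $a$ or $d$, including the extreme choices $a=p_2$ and $d=p_1$. Both follow directly from the statements as given.

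If one prefers to avoid degenerate zig-zags, the same conclusion can be verified on simple tensors $f\otimes\eta$ with $f\in\Hom_{A_{p_1'}}(H,K)$. There $\iota(f\otimes\eta)=f\otimes\eta$, and both actions are given by $\pi_{p_2}(x)f\otimes\eta$, respectively $f\otimes\sigma_{p_1'}(x)\eta$; note that $\pi_{p_2}(x)f\in\Hom_{A_{p_2'}}(H,K)$ because $\pi$ is local. This formula agrees on both spaces, and density from Lemma~\ref{lemma:dense-subsets-of-Connes-fusion} then finishes the argument.
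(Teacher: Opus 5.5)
Your main argument matches the paper's proof: you apply Proposition~\ref{prop:action-from-a-and-d} with the degenerate zig-zags $p_1\leq p_2\geq p_2$ (for $A_{p_2}$) and $p_2\geq p_1\leq p_1$ (for $A_{p_1'}$), whose induced unitaries are $\iota$ and $\iota^{-1}$, so the intertwining relations hold by definition. Your optional simple-tensor aside is only loosely phrased and does not avoid the proposition: $\pi_{p_2}(x)f$ is generally not $A_{p_1'}$-linear, so the $A_{p_2}$-action on $K(p_1)\boxtimes L$ has no intrinsic simple-tensor formula there, and it must still be obtained by transporting to a larger region via that proposition.
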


\begin{proof}
    Since $p_1\leq p_2\geq p_2$, by Proposition \ref{prop:action-from-a-and-d}, the action of $x\in A_{p_2}$ on $f\otimes\eta\in K(p_1)\boxtimes L$ is given by
    \[
    x\cdot(f\otimes\eta)=\iota^{-1}x\iota(f\otimes\eta),
    \]
    so $\iota$ intertwines $A_{p_2}$. Similarly, since $p_2\geq p_1\leq p_1=p_1''$, the action of $y\in A_{p_1'}$ on $f\otimes\eta\in K(p_2)\boxtimes L$ is given by
    \[
    y\cdot(f\otimes\eta)=\iota y\iota^{-1}(f\otimes\eta),
    \]
    so $\iota$ intertwines $A_{p_1'}$.
\end{proof}

We next observe that ``local'' zig-zags induce the same maps on superselection sectors.

\begin{lemma}\label{lemma:localized-zig-zag}
    Suppose $\zeta=(z_1,y_1,z_2,\dots,z_n,y_n,z_{n+1})$ is a zig-zag in ${\cP}$. If $\zeta$ is contained in some $p\in{\cP}$, then $\zeta_*=\theta_*$, where $\theta$ is the zig-zag $z_1\leq p\geq z_{n+1}$.
\end{lemma}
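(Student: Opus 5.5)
The plan is to reduce everything to commutativity of inclusion maps. The map $\zeta_*$ is a composite of unitaries of the form $\iota_{z_j,y_j}$ and $\iota_{z_{j+1},y_j}^{-1}$ (Lemma~\ref{lemma:inclusions-induce-unitaries}, Definition~\ref{def:zigzagmaps}). The inclusion maps are canonical in the sense that $\iota_{r,t}=\iota_{s,t}\circ\iota_{r,s}$ whenever $r\leq s\leq t$. This holds because each is induced by the literal inclusion $\Hom_{A_{r'}}(H,K)\subseteq \Hom_{A_{s'}}(H,K)\subseteq\Hom_{A_{t'}}(H,K)$ (using $t'\leq s'\leq r'$), and on the dense algebraic tensors $f\otimes\eta$ all of them act as $f\otimes\eta\mapsto f\otimes\eta$. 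Functoriality for composable inclusions then follows from continuity and density.

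With this in hand, I would argue by induction on the length $n$ of $\zeta$. Since $\zeta$ is contained in $p$, every $z_j,y_j\leq p$. For each $j$, the two-step zig-zag $z_j\leq y_j\geq z_{j+1}$ induces $\iota_{z_{j+1},y_j}^{-1}\iota_{z_j,y_j}$. Using $\iota_{z_j,p}=\iota_{y_j,p}\iota_{z_j,y_j}$ and the analogous identity for $z_{j+1}$, this equals $\iota_{z_{j+1},p}^{-1}\iota_{y_j,p}^{-1}\iota_{y_j,p}\iota_{z_j,y_j}$. Applying the same identities once more rewrites it as $\iota_{z_{j+1},p}^{-1}\iota_{z_j,p}$, which is exactly the map induced by $z_j\leq p\geq z_{j+1}$. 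Here we use that $\iota_{y_j,p}$ is unitary, hence injective, so that the inverses make sense.

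Composing over $j$, the factors telescope:
\[
\zeta_*=\prod_j \iota_{z_{j+1},p}^{-1}\iota_{z_j,p}=\iota_{z_{n+1},p}^{-1}\iota_{z_1,p}=\theta_*.
\]
The same argument applies verbatim to the right-sided fusion $K\boxtimes L(q)$, where the inclusions come from $\Hom_{A_{q'}}(H,L)$.

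The main (though modest) point to verify is the transitivity $\iota_{r,t}=\iota_{s,t}\iota_{r,s}$ of the unitaries produced in Lemma~\ref{lemma:inclusions-induce-unitaries}. These were obtained from the commuting diagram with the evaluation maps $\cR,\cL$, so one should check that the induced map really is the extension of $f\otimes\eta\mapsto f\otimes\eta$ on the dense subspace $\Hom_{A_{r'}}(H,K)\otimes L$. Density of this subspace in $K(r)\boxtimes L$ holds by definition. Once the maps agree on it, bounded extensions agree, and transitivity is immediate.
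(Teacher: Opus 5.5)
Your proposal is correct and is essentially the paper's argument: the paper proves this by observing that all the inclusion maps into $K(p)\boxtimes L$ form a commuting diagram, which is exactly your transitivity $\iota_{r,t}=\iota_{s,t}\iota_{r,s}$ followed by telescoping. One small slip: your intermediate expression should read $\iota_{z_{j+1},p}^{-1}\iota_{y_j,p}\iota_{z_j,y_j}$, without the extra $\iota_{y_j,p}^{-1}$, which does not typecheck as written; your final identity $\iota_{z_{j+1},p}^{-1}\iota_{z_j,p}$ is correct.
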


\begin{proof}
    Immediate from the commutativity of the below diagram.
\[
    \adjustbox{scale=0.7}{\begin{tikzcd}
	&&& {K(p)\boxtimes L} &&& \\
	\\
	{K(z_1)\boxtimes L} & {K(y_1)\boxtimes L} & {K(z_2)\boxtimes L} & \cdots & {K(z_n)\boxtimes L} & {K(y_n)\boxtimes L} & {K(z_{n+1})\boxtimes L}
	\arrow[from=3-1, to=1-4]
	\arrow[from=3-1, to=3-2]
	\arrow[from=3-2, to=1-4]
	\arrow[from=3-3, to=1-4]
	\arrow[from=3-3, to=3-2]
	\arrow[from=3-3, to=3-4]
	\arrow[from=3-5, to=1-4]
	\arrow[from=3-5, to=3-4]
	\arrow[from=3-5, to=3-6]
	\arrow[from=3-6, to=1-4]
	\arrow[from=3-7, to=1-4]
	\arrow[from=3-7, to=3-6]
\end{tikzcd}}
\qedhere
\]
\end{proof}
The following proposition justifies that the actions are the same regardless of choice of type of short zig-zag. As a consequence, the actions are furthermore independent of choice of short zig-zag of a particular type. This is also the main tool used to prove that inclusion maps $p_1 \leq p_2$ are intertwiners of superselection sectors.
\begin{proposition}\label{prop:action-from-a-b-c-d}
    Suppose $(K,\pi),(L,\sigma)\in\SSS$ and $p\in {\cP}$. The action $\pi(p)\boxtimes \sigma$ of $A$ on $K(p)\boxtimes L$ from Proposition \ref{prop:action-from-a-and-d} satisfies the following: for all $q\in{\cP}$, $x\in A_q$, and $a,b,c,d\in{\cP}$ such that $p,q\leq a$, $b\leq p,q$, $p,q'\leq c$, and $d\leq p,q'$, we have that
    \[
    (\pi(p)\boxtimes \sigma)(x)=\alpha_*^{-1}x\alpha_*=\beta_*^{-1}x\beta_*=\gamma_*^{-1}x\gamma=\delta_*^{-1}x\delta_*,
    \]
    where $\alpha$, $\beta$, $\gamma$, and $\delta$ are the zig-zags $p\leq a\geq q$, $p\geq b\leq q$, $p\leq c\geq q'$, and $p\geq d\leq q'$ respectively.
\end{proposition}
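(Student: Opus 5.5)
The plan is to deduce the statement from Proposition~\ref{prop:action-from-a-and-d}, which already shows that $\alpha$ and $\delta$ define the action. It then remains to show that $\beta_*^{-1}x\beta_*$ and $\gamma_*^{-1}x\gamma_*$ coincide with it. I will split along the two cases of Lemma~\ref{lemma:saturation-diagonal-pair}.

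\emph{Case 1: $\beta$ when some $a\geq p,q$ exists, and $\gamma$ when some $d\leq p,q'$ exists.} Suppose $b\leq p,q$ and some $a\geq p,q$ also exists. The square of inclusions $b\to p\to a$, $b\to q\to a$ commutes, exactly as in the proof of Proposition~\ref{prop:action-from-a-and-d}. Since $\iota_{q,a}$ intertwines $A_q$ (Lemma~\ref{lemma:inclusions-partially-intertwine}), this gives $\beta_*^{-1}x\beta_*=\alpha_*^{-1}x\alpha_*$. Dually, suppose $p,q'\leq c$ and some $d\leq p,q'$ exists. The square $d\to p\to c$, $d\to q'\to c$ commutes, and $\iota_{d,q'}$ intertwines $A_{d'}\supseteq A_q$ (again Lemma~\ref{lemma:inclusions-partially-intertwine}). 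Hence $\gamma_*^{-1}x\gamma_*=\delta_*^{-1}x\delta_*$. By Lemma~\ref{lemma:localized-zig-zag}, none of this depends on the particular choice of $a,b,c,d$.

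\emph{Case 2: the mixed situation.} This is the remaining case and the main obstacle: we have $b\leq p,q$, but no $a$ exists and instead $d\leq p,q'$ exists (and symmetrically, $c$ exists but $d$ does not, with $a,b$ present). Here I would compare $\beta$ with $\delta$ directly by a mixed inner-product computation. Take $f_1\in\Hom_{A_{b'}}(H,K)$ and $f_2\in\Hom_{A_{d'}}(H,K)$. Lemma~\ref{lemma:dense-subsets-of-Connes-fusion} combined with Lemma~\ref{lemma:inclusions-induce-unitaries} shows that simple tensors of each of these kinds span dense subspaces of $K(p)\boxtimes L$, so it suffices to check
\[
\langle \beta_*^{-1}x\beta_*(f_1\otimes\eta_1)|f_2\otimes\eta_2\rangle=\langle \delta_*^{-1}x\delta_*(f_1\otimes\eta_1)|f_2\otimes\eta_2\rangle .
\]

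For the $\delta$ side, I will move the operator to the second slot using its adjoint $\delta_*^{-1}x^*\delta_*$. That adjoint acts on $f_2\otimes\eta_2$ by $f_2\otimes\sigma_q(x^*)\eta_2$, as in the proof of Proposition~\ref{prop:action-from-a-and-d}, so the right-hand side equals
\[
\langle \sigma_q(x)\sigma_p(f_2^*f_1)\eta_1|\eta_2\rangle .
\]
For the $\beta$ side, $\iota_{b,q}$ intertwines $A_q$, so $\beta_*^{-1}x\beta_*(f_1\otimes\eta_1)$ is the image of $\pi_q(x)f_1\otimes\eta_1$ under the unitary identification. To pair this against $f_2\otimes\eta_2$, I would use three facts: $f_2$ is $A_{d'}$-linear and $A_q\subseteq A_{d'}$, so $f_2^*\pi_q(x)=xf_2^*$; the operator $f_2^*f_1$ commutes with $A_{p'}$, because $A_{p'}\subseteq A_{b'}\cap A_{d'}$, and so lies in $A_p$; and the relative tensor relations of Fact~\ref{fact:relative-tensor-relations}. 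Together these should let me rewrite the pairing as
\[
\langle\sigma_q(x)\sigma_p(f_2^*f_1)\eta_1|\eta_2\rangle ,
\]
which matches the $\delta$ side.

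The delicate point is to make the $\beta$-side pairing rigorous, since $\pi_q(x)f_1$ is not $A_{p'}$-linear. I would first try to factor $\pi_q(x)f_1$ through a unitary in $\Hom_{A_{b'}}(H,K)$, which exists by the absorbing property, i.e.\ write $\pi_q(x)f_1=u\,z$ with $z\in B(H)$. The aim is then to use Fact~\ref{fact:relative-tensor-relations} in $K(b)\boxtimes L$ to move $z$ onto $\eta_1$, and only afterwards apply $\iota_{b,p}$.

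The symmetric mixed case ($a,b$ exist, $c$ exists but not $d$) should follow by the same computation with $\gamma$ against $\alpha$, with the roles of $q$ and $q'$ interchanged; the natural action of $x\in A_q$ on $K(q')\boxtimes L$ is via $\sigma_q$. By Lemma~\ref{lemma:saturation-diagonal-pair}, every configuration is covered by Case~1 or Case~2, which completes the plan.
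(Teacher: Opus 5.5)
Your case split and Case 1 are fine. There, in fact, $\beta_*=\alpha_*$ and $\gamma_*=\delta_*$ hold on the nose, by commutativity of the squares. But Case 2 carries all the content of the proposition, and the computation you propose does not go through.

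The factorization step itself is sound. With $u\in\Hom_{A_{b'}}(H,K)$ unitary, $z=u^*\pi_q(x)f_1$ lies in $A_q$. It need not lie in $A_b$, so the relative tensor relation must be applied in $K(q)\boxtimes L$, not $K(b)\boxtimes L$. This gives $\beta_*^{-1}x\beta_*(f_1\otimes\eta_1)=u\otimes\sigma_q(z)\eta_1$, so the $\beta$-side pairing is $\langle\sigma_p(f_2^*u)\,\sigma_q(u^*\pi_q(x)f_1)\eta_1|\eta_2\rangle$.

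To reach $\langle\sigma_q(x)\sigma_p(f_2^*f_1)\eta_1|\eta_2\rangle$, you would have to merge $\sigma_p(\cdot)\,\sigma_q(\cdot)$ using the identity $(f_2^*u)(u^*\pi_q(x)f_1)=x\,f_2^*f_1$ in $B(H)$. That amounts to treating $\sigma$ as multiplicative on products of elements of $A_p$ and $A_q$. This is only available inside some $A_a$ with $a\geq p,q$, and in this case no such $a$ exists. A superselection sector is only a compatible family of local representations, so it gives no a priori relation between $\sigma_p$ and $\sigma_q$ on such products.

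Concretely, write $f_1=uh$ and $f_2=vg$, with $v\in\Hom_{A_{d'}}(H,K)$ unitary, $h\in A_b$, $g\in A_d$, and set $w=v^*u\in A_p$. Then $u^*\pi_q(x)u=w^*xw$, since $\pi_q(x)v=vx$. After using locality to move $\sigma_d(g^*)$ past $\sigma_q(x)$, the desired equality becomes equivalent to
\[
\sigma_p(w)\,\sigma_q(w^*xw)=\sigma_q(x)\,\sigma_p(w)\qquad\text{for all } x\in A_q .
\]
This is a genuine transportability statement for the unitary carrying the $b$-localized copy of $K$ to the $d$-localized one, and none of your three facts implies it.

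The missing ingredient is the zig-zag axiom (GA3), which your argument never uses. If your argument worked, it would prove the proposition with no geometric input beyond the existence of $b$ and $d$. The paper proceeds as follows:
\begin{itemize}
\item It takes a $q$-small $q$-indicator $\widetilde d\leq d$.
\item It takes a (GA3) zig-zag $b=z_1\leq y_1\geq\cdots\leq y_n\geq z_{n+1}=\widetilde d$ inside $p$, with each $z_j$ a $q$-indicator and each $y_j$ $q$-small.
\item It checks that each inclusion $K(z_j)\boxtimes L\to K(y_j)\boxtimes L$ intertwines $A_q$. If $z_j\leq q'$, this is Lemma~\ref{lemma:inclusions-partially-intertwine}. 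If $z_j\leq q$, it uses a common upper bound $r\geq q,y_j$.
\end{itemize}
In your language, this factors $w$, up to a factor in $A_d$ handled by locality, into pieces $u_{j+1}^*u_j\in A_{y_j}$. Each piece lives in a region sharing an upper bound with $q$, where $\sigma$ is multiplicative, so the displayed identity holds for it, and telescoping gives it for $w$.

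The other mixed case, $\gamma$ versus $\alpha$, is also not a formal mirror of the first. Exchanging $q$ and $q'$ swaps $\beta\leftrightarrow\delta$ and $\alpha\leftrightarrow\gamma$, but it also replaces $A_q$ by $A_{q'}$. The paper handles this case by passing through the two-sided fusion and applying the $\beta/\delta$ result to the right-sided fusion over $p'$, so it too rests on (GA3).
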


\begin{proof}
    It remains to show that $(\pi(p)\boxtimes \sigma)(x)=\beta_*^{-1}x\beta_*$ and $(\pi(p)\boxtimes \sigma)(x)=\gamma_*^{-1}x\gamma$ for any $b,c$ as above. First suppose there exists $d\in{\cP}$ such that $d\leq p,q'$. It is immediate that $\gamma_*^{-1}x\gamma_*=\delta_*^{-1}x\delta_*=(\pi(p)\boxtimes \sigma)(x)$, since $\delta_*=\gamma_*$ by the commutativity of the diagram below.
    \[
    \begin{tikzcd}[column sep=.5em, row sep=.5em]
	& {K(c)\boxtimes L} & \\
	\\
	{K(p)\boxtimes L} && {K(q')\boxtimes L} \\
	\\
	& {K(d)\boxtimes L}
	\arrow[from=3-1, to=1-2]
	\arrow[from=3-3, to=1-2]
	\arrow[from=5-2, to=3-1]
	\arrow[from=5-2, to=3-3]
\end{tikzcd}
\]
We need to show that for any $b$ as above, $\beta_*^{-1}x\beta_*=\delta_*^{-1}x\delta_*$. This is easy if $p'\Cap q'\neq\emptyset$, but this may not be the case: in general, the situation is the below diagram.
\[
\begin{tikzcd}[column sep=.5em, row sep=.5em]
	&&& {K(c)\boxtimes L} & \\
	\\
	{K(q)\boxtimes L} && {K(p)\boxtimes L} && {K(q')\boxtimes L} \\
	\\
	& {K(b)\boxtimes L} && {K(d)\boxtimes L}
	\arrow[from=3-3, to=1-4]
	\arrow[from=3-5, to=1-4]
	\arrow[from=5-2, to=3-1]
	\arrow[from=5-2, to=3-3]
	\arrow[from=5-4, to=3-3]
	\arrow[from=5-4, to=3-5]
\end{tikzcd}
\]
By Lemma \ref{lemma:inclusions-partially-intertwine}, the canonical inclusions $K(b)\boxtimes L\to K(q)\boxtimes L$ and $K(d)\boxtimes L\to K(q')\boxtimes L$ intertwine $A_q$, so it suffices to show that the composition $\theta:K(b)\boxtimes L\to K(p)\boxtimes L\xleftarrow{}K(d)\boxtimes L$ intertwines $A_q$. By \ref{geom:qSmallqIndicator}, there exists a $q$-small $q$-indicator $\widetilde{d}\leq d\leq p$. Observe that $b\leq p$ is a $q$-small $q$-indicator, so by \ref{geom:ZigZag}, there exists a zig-zag $\zeta=(b=z_1,y_1,z_2,\dots,z_n,y_n,z_{n+1}=\widetilde{d})$ such that each $z_j$ is a $q$-indictor and each $y_j$ is $q$-small. By Lemma \ref{lemma:localized-zig-zag}, $\theta_*$ is equal to the composition of $\zeta_*$ and the canonical inclusion $K(\widetilde{d})\boxtimes L\to K(d)\boxtimes L$. The latter map intertwines $A_q$, so it remains to show that $\zeta_*$ intertwines $A_q$. Suppose $y,z\in{\cP}$ are such that $z$ is a $q$-indicator and $y$ is $q$-small, so that there exists (in particular) $r\in{\cP}$ with $q,y\leq r$. If $z\leq q'$, then the canonical inclusion $K(z)\boxtimes L\to K(y)\boxtimes L$ intertwines $A_{z'}\supset A_{q}$ by Lemma \ref{lemma:inclusions-partially-intertwine}. Otherwise, if $z\leq q$, then the below diagram commutes.
\[
    \begin{tikzcd}[column sep=.5em, row sep=.5em]
	& {K(r)\boxtimes L} & \\
	\\
	{K(q)\boxtimes L} && {K(y)\boxtimes L} \\
	\\
	& {K(z)\boxtimes L}
	\arrow[from=3-1, to=1-2]
	\arrow[from=3-3, to=1-2]
	\arrow[from=5-2, to=3-1]
	\arrow[from=5-2, to=3-3]
\end{tikzcd}
\]
Thus, the map $K(z)\boxtimes L\to K(y)\boxtimes L$ is equal to the map $K(z)\boxtimes L\to K(q)\boxtimes L\to K(r)\boxtimes L\xleftarrow{}K(y)\boxtimes L$, which intertwines $A_q$ by Lemma \ref{lemma:inclusions-partially-intertwine}. Hence, $\beta_*^{-1}x\beta_*=\delta_*^{-1}x\delta_*$. Next we consider the remaining case, namely when there exists $a\in{\cP}$ such that $p,q\leq a$. It is easy to see that $\beta_*^{-1}x\beta_*=\alpha_*^{-1}x\alpha_*$ for any $b$ as in the proposition. To show that $\gamma_*^{-1}x\gamma_*=\alpha_*^{-1}x\alpha_*$, we need to show that the top row in the below diagram intertwines $A_q$.
\[
    \adjustbox{scale=0.55, center}{
    \begin{tikzcd}
	{K(q)\boxtimes L} && {K(a)\boxtimes L} && {K(p)\boxtimes L} && {K(c)\boxtimes L} && {K(q')\boxtimes L} \\
	{K(q)\boxtimes L(q')} & {K(q)\boxtimes L(a')} & {K(a)\boxtimes L(a')} & {K(p)\boxtimes L(a')} & {K(p)\boxtimes L(p')} & {K(p)\boxtimes L(c')} & {K(c)\boxtimes L(c')} & {K(q')\boxtimes L(c')} & {K(q')\boxtimes L(q)} \\
	{K\boxtimes L(q')} && {K\boxtimes L(a')} && {K\boxtimes L(p')} && {K\boxtimes L(c')} && {K\boxtimes L(q)}
	\arrow[from=1-1, to=1-3]
	\arrow[from=1-3, to=2-3]
	\arrow[from=1-5, to=1-3]
	\arrow[from=1-5, to=1-7]
	\arrow[from=1-9, to=1-7]
	\arrow[from=2-1, to=1-1]
	\arrow[from=2-1, to=3-1]
	\arrow[from=2-2, to=1-1]
	\arrow[from=2-2, to=2-1]
	\arrow[from=2-2, to=2-3]
	\arrow[from=2-2, to=3-3]
	\arrow[from=2-4, to=1-5]
	\arrow[from=2-4, to=2-3]
	\arrow[from=2-4, to=2-5]
	\arrow[from=2-4, to=3-3]
	\arrow[from=2-5, to=1-5]
	\arrow[from=2-5, to=3-5]
	\arrow[from=2-6, to=1-5]
	\arrow[from=2-6, to=2-5]
	\arrow[from=2-6, to=2-7]
	\arrow[from=2-6, to=3-7]
	\arrow[from=2-7, to=1-7]
	\arrow[from=2-7, to=3-7]
	\arrow[from=2-8, to=1-9]
	\arrow[from=2-8, to=2-7]
	\arrow[from=2-8, to=2-9]
	\arrow[from=2-8, to=3-7]
	\arrow[from=2-9, to=1-9]
	\arrow[from=2-9, to=3-9]
	\arrow[from=3-3, to=2-3]
	\arrow[from=3-3, to=3-1]
	\arrow[from=3-3, to=3-5]
	\arrow[from=3-7, to=3-5]
	\arrow[from=3-7, to=3-9]
\end{tikzcd}
}
\]
Note that the bottom row intertwines $A_q$ by the previous case, and both outer columns clearly intertwine $A_q$. Since the diagram commutes, we're done.
\end{proof}

It follows immediately from the next proposition that the action of the net of algebras on the left-sided Connes fusion can be defined by transporting along any zig-zag, not just the short zig-zags from Lemma \ref{lemma:saturation-diagonal-pair}.

\begin{proposition}\label{prop:one-sided-inclusions-intertwine}
    The canonical inclusion maps for left-sided Connes fusion intertwine the action of the net of algebras $\cA$.
\end{proposition}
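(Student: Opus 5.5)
We need to show: for $p_1\le p_2$ and any $q\in\cP$, the canonical inclusion $\iota:K(p_1)\boxtimes L\to K(p_2)\boxtimes L$ satisfies $\iota\,(\pi(p_1)\boxtimes\sigma)(x)=(\pi(p_2)\boxtimes\sigma)(x)\,\iota$ for all $x\in A_q$. The plan is to reduce, via Proposition \ref{prop:action-from-a-b-c-d}, to choosing a short zig-zag from $p_2$ to $q$ or $q'$ that can be precomposed with $p_1\le p_2$. The resulting zig-zag from $p_1$ is then itself one of the short zig-zags allowed in Proposition \ref{prop:action-from-a-b-c-d}, or is reduced to one by Lemma \ref{lemma:localized-zig-zag}.

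Apply Lemma \ref{lemma:saturation-diagonal-pair} to the pair $(p_2,q)$.

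In case (1), we have $a\ge p_2,q$. Then $a\ge p_1,q$ as well, so $\alpha_1: p_1\le a\ge q$ is admissible for $p_1$ and $\alpha_2:p_2\le a\ge q$ is admissible for $p_2$. Since inclusions compose, $(\alpha_1)_*=(\alpha_2)_*\circ\iota$. Hence
\[
(\pi(p_1)\boxtimes\sigma)(x)=(\alpha_1)_*^{-1}x(\alpha_1)_*=\iota^{-1}(\alpha_2)_*^{-1}x(\alpha_2)_*\iota=\iota^{-1}(\pi(p_2)\boxtimes\sigma)(x)\iota .
\]
Because $\iota$ is unitary by Lemma \ref{lemma:inclusions-induce-unitaries}, this is exactly the intertwining relation.

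In case (2), we have $c\ge p_2,q'$. Then $c\ge p_1,q'$, so the $\gamma$-type zig-zags $p_i\le c\ge q'$ are admissible for both $p_1$ and $p_2$. The same computation applies, using the $\gamma$-formula of Proposition \ref{prop:action-from-a-b-c-d}.

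The main subtlety is that Proposition \ref{prop:action-from-a-b-c-d} presupposes that the relevant elements exist for the pair in question. For instance, the $\gamma$-formula is proved in the branch where some $d\le p,q'$ exists or some $a\ge p,q$ exists. I will check that Lemma \ref{lemma:saturation-diagonal-pair} applied to $(p_1,q)$ makes one of the formulas applicable. Only existence of a single upper bound $a$ or $c$ is needed, and Proposition \ref{prop:action-from-a-b-c-d} asserts the formula for every such choice.

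If some branch needs the lower-bound zig-zags instead, I can use upward closure differently. Any $b\le p_1,q$ also satisfies $b\le p_2,q$, and any $d\le p_1,q'$ also satisfies $d\le p_2,q'$. This yields the alternative factorization $(\beta_2)_*=\iota\circ(\beta_1)_*$, by commutativity of the inclusion triangle $b\le p_1\le p_2$.

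Finally, since every zig-zag map is a composite of inclusions and their inverses, it follows that $\zeta_*$ intertwines the $\cA$-actions for every zig-zag $\zeta$.
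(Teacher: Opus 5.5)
Your argument is correct and is essentially the paper's proof in mirror image. The paper uses saturation to pick a lower bound $b\le p_1,q$ (or $d\le p_1,q'$), which is automatically a lower bound for $p_2$, and transports along the shared $\beta$- (or $\delta$-) zig-zag; you instead pick an upper bound $a\ge p_2,q$ (or $c\ge p_2,q'$), automatically an upper bound for $p_1$, and transport along the shared $\alpha$- (or $\gamma$-) zig-zag, with both routes concluding from a commuting inclusion triangle and Proposition~\ref{prop:action-from-a-b-c-d}. One small slip in your optional lower-bound remark: the correct factorization is $(\beta_1)_*=(\beta_2)_*\circ\iota$, since $\iota\circ(\beta_1)_*$ does not type-check, and this is exactly parallel to your $\alpha$ case.
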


\begin{proof}
    Let $p_1,p_2,q\in{\cP}$ with $p_1\leq p_2$. By \ref{geom:saturate}, there exists $b\in p_1\Cap q\subseteq p_2\Cap q$ or $d\in p_1\Cap q'\subseteq p_2\Cap q'$. In the former case, by Proposition \ref{prop:action-from-a-b-c-d}, the action of $A_q$ on $K(p_1)\boxtimes L$ and $K(p_2)\boxtimes L$ is realized by transporting along the zig-zags $p_1\geq b\leq q$ and $p_2\geq b\leq q$ respectively. The commutativity of the below diagram implies that the canonical inclusion $K(p_1)\boxtimes L\to K(p_2)\boxtimes L$ intertwines the action of $A_q$.
    \[
    \begin{tikzcd}[column sep=.5em, row sep=.75em]
	{K(q)\boxtimes L} && {K(p_2)\boxtimes L} \\
	&& {K(p_1)\boxtimes L} \\
	& {K(b)\boxtimes L}
	\arrow[from=2-3, to=1-3]
	\arrow[from=3-2, to=1-1]
	\arrow[from=3-2, to=1-3]
	\arrow[from=3-2, to=2-3]
\end{tikzcd}
\]
The latter case follows similarly.
\end{proof}

We can similarly define an action $\pi\boxtimes\sigma(p)$ on $K\boxtimes L(p)$. The two canonical evaluation maps allow us to transport the action on the left-sided or right-sided Connes fusion onto the two-sided Connes fusion. The next proposition shows that these two ways produce the same action.

\begin{definitionproposition}
    Let $p,q\in{\cP}$ be disjoint. For all $r\in{\cP}$ and $x\in A_r$, we have
    \[
    (\cR_{p,q})^{-1}x\cR_{p,q}=(\cL_{p,q})^{-1}x\cL_{p,q}.
    \]
    Define the action of $A_r$ on $K(p)\boxtimes L(q)$ by either of the above formulas.
\end{definitionproposition}

\begin{proof}
    First we consider the case $q=p'$. Suppose there exists $a'\in p'\Cap r'$. Then the below diagram commutes and the rightmost column intertwines $A_r$, so $(\cR_{p,p'})^{-1}x\cR_{p,p'}=(\cL_{p,p'})^{-1}x\cL_{p,p'}$.
    \begin{center}
    \begin{tikzcd}[column sep=1.5em, row sep=1em]
	{K(p)\boxtimes L} && {K(a)\boxtimes L} && {K(r)\boxtimes L} \\
	\\
	{K(p)\boxtimes L(p')} & {K(p)\boxtimes L(a')} & {K(a)\boxtimes L(a')} & {K(r)\boxtimes L(a')} & {K(r)\boxtimes L(r')} \\
	\\
	{K\boxtimes L(p')} && {K\boxtimes L(a')} && {K\boxtimes L(r')}
	\arrow[from=1-1, to=1-3]
	\arrow[from=1-5, to=1-3]
	\arrow["{{\cR_{p,p'}}}", from=3-1, to=1-1]
	\arrow["{{\cL_{p,p'}}}"', from=3-1, to=5-1]
	\arrow[from=3-2, to=1-1]
	\arrow[from=3-2, to=3-1]
	\arrow[from=3-2, to=3-3]
	\arrow[from=3-2, to=5-3]
	\arrow[from=3-3, to=1-3]
	\arrow[from=3-3, to=5-3]
	\arrow[from=3-4, to=1-5]
	\arrow[from=3-4, to=3-3]
	\arrow[from=3-4, to=3-5]
	\arrow[from=3-4, to=5-3]
	\arrow[from=3-5, to=1-5]
	\arrow[from=3-5, to=5-5]
	\arrow[from=5-3, to=5-1]
	\arrow[from=5-3, to=5-5]
\end{tikzcd}
    \end{center}
    If $p'\Cap r'=\emptyset$, then \ref{geom:saturate} allows us to pass to a similar case. Now suppose $q\leq p'$. The diagram below commutes.
    \[
    \begin{tikzcd}[column sep=1em, row sep=1em]
	{K(p)\boxtimes L} && {K(p)\boxtimes L(p')} && {K\boxtimes L(p')} \\
	&& {K(p)\boxtimes L(q)} \\
	&&&& {K\boxtimes L(q)}
	\arrow["{{\cR_{p,p'}}}"', from=1-3, to=1-1]
	\arrow["{{\cL_{p,p'}}}", from=1-3, to=1-5]
	\arrow["{{\cR_{p,q}}}", from=2-3, to=1-1]
	\arrow["\varepsilon"', from=2-3, to=1-3]
	\arrow["{{\cL_{p,q}}}", from=2-3, to=3-5]
	\arrow["\iota"', from=3-5, to=1-5]
\end{tikzcd}
    \]
    By Proposition \ref{prop:one-sided-inclusions-intertwine}, $\iota$ intertwines $A_r$. Thus,
    \[
    (\cR_{p,q})^{-1}x\cR_{p,q}=\varepsilon^{-1}(\cR_{p,p'})^{-1}x\cR_{p,p'}\varepsilon=\varepsilon^{-1}(\cL_{p,p'})^{-1}x\cL_{p,p'}\varepsilon=(\cL_{p,q})^{-1}\iota^{-1}x\iota \cR_{p,q}=(\cL_{p,q})^{-1}x\cL_{p,q} .
    \]
\end{proof}

\begin{corollary}
    The canonical evaluation maps (and thus the inclusion maps for two-sided Connes fusion) intertwine the action of the net of algebras $\cA$.
\end{corollary}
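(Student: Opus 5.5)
The claim is that, once the action of $\cA$ on $K(p)\boxtimes L(q)$ is defined through $\cR_{p,q}$ (equivalently $\cL_{p,q}$) by the Definition-Proposition, the evaluation maps $\cR_{p,q}$ and $\cL_{p,q}$ intertwine $\cA$, and hence so do the inclusion maps between two-sided fusions. I will first handle the evaluation maps, then factor each two-sided inclusion through evaluation maps and one-sided inclusions, which are already known to intertwine.

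\textbf{Evaluation maps.} For $x\in A_r$ the action on $K(p)\boxtimes L(q)$ is by definition $(\cR_{p,q})^{-1}x\cR_{p,q}$. Hence
\[
\cR_{p,q}\,\bigl((\cR_{p,q})^{-1}x\cR_{p,q}\bigr)=x\,\cR_{p,q},
\]
so $\cR_{p,q}$ intertwines $A_r$ on the nose. The Definition-Proposition gives $(\cR_{p,q})^{-1}x\cR_{p,q}=(\cL_{p,q})^{-1}x\cL_{p,q}$, so the same computation shows that $\cL_{p,q}$ intertwines $A_r$. Since $r\in\cP$ is arbitrary, both evaluation maps intertwine all of $\cA$.

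\textbf{Inclusion maps.} Take $p_1\le p_2$ and $q_1\le q_2$ with $p_2,q_2$ disjoint, so every pair $(p_i,q_j)$ is disjoint and all the relevant fusions are defined. Let $\epsilon\colon K(p_1)\boxtimes L(q_1)\to K(p_2)\boxtimes L(q_2)$ be the inclusion. It suffices to treat the two elementary inclusions in which only one index is enlarged, since a general $\epsilon$ is a composite of these.

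First enlarge $q$ with $p$ fixed, i.e.\ $p_1=p_2=p$. The square
\[
\cR_{p,q_2}\circ\epsilon=\cR_{p,q_1},
\]
with both sides landing in $K(p)\boxtimes L$, commutes on simple tensors $f\otimes\xi\otimes g$. Therefore
\[
\epsilon=(\cR_{p,q_2})^{-1}\cR_{p,q_1}
\]
is a composite of intertwiners.

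Next enlarge $p$ with $q$ fixed. Now the square involving $\cR$ commutes with the one-sided inclusion $K(p_1)\boxtimes L\to K(p_2)\boxtimes L$ in place of the identity, namely
\[
\cR_{p_2,q}\circ\epsilon=\iota\circ\cR_{p_1,q}.
\]
This inclusion $\iota$ intertwines $\cA$ by Proposition~\ref{prop:one-sided-inclusions-intertwine}, so $\epsilon=(\cR_{p_2,q})^{-1}\,\iota\,\cR_{p_1,q}$ is again a composite of intertwiners. Alternatively, $\cL$ with identity on the right factor handles this case directly.

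\textbf{Density and extension.} Each identity above is checked on simple tensors, which are dense by Lemma~\ref{lemma:dense-subsets-of-Connes-fusion}. The maps are unitaries by Lemma~\ref{lemma:inclusions-induce-unitaries}, so the identities extend by continuity.

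No serious obstacle is expected, since the main difficulty was already absorbed by the Definition-Proposition. The only point to watch is that the factorization squares commute on the nose, rather than only up to the unitaries of Lemma~\ref{lemma:inclusions-induce-unitaries}. This reduces to the definitions $f\otimes\xi\otimes g\mapsto f\otimes g(\xi)$ together with the fact that inclusions act as the identity on representatives.
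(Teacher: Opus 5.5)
Your proposal is correct and is essentially the paper's (implicit) argument. The evaluation maps intertwine by the very definition of the action in the Definition-Proposition. The two-sided inclusions factor through evaluation maps and one-sided inclusions as in the commuting diagram of Lemma~\ref{lemma:inclusions-induce-unitaries}, so Proposition~\ref{prop:one-sided-inclusions-intertwine} finishes the job; your alternative $\cL_{p_2,q}\circ\epsilon=\cL_{p_1,q}$ handles the enlargement of $p$ even more directly.
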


Having defined the actions on the underlying fusion of Hilbert spaces, we can now prove that these representations yield a superselection sector.

\begin{proposition}
    Suppose $(K,\pi),(L,\sigma)\in\SSS$ and $p,q\in {\cP}$ are disjoint. Then the Connes fusion $(K(p)\boxtimes L(q),\pi(p)\boxtimes \sigma(q))$ is a superselection sector.
\end{proposition}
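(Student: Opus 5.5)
We need to verify the three axioms for the family $\rho_r := (\pi(p)\boxtimes\sigma(q))|_{A_r}$, $r\in\cP$, on $K(p)\boxtimes L(q)$. By the Definition-Proposition, $\rho_r(x)=\cR_{p,q}^{-1}(\pi(p)\boxtimes\sigma)(x)\cR_{p,q}$ with $\cR_{p,q}$ unitary. So every property that is invariant under unitary conjugation can be checked on the left-sided fusion $K(p)\boxtimes L$ with the action from Proposition~\ref{prop:action-from-a-and-d}. For each fixed $r$, that action is $\alpha_*^{-1}x\alpha_*$ (or $\beta,\gamma,\delta$) for a short zig-zag from $p$ to $r$ or $r'$. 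Each $\rho_r$ is therefore unitarily conjugate to the natural action of $A_r$ (or $A_{r}\subset A_{r''}$) on some $K(s)\boxtimes L$ from Remark~\ref{remark:natural-actions}, where $s=a,b,c,d$ with $r\le s$ or $r\le s'$. That natural action is either $x\mapsto \pi_s(x)\otimes 1$ or $y\mapsto 1\otimes\sigma_{s'}(y)$ on the relative tensor product. In either case it is a normal $*$-representation: normality follows because it is an amplification/induction of the normal representations $\pi_s$ or $\sigma_{s'}$, so each $\rho_r$ is a normal $*$-homomorphism.

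\textbf{Isotony.} Take $r_1\le r_2$ and $x\in A_{r_1}$. By Proposition~\ref{prop:action-from-a-b-c-d}, the action is independent of the choice of short zig-zag. Choose $a\ge p,r_2$ if it exists; then $a\ge r_1$ as well, so the same $\alpha_*$ computes both $\rho_{r_1}(x)$ and $\rho_{r_2}(x)$, using that $K(a)\boxtimes L$ carries the natural action of $A_a\supset A_{r_2}\supset A_{r_1}$. Otherwise, Lemma~\ref{lemma:saturation-diagonal-pair} gives $d\le p,r_2'\le r_1'$, and $\delta$ again serves for both. So $\rho_{r_2}|_{A_{r_1}}=\rho_{r_1}$.

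\textbf{Locality.} Let $r,t$ be disjoint. The plan is to transport both actions to a single space where they are visibly commuting natural actions. This is the step I expect to be the main obstacle, because it needs an $s$ with $r\le s$ and $t\le s'$, and Proposition~\ref{prop:one-sided-inclusions-intertwine} to move the base point from $p$ to $s$. By Proposition~\ref{prop:one-sided-inclusions-intertwine}, any zig-zag map $K(p)\boxtimes L\to K(s)\boxtimes L$ (connectedness of $\cP$) intertwines the full net action, so it suffices to prove commutation on $K(s)\boxtimes L$. There $A_r$ acts via $\pi_s$ on the left tensor factor and $A_t\subset A_{s'}$ acts via $\sigma_{s'}$ on $L$; these commute trivially. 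To produce $s$, I would first try $s=t'$, which works only when $r\le t'$. For general disjoint $r,t$, I would split $r$ and $t$ and use \ref{geom:saturate} together with \ref{geom:qSmallqIndicator}: the pieces $\tilde r\le r$ that are $t$-indicators either satisfy $\tilde r\le t'$, which is the good case, or $\tilde r\le t$, which is impossible by disjointness. Since $A_r$ should be generated by such pieces, the case would reduce to these. If this reduction fails, I would fall back on the locality already built into the net: the natural action on $K(t')\boxtimes L$ restricted to $A_r$ is $\alpha_*$-conjugate to $\pi$-type actions, and I would compare it via zig-zags through elements above $r$ disjoint from $t$. In the paper's intended geometric setting, locality is presumably stated only for $r\le t'$, in which case $s=t'$ suffices directly.

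\textbf{Absorbing.} Each $\rho_r$ is injective, since it is unitarily conjugate to $x\mapsto\pi_s(x)\otimes 1$ (or the $\sigma$ analogue) with $\pi_s$ injective and the fusion space nonzero. Injectivity is equivalent to absorbing because $A_r$ is properly infinite (\cite[Lemmas~2.11, 2.18]{MR4927814}). Injectivity of $\pi_s\otimes1$ itself follows because $K(s)\boxtimes L\cong K$ via a unitary $g\in\Hom_{A_{s}}(H,L)$ intertwining the relevant actions, or more simply because $K(s)\boxtimes L(s')\cong K\boxtimes L(s')$ contains $K$ tensored with a nonzero multiplicity space on which $A_s$ acts by $\pi_s$.
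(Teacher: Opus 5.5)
Your proof is correct, and for locality it takes a genuinely different route from the paper.

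\textbf{Locality.} The paper transports both actions along a \emph{mutually disjoint} zig-zag from $(p,q)$ to $(r,t)$ inside the two-sided fusion. It lands in $K(r)\boxtimes L(t)$, where $A_r$ and $A_t$ act on different tensor factors. This requires the extra geometric input that such zig-zags exist between the two pairs.

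You instead stay in the one-sided fusion and use an ordinary zig-zag from $p$ to $t'$, which exists by connectedness of $\cP$. By Proposition~\ref{prop:one-sided-inclusions-intertwine} it intertwines the whole net. On $K(t')\boxtimes L$, the algebra $A_r\subseteq A_{t'}$ acts on the Hom factor via $\pi_{t'}$ and $A_t$ acts on $L$ via $\sigma_t$, so the commutation is immediate.

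Your route needs only connectedness and results already proved in the paper. The paper's route keeps the symmetric two-sided picture that it also uses for the braiding.

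\textbf{Isotony, absorbing, normality.} Your isotony argument is a direct verification where the paper only gestures at mutually disjoint zig-zags: a single $a\geq p,r_2$, or a single $d\leq p,r_2'\leq r_1'$, computes both $\rho_{r_1}$ and $\rho_{r_2}$. Your absorbing argument is equivalent to the paper's inner-product computation. Each $\rho_r$ is conjugate to a natural action, and $f\otimes\eta\mapsto f(g^*\eta)$, for a unitary $g\in\Hom_{A_s}(H,L)$, identifies $K(s)\boxtimes L$ with $K$ carrying $\pi_s$. You also address normality, which the paper leaves implicit.

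\textbf{Two clean-ups.} First, delete the hedge about general disjoint $r,t$. The reduction via pieces $\tilde r\le r$ relies on $A_r$ being generated by the $A_{\tilde r}$. That is an additivity property the net is not assumed to have, so the reduction would not go through.

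It is also unnecessary. The operative meaning of ``disjoint'' in this section is $t\le r'$, for two reasons. The isometry of $\cR_{p,q}$ in Lemma~\ref{lemma:canonical-evaluation-is-unitary} uses that $g\in\Hom_{A_{q'}}(H,L)$ intertwines $A_p$, i.e.\ $A_p\subseteq A_{q'}$. And mutually disjoint zig-zags are only defined for pairs with $p_i\le q_i'$. That case is exactly what your choice $s=t'$ handles, and it is the same case the paper's proof covers.

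Second, in your opening paragraph only $s\in\{a,d\}$ is needed. For the $\beta$ and $\gamma$ zig-zags the natural action lives on $K(r)\boxtimes L$ or $K(r')\boxtimes L$, not on $K(b)\boxtimes L$ or $K(c)\boxtimes L$. By Lemma~\ref{lemma:saturation-diagonal-pair}, one of $a$ or $d$ always exists.
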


\begin{proof}
    \item[\underline{Isotony:}] For $r\leq p$, $(\pi(p)\boxtimes\sigma(q))_p|_{A_r}=(\pi(p)\boxtimes\sigma(q))_r$ and similarly for $r\leq q$. The result follows by transporting along unitaries induced by mutually disjoint zig-zags.
    
    \item[\underline{Locality:}] Suppose $r,s\in{\cP}$ are disjoint. Take a mutually disjoint zig-zag $\zeta$ from $(p,q)$ to $(r,s)$. Then $(\pi(p)\boxtimes\sigma(q))_r=\Ad(\zeta_*)\circ (\pi(r)\boxtimes\sigma(s))_r$ and $(\pi(p)\boxtimes\sigma(q))_s=\Ad(\zeta_*)\circ (\pi(r)\boxtimes\sigma(s))_s$. Moreover, $[(\pi(r)\boxtimes\sigma(s))_r(A_r),(\pi(r)\boxtimes\sigma(s))_s(A_s)]=0$, so $[(\pi(p)\boxtimes\sigma(q))_r(A_r),(\pi(p)\boxtimes\sigma(q))_s(A_s)]=0$.
    
    \item[\underline{Absorbing:}] It suffices to show that $(\pi(p)\boxtimes\sigma(q))_p$ and $(\pi(p)\boxtimes\sigma(q))_q$ are injective, since the actions of the other algebras are unitary conjugations of such actions. Suppose $x\in \ker (\pi(p)\boxtimes\sigma(q))_p$. Then for all $f\otimes\xi\otimes g\in K(p)\boxtimes L(q)$,
    \begin{align*}
        0&=\langle \pi_p(x)f\otimes\xi\otimes g|\pi_p(x)f\otimes\xi\otimes g\rangle
        =\langle f^* \pi_p(x^*x)fg^*g\xi|\xi\rangle
        =\langle\sigma_p(f^* \pi_p(x^*x)f)g\xi|g\xi\rangle \\
        &=\lVert\sigma_p((f^* \pi_p(x^*x)f)^{1/2})g\xi\rVert^2.
    \end{align*}
    Since there exist unitaries in $\Hom_{A_{q'}}(H,L)$, it follows that $\sigma_p((f^* \pi_p(x^*x)f)^{1/2})=0$. But $\sigma_p$ is injective, so $f^* \pi_p(x^*x)f=0$. Thus, $\pi_p(x)f=0$, but there exist unitaries in $\Hom_{A_{p'}}(H,K)$, so $\pi_p(x)=0$, and hence $x=0$ since $\pi_p$ is injective. A similar proof shows that $(\pi(p)\boxtimes\sigma(q))_q$ is injective.
\end{proof}
\begin{remark}
Since the canonical evaluation maps are unitary intertwiners, it follows that the one-sided Connes fusion of two superselection sectors is also a superselection sector.
\end{remark}

The following lemma describes the horizontal composition of morphisms in $\SSS$.

\begin{lemma}
    The two-sided and one-sided Connes fusions are linear $*$-bifunctors.
\end{lemma}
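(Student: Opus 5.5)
We first define the functor on morphisms. For $S\colon (K,\pi)\to (K_1,\pi^1)$ and $T\colon (L,\sigma)\to(L_1,\sigma^1)$ in $\SSS$, set
\[
(S\boxtimes T)(f\otimes\xi\otimes g):=Sf\otimes\xi\otimes Tg
\]
on the algebraic tensor product $\Hom_{A_{p'}}(H,K)\otimes H\otimes\Hom_{A_{q'}}(H,L)$. Post-composition with $S$ preserves $A_{p'}$-linearity because $S$ intertwines $\pi_{p'}$ and $\pi^1_{p'}$, so $Sf\in\Hom_{A_{p'}}(H,K_1)$; the same holds for $Tg$. For the one-sided fusions we define $f\otimes\eta\mapsto Sf\otimes T\eta$ on $K(p)\boxtimes L$, and analogously on $K\boxtimes L(q)$. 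Once this is well defined, functoriality ($\id\boxtimes\id=\id$, compatibility with composition) and bilinearity are immediate on simple tensors. By density and continuity they extend to the completions.

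The main step is boundedness. We handle the two variables separately, using $S\boxtimes T=(S\boxtimes\id)(\id\boxtimes T)$ on simple tensors.

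For $T$ acting on the second variable of $K(p)\boxtimes L$, take $\zeta=\sum_i f_i\otimes\eta_i$. Then
\[
\|(\id\boxtimes T)\zeta\|^2=\sum_{i,j}\langle \sigma^1_p(f_j^*f_i)T\eta_i|T\eta_j\rangle=\langle (T^*T)^{(n)}\,\sigma^{(n)}_p(F)\vec\eta|\vec\eta\rangle,
\]
where $F=[f_j^*f_i]\in M_n(A_p)$ is positive. Since $T$ intertwines $\sigma_p$, the operator $(T^*T)^{(n)}$ commutes with $\sigma_p^{(n)}(F)^{1/2}$, so this quantity is at most $\|T\|^2\langle\sigma^{(n)}_p(F)\vec\eta|\vec\eta\rangle=\|T\|^2\|\zeta\|^2$.

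For $S$ acting on the first variable, the Gram matrix becomes $[f_j^*S^*Sf_i]$. We write it as $\Phi^*(S^*S\otimes 1_n)\Phi$, where $\Phi$ is the column $(f_i)$. Then $\Phi^*(S^*S\otimes 1_n)\Phi\le\|S\|^2\,\Phi^*\Phi=\|S\|^2 F$ in $M_n(A_p)$. Applying the positive map $\sigma^{(n)}_p$ to this inequality gives the bound $\|S\|^2\|\zeta\|^2$. This bound also shows that the map respects the null space, so $S\boxtimes T$ is well defined on the completion with $\|S\boxtimes T\|\le\|S\|\|T\|$.

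The two-sided fusion is handled the same way, or by conjugating with the unitaries $\cR_{p,q},\cL_{p,q}$ of Lemma~\ref{lemma:canonical-evaluation-is-unitary}; one checks on simple tensors that these unitaries carry $S\boxtimes T$ on one model to $S\boxtimes T$ on the other.

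Next we verify that $(S\boxtimes T)^*=S^*\boxtimes T^*$. This follows from
\[
\langle Sf_1\otimes T\eta_1|f_2\otimes\eta_2\rangle=\langle\sigma_p(f_2^*Sf_1)T\eta_1|\eta_2\rangle=\langle \sigma_p\big((S^*f_2)^*f_1\big)\eta_1|T^*\eta_2\rangle,
\]
where we use that $T$ intertwines $\sigma_p$.

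The remaining point, and the main obstacle, is that $S\boxtimes T$ is a morphism in $\SSS$, i.e.\ that it intertwines the full action of $\cA$ constructed in Proposition~\ref{prop:action-from-a-and-d}, not just the natural actions of Remark~\ref{remark:natural-actions}. For the natural actions of $A_p$ and $A_{p'}$ on $K(p)\boxtimes L$ this is clear on simple tensors. For a general $A_q$, the action is $\alpha_*^{-1}x\alpha_*$ along a short zig-zag. The zig-zag unitaries are built from inclusion maps, and these visibly commute with $S\boxtimes T$ on simple tensors (inclusions do not change $f$ or $\eta$). Hence $S\boxtimes T$ commutes with $\alpha_*$. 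It also commutes with the natural actions on $K(a)\boxtimes L$ for $q\le a$, and therefore with the transported action of $A_q$. The right-sided and two-sided fusions follow by transporting through the evaluation unitaries.
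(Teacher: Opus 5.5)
Your proposal is correct and follows essentially the paper's argument: define $S\boxtimes T$ on simple tensors, bound it via the Gram-matrix inequality $[f_j^*S^*Sf_i]\le\|S\|^2[f_j^*f_i]$ in $M_n(A_p)$ together with the fact that $T^*T$ commutes with $\sigma_p(A_p)$, and deduce that it intertwines all of $\cA$ from its compatibility with the inclusion maps and with the natural actions of $A_p$ and $A_{p'}$. The differences are cosmetic: you split the estimate into the two variables and let the bound itself take care of the null space, whereas the paper checks the $A_p$-balancing relation directly. Two small corrections: your $\Phi$ should be the row $(f_1\ \cdots\ f_n)\colon H^n\to K$, giving $\Phi^*S^*S\Phi$ with no $\otimes 1_n$, and when only $d\le p,q'$ exists you should transport along $\delta$ and use the natural action of $A_{d'}\supseteq A_q$ on the second factor.
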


\begin{proof}
    We give a proof for the one-sided left Connes fusion (the other cases are similar). Given $p\in\cP$, $(K_j,\pi^j),(L_j,\sigma^j)\in\SSS$ for $j=1,2$ and $F\in\Hom_A(K_1,K_2)$, $G\in\Hom_A(L_1,L_2)$, define $F\boxtimes_p G:\Hom_{A_{p'}}(H,K_1)\otimes L_1\to K_2(p)\boxtimes L_2$ by
    \[
    (F\boxtimes_pG)(f\otimes \eta)=F\circ f\otimes_{A_p}\! G(\eta).
    \]
    For $x\in A_p$,
    \[
    (F\boxtimes_pG)(fx\otimes \eta)=Ffx\otimes_{A_p}\! G(\eta)=Ff\otimes_{A_p}\!\sigma^2_p(x)G(\eta)=Ff\otimes_{A_p}\! G(\sigma^1_p(x)\eta)=(F\boxtimes_pG)(f\otimes x\cdot\eta)
    \]
    so $F\boxtimes_p G$ descends to a map on the relative tensor product space $\Hom_{A_{p'}}(H,K_1)\otimes_{A_p}\! L_1$.  
    
    Moreover, $F\boxtimes_p G$ is bounded: for $\zeta=\sum_{i=1}^n f_i\otimes_{A_p}\! \eta_i\in \Hom_{A_{p'}}(H,K_1)\otimes_{A_p}\! L_1$, consider the matrix $X=(f_i^*f_j)\in M_n(B(H))\cong B(H^n)$. Let $\sigma^1_p(X)=(\sigma^1_p(f_i^*f_j))\in M_n(B(L_1))$ be the entrywise application of $\sigma^1_p$ on $X$. Letting $\eta=(\eta_1,\dots,\eta_n)^T$, we get 
    \[
    \lVert\zeta\rVert^2=\sum_{i,j=1}^n\langle f_j\otimes\eta_j|f_i\otimes\eta_i\rangle=\sum_{i,j=1}^n\langle \sigma_p^1(f_i^*f_j)\eta_j|\eta_i\rangle=\langle \sigma_p^1(X)\eta|\eta\rangle.
    \]
    Next, let $Y=(f_i^*F^*Ff_j)\in M_n(B(H))$ and $\chi=(G(\eta_1),\dots,G(\eta_n))^T$. Then $\lVert(F\boxtimes_p G)(\zeta)\rVert^2=\langle\sigma^2_p(Y)\chi|\chi\rangle$. Now, $Y\leq\lVert F\rVert^2X$, so $\sigma^2_p(Y)\leq\lVert F\rVert^2\sigma^2_p(X)$ since $\sigma^2_p$ is a $*$-homomorphism (and hence completely positive). Since $(G^*G)\Id_{L_1^n}$ and $\sigma^1_p(X)$ are two commuting positive operators, $G^*G\sigma^1_p(X)\leq\lVert G\rVert^2\sigma^1_p(X)$, so
    \[
    \lVert(F\boxtimes_p G)(\zeta)\rVert^2=\langle\sigma^2_p(Y)\chi|\chi\rangle\leq \lVert F\rVert^2\langle\sigma^2_p(X)\chi|\chi\rangle=\lVert F\rVert^2\langle G^*G\sigma^1_p(X)\eta|\eta\rangle\leq\lVert F\rVert^2\lVert G\rVert^2\lVert\zeta\rVert^2.
    \]
    Hence, $F\boxtimes_p G$ is bounded, and so extends to a map $K_1(p)\boxtimes L_1\to K_2(p)\boxtimes L_2$. Moreover, $F\boxtimes_p G$ intertwines the actions of $A_p$ and $A_{p'}$, and is natural in $p$: if $p\leq q$, then the diagram below commutes.
    \[\begin{tikzcd}[column sep=1.5em, row sep=.75em]
	{K_1(p)\boxtimes L_1} && {K_2(p)\boxtimes L_2} \\
	\\
	{K_1(q)\boxtimes L_1} && {K_2(q)\boxtimes L_2}
	\arrow["{F\boxtimes_p G}", from=1-1, to=1-3]
	\arrow[from=1-1, to=3-1]
	\arrow[from=1-3, to=3-3]
	\arrow["{F\boxtimes_q G}"', from=3-1, to=3-3]
\end{tikzcd}\]
Thus, $F\boxtimes_p G$ intertwines $A$. It is now easy to verify that Connes fusion is a linear $*$-bifunctor.
\end{proof}

Since we have described the Connes fusion as a bifunctor on $\SSS$, all that remains to prove that $(\SSS, \boxtimes_p)$ is a monoidal category is to provide associators and unitors. 
We will do this for the right-sided Connes fusion, since we will later prove an equivalence between $\mathsf{SSS}_p$ under composition and $\mathsf{SSS}$ under right-sided Connes fusion over $p$. Left-sided Connes fusion gives an equivalent monoidal product (see Remark \ref{remark:left-right-monoidal-equivalence}).

\begin{proposition}\label{prop:sss-monoidal}
    $\SSS$ is a monoidal category under right-sided Connes fusion with associator 
    \begin{align*}
        \alpha_{A, B, C} \colon (A \boxtimes B(p)) \boxtimes C(p) &\to A \boxtimes (B \boxtimes C(p))(p) \\
        (\eta_a \otimes g_b) \otimes g_c &\mapsto \eta_a \otimes (g_B(-) \otimes g_c)
    \end{align*}
    and unitors
\begin{align*}
H \boxtimes K (p) &\overset{\lambda_K}{\longrightarrow} K
&&
\text{and}
&
K \boxtimes H (p) &\overset{\rho_K}{\longrightarrow} K 
\\
\eta \otimes f &\longmapsto {f(\eta)} 
&&& {\xi \otimes g} &\longmapsto {\pi_p(g)\xi}.
\end{align*}
    \end{proposition}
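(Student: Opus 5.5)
The plan is to verify four things in turn: the associator and unitors are well defined and bounded; they are unitary; they intertwine the action of the net $\cA$; and they satisfy the pentagon and triangle axioms together with naturality. Naturality in each variable will be immediate from the formulas, since morphisms act by postcomposition on $g$'s and by application on vectors.

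\emph{The unitors.} For $\lambda_K$, the map $\eta\otimes f\mapsto f(\eta)$ on $H\otimes_{A_p}\Hom_{A_{p'}}(H,K)$ respects the relation $\eta\otimes fy=y\eta\otimes f$. It is isometric because
\[
\langle g_2^*g_1\eta_1|\eta_2\rangle=\langle g_1\eta_1|g_2\eta_2\rangle .
\]
It is surjective because $\Hom_{A_{p'}}(H,K)$ contains a unitary (absorbing property). This is the same argument as Lemma~\ref{lemma:canonical-evaluation-is-unitary}. For $\rho_K$, note that $g\in\Hom_{A_{p'}}(H,H)=A_{p'}'=A_p$ by Haag duality, so $\pi_p(g)$ makes sense. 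Isometry is
\[
\langle\pi_p(g_2^*g_1)\xi_1|\xi_2\rangle=\langle \pi_p(g_1)\xi_1|\pi_p(g_2)\xi_2\rangle,
\]
and surjectivity follows by taking $g=1$.

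Intertwining for the unitors is checked only on $A_p$ and $A_{p'}$, where the actions are the natural ones of Remark~\ref{remark:natural-actions}. For $\lambda_K$: $A_{p'}$ acts on $\eta$ and commutes past $f$, and $A_p$ acts by postcomposition on $f$. For $\rho_K$: $A_{p'}$ acts via $\pi_{p'}$ on $\xi$ and commutes with $\pi_p(g)$ by locality, and $A_p$ acts by $g\mapsto yg$. To pass from $A_p,A_{p'}$ to all $A_q$, I transport along the short zig-zags of Proposition~\ref{prop:action-from-a-b-c-d}: the inclusions $K\boxtimes L(r)\to K\boxtimes L(s)$ for $r\le s$ are intertwiners, and the unitors are compatible with these inclusions. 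The same reduction is used for the associator.

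\emph{The associator.} For $\eta_a\in A$ and $g_b\in\Hom_{A_{p'}}(H,B)$, the element $g_b(-)\otimes g_c$ defines a bounded map $H\to B\boxtimes C(p)$, namely $\xi\mapsto g_b(\xi)\otimes g_c$. I will show this map is $A_{p'}$-linear. Using Remark~\ref{remark:natural-actions}, $A_{p'}$ acts on $B\boxtimes C(p)$ through the first factor via $\beta_{p'}$, and $g_b$ intertwines $A_{p'}$, so the map lies in $\Hom_{A_{p'}}(H,B\boxtimes C(p))$. Next, $\alpha_{A,B,C}$ respects the balancing relations, i.e.\ $\eta\otimes g_by\mapsto\eta\otimes(g_b(y-)\otimes g_c)$ and $\pi_p(y)\eta\otimes g_b$, and similarly in $g_c$. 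The key computation is preservation of inner products:
\[
(g_{b,2}(-)\otimes g_{c,2})^*(g_{b,1}(-)\otimes g_{c,1})=g_{c,2}^*\,\beta\text{-}\cdots
\]
which, computed in $B(H)$, equals $g_{b,2}^*\,\beta_p(g_{c,2}^*g_{c,1})\,g_{b,1}$. This matches the iterated inner product
\[
\langle \pi_p(g_{c,2}^*g_{c,1})\,\pi_p(g_{b,2}^*g_{b,1})\cdots\rangle
\]
once we rewrite it using that $\pi_p(g_{b,2}^*\beta_p(x)g_{b,1})$ is the relevant expression. To keep this clean, I will work on the dense set of Lemma~\ref{lemma:dense-subsets-of-Connes-fusion}. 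For surjectivity, the image contains $\eta\otimes(u(-)\otimes g_c)$ with $u$ unitary, and the maps $u(-)\otimes g_c$ generate $\Hom_{A_{p'}}(H,B\boxtimes C(p))$ as a right $A_p$-module in the strong-bounded sense. Intertwining is checked on $A_p$ (acting on $g_c$) and $A_{p'}$ (acting on $\eta_a$), then transported as above.

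\emph{Pentagon and triangle.} Both are verified on elementary tensors, where the two composites reduce to the same nested expression. For the triangle, note that $g_b(-)\otimes f$ composed with $\lambda$ equals $f\circ g_b$, which agrees with $\rho$ applied through $\pi_p$ after using the relation $\xi\otimes g=\pi_p(g)\xi\otimes 1$.

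\emph{Main obstacle.} I expect the hardest step to be surjectivity of the associator: showing that maps of the form $g_b(-)\otimes g_c$ span a strongly dense subspace of $\Hom_{A_{p'}}(H,B\boxtimes C(p))$. My first attempt will go via the unitary $\cR$-type identification, composing with a unitary $H\cong B$ of $A_{p'}$-modules to reduce to $\Hom_{A_{p'}}(H,C)$-type spaces. If that fails, I will instead use the general fact that any such hom factors through $H$ using absorbing unitaries.
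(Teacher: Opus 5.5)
Your proof is correct, but it reaches unitarity and coherence by a different route than the paper.

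\textbf{How the paper argues.} The paper never computes inner products for $\alpha_{A,B,C}$ and never proves surjectivity by hand. It places $\alpha_{A,B,C}$ in a square whose vertical sides are the canonical evaluation unitaries of Lemma~\ref{lemma:canonical-evaluation-is-unitary}: $\cL_{p',p}$, $\cR_{p',p}$, and these tensored with $\id_{C(p)}$. This makes $\alpha_{A,B,C}$ conjugate to the reparenthesization map $(A(p')\boxtimes B)\boxtimes C(p)\to A(p')\boxtimes(B\boxtimes C(p))$. Both ends of that map complete the same three-fold tensor product $\Hom_{A_p}(H,A)\otimes B\otimes\Hom_{A_{p'}}(H,C)$, with forms that agree by locality, so unitarity comes for free. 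Intertwining follows because the reparenthesization intertwines $A_{p'}$ and $A_p$ and is natural in disjoint pairs. Coherence is then primarily lifted from the strict category $(\SSS_p,\circ_p)$ through Proposition~\ref{prop:monoidal-equivalence-to-localized-endomorphisms}. The elementwise pentagon and triangle check you propose appears only as the alternative in Appendix~\ref{app:pentagon-triangle}.

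\textbf{How your route compares.} Your version has four ingredients: the explicit formula $G_2^*G_1=g_{b,2}^*\beta_p(g_{c,2}^*g_{c,1})g_{b,1}$ (with your $\beta$ for the representation on $B$), a separate surjectivity argument, intertwining checked on $A_p$ and $A_{p'}$, and transport along $p\le a\ge q$ or $p\ge d\le q'$ using naturality of $\alpha$ in $p$. It is self-contained and independent of $\SSS_p$ and of two-sided fusion. The paper's version avoids both the surjectivity step and the pentagon computation, at the cost of relying on those other structures.

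\textbf{Small points to fix.}
\begin{enumerate}
\item The domain form is exactly $\langle\pi_p(g_{b,2}^*\beta_p(g_{c,2}^*g_{c,1})g_{b,1})\eta_1|\eta_2\rangle$, because $A_p$ acts on $A\boxtimes B(p)$ by postcomposing $g_b$ with $\beta_p$. The product $\pi_p(g_{c,2}^*g_{c,1})\pi_p(g_{b,2}^*g_{b,1})$ you wrote is wrong, since $g_b$ intertwines $A_{p'}$, not $A_p$. No rewriting is needed: the two forms agree on the nose.
\item Your ``main obstacle'' is easy. Take unitaries $u\in\Hom_{A_{p'}}(H,B)$ and $w\in\Hom_{A_{p'}}(H,C)$, and set $U=u(-)\otimes w$. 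This is a unitary in $\Hom_{A_{p'}}(H,B\boxtimes C(p))$. It is isometric since $U^*U=u^*\beta_p(w^*w)u=1$. It is onto since every $\eta\otimes g=\beta_p(w^*g)\eta\otimes w$ lies in its range. Then for any $G$ one has $\eta\otimes G=\pi_p(U^*G)\eta\otimes U=\alpha_{A,B,C}\bigl((\pi_p(U^*G)\eta\otimes u)\otimes w\bigr)$, using $U^*G\in A_p$ by Haag duality.
\item In the transport step for the associator, the comparison maps also change the inner fusion. They are therefore composites of inclusions with $\iota\boxtimes\id_C$ or $\id_A\boxtimes\iota'$. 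You should cite the bifunctoriality lemma to see that these intertwine $\cA$.
\end{enumerate}
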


\begin{proof}
    The map $\alpha_{A,B,C}$ is, a priori, only defined upon a dense subspace. It may be extended by continuity to a unitary intertwiner in $\SSS$ as the following diagram commutes, where the bottom horizontal map is reparenthesization.
    
    \[\begin{tikzcd}[column sep=1em, row sep=1.5em]
	{(A \boxtimes B(p)) \boxtimes C(p)} && {A \boxtimes (B \boxtimes C(p))(p)} \\
	\\
	{(A(p') \boxtimes B(p)) \boxtimes C(p)} && {A(p') \boxtimes (B \boxtimes C(p))(p)} \\
	\\
	{(A(p') \boxtimes B) \boxtimes C(p)} && {A(p') \boxtimes(B \boxtimes C(p))}
	\arrow["{\alpha_{A, B, C}}", from=1-1, to=1-3]
	\arrow["{\cL_{p', p} \boxtimes \id_{C(p)}}", from=3-1, to=1-1]
	\arrow["{\cR_{p', p} \boxtimes \id_{C(p)}}"', from=3-1, to=5-1]
	\arrow["{\cL_{p', p}}"', from=3-3, to=1-3]
	\arrow["{\cR_{p', p}}", from=3-3, to=5-3]
	\arrow[from=5-1, to=5-3]
\end{tikzcd}\]
The unitors are clearly unitary intertwiners. 
As the associator and unitors are manifestly natural, the monoidal equivalence in Proposition \ref{prop:monoidal-equivalence-to-localized-endomorphisms} lifts the monoidal coherence of $(\SSS_p, \circ_p, H)$.
(Alternatively, the monoidal coherence is directly proven in Appendix~\ref{app:pentagon-triangle}.)
\end{proof}

\begin{remark}\label{remark:left-right-monoidal-equivalence}
Similarly, SSS is a monoidal category under left-sided Connes fusion $\tensor*[_p]{\boxtimes}{}$. One can show that the tensorator $K(p)\boxtimes L\to K(p)\boxtimes L(p')\to K\boxtimes L(p')$ implements a monoidal equivalence between $(\SSS,\tensor*[_p]{\boxtimes}{},H)$ and $(\SSS,\tensor*[]{\boxtimes}{_{p'}},H)$. Moreover, if $p,q\in\cP$ with $p\leq q$, then there is a monoidal equivalence between $(\SSS,\tensor*[]{\boxtimes}{_p},H)$ and $(\SSS,\tensor*[]{\boxtimes}{_q},H)$, with tensorator given by the canonical inclusion map. Since $\cP$ is connected, it follows that any choices of left-sided or right-sided Connes fusion over any $p\in\cP$ yield equivalent monoidal categories. 
\end{remark}

\subsection{Braiding for Connes fusion}

We define a braiding $\beta$ on $(\SSS,\boxtimes_p,H)$ induced by a choice of splitting $(r,s)$ of $p$. 
Given superselection sectors $(K, \pi)$ and $(L,\sigma)$, let $\beta_{K,L}$ be the composition of the following sequence of unitary intertwiners:
\[
K \boxtimes L(p) \leftarrow K \boxtimes L(s) \rightarrow K \boxtimes L(r') \leftarrow K(r) \boxtimes L(r') \cong L(r') \boxtimes K(r) \rightarrow L \boxtimes K(r) \rightarrow L \boxtimes K(p)
\]
That is, for any $\eta\otimes g\in K\otimes\Hom_{A_{s}'}(H,L) $ (which span a dense subset of $K\boxtimes L(p)$) and unitary $u \in \Hom_{A_r'}(H,K)$,
\[
\beta_{K,L}:\eta\otimes g\mapsto u\otimes u^*(\eta)\otimes g\mapsto g\otimes u^*(\eta)\otimes u\mapsto g(u^*(\eta))\otimes u\in L\boxtimes K(p)
\]
In particular, for any $f \in \Hom_{A_r'}(H,K)$, $g \in \Hom_{A_s'}(H,L)$, $\xi \in H$, and unitary $u \in \Hom_{A_r'}(H,K)$,
\[
\beta_{K,L}(f(\xi)\otimes g)=g(u^*f(\xi))\otimes u=\sigma_r(u^*f)g(\xi)\otimes u=g(\xi)\otimes uu^*f=g(\xi)\otimes f,
\]
so that $\beta_{K,L}\circ(f(-)\otimes g)=g(-)\otimes f$. It is clear that swapping the roles of $r$ and $s$ yields the reverse braiding. Moreover, if $(r_1,s_1)$ and $(r_2,s_2)$ are two splittings of $p$ with $r_1\leq r_2$ and $s_1\leq s_2$, then the two braidings they induce agree on a dense subset. Hence, any two splittings of $p$ that are related by a mutually disjoint zig-zag conatined in $p$ give the same braiding.

To show that $\beta$ is indeed a braiding, one can verify the hexagon identities directly, as is done in Appendix \ref{app:hexagon-identities}. Alternatively, it suffices to show that the monoidal equivalence in Proposition \ref{prop:monoidal-equivalence-to-localized-endomorphisms} is compatible with $\beta$ and the braiding $\gamma$ on $\SSS_p$, which is done in Proposition \ref{prop:braided-equivalence-with-localized-enodmorphisms}.

\begin{remark}
    A braiding can similarly be defined on $(\SSS,\tensor*[_p]{\boxtimes}{},H)$. Using the reflection property \cite[(GA5)]{MR4927814} GA5, one can show there is a braided monoidal equivalence between $(\SSS,\tensor*[_p]{\boxtimes}{},H)$ and $(\SSS,\tensor*[]{\boxtimes}{_{p'}},H)$. Moreover, if $p,q\in\cP$ with $p\leq q$, then there is a braided monoidal equivalence between $(\SSS,\tensor*[]{\boxtimes}{_p},H)$ and $(\SSS,\tensor*[]{\boxtimes}{_q},H)$. Since $\cP$ is connected, it follows that any choices of one-sided left or right Connes fusion over any $p\in\cP$ yields equivalent braided monoidal categories (up to reversal of braiding). 
\end{remark}

\section{Equivalence of Superselection Categories}\label{sec:equivalencewithendomorphisms}
In this section, we prove Theorem \ref{thm:A}: $(\SSS, \boxtimes_p)$ is braided equivalent to the category $(\SSS_p, \circ_p, H)$ constructed in \cite{MR4927814}. We first give an overview of the fusion and braiding for $\SSS_p$, the category of superselection sectors localized at $p$, constructed in \cite{MR4927814}. A superselection sector $(K, \pi)$ is \textit{localized} at $p \in \cP$ if $K=H$ (the vacuum Hilbert space) and $\pi_{p'}=\Id_{A_{p'}}$. We will denote $(H,\pi)\in\SSS_p$ by $H_\pi$ or simply by $\pi$. We use $\mathbbm{1}$ to denote the vacuum representation, i.e., $\mathbbm{1}_q=\Id_{A_q}$ for all $q\in{\cP}$. The absorbing property implies that the inclusion functor $\SSS_p\to \SSS$ is essentially surjective.
\par The fusion of $\pi,\sigma\in\mathsf{SSS}_p$ is constructed in \cite{MR4927814} as follows:
by \ref{geom:qSmallqIndicator}, for any $q\in\cP$, there exists a $q$-small $q$-indicator $\widetilde{p}\leq p$. Picking unitaries $u_{\widetilde{p}}\in\Hom_{A_{\widetilde{p}'}}(H,H_\pi)$ and $v_{\widetilde{p}}\in\Hom_{A_{\widetilde{p}'}}(H,H_\sigma)$, we obtain the superselection sectors $\pi^{\widetilde{p}}(-)=u_{\widetilde{p}}^*\pi(-)u_{\widetilde{p}}$ and $\sigma^{\widetilde{p}}(-)=v_{\widetilde{p}}^*\sigma(-)v_{\widetilde{p}}$ localized at $\widetilde{p}$. We then set
\[
(\pi\circ_p\sigma)_q(x)
        :=u_{\widetilde{p}}\pi^{\widetilde{p}}_p(v_{\widetilde{p}})(\pi^{\widetilde{p}}_q\circ\sigma^{\widetilde{p}}_q)(x)\pi^{\widetilde{p}}_p(v_{\widetilde{p}}^*)u_{\widetilde{p}}^*.
\]
It was proved in \cite{MR4927814} that this definition yields a valid superselection sector localized at $p$ and is independent of the choices of $\widetilde{p}$, $u_{\widetilde{p}}$, and $v_{\widetilde{p}}$. Moreover, it was proven that $(\SSS_p,\circ_p)$ is a strict monoidal category with monoidal unit $\mathbbm{1}$.

For a fixed splitting $(r,s)$ of $p$ and $\pi,\sigma\in\SSS_p$, the braiding is given in \cite{MR4927814} by
\[
\gamma_{\pi,\sigma}:=\sigma_p(u_r)v_su_r^*\pi_p(v_s^*)
\]
for any choice of unitaries $u_r\in\Hom_{A_{r'}}(H,H_\pi)\subseteq A_p$ and $v_s\in\Hom_{A_{s'}}(H,H_\sigma)\subseteq A_p$.
It was proven that $\gamma_{\pi,\sigma}$ is independent of the choice of unitaries, that swapping the roles of $r$ and $s$ produces the reverse braiding $\gamma_{\pi,\sigma}^{-1}$, and that $\gamma_{\pi,\sigma}$ is independent of the choice of splitting, up to this swapping.

We now give a simplified form of the fusion on $(\SSS_p, \circ_p)$.

\begin{fact}
    Let $p,q\in{\cP}$. For all $H_\pi,H_\sigma\in\SSS_p$ and $x\in A_q$, the composition $(\pi\circ_p\sigma)_q(x)$ can be expressed as follows:
    \begin{itemize}
        \item If there exists $a\in{\cP}$ with $a\geq p,q$, then
        $
        (\pi\circ_p\sigma)_q(x)=(\pi_a\circ\sigma_q)(x).
        $
        
        \item If there exists $d\in{\cP}$ with $d\leq p,q'$, then
        $        (\pi\circ_p\sigma)_q(x)=\pi_p(u)\pi_q(x)\pi_p(u^*)
        $
        for any unitary $u\in\Hom_{A_{d'}}(H,H_\sigma)\subseteq A_p$.
    \end{itemize}
    Note that by Lemma \ref{lemma:saturation-diagonal-pair}, at least one of the above scenarios occurs.
\end{fact}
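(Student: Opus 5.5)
We start from the defining formula
\[
(\pi\circ_p\sigma)_q(x)=u_{\widetilde p}\,\pi^{\widetilde p}_p(v_{\widetilde p})\,(\pi^{\widetilde p}_q\circ\sigma^{\widetilde p}_q)(x)\,\pi^{\widetilde p}_p(v_{\widetilde p}^*)\,u_{\widetilde p}^*,
\]
and use the freedom, guaranteed by \cite{MR4927814}, to choose $\widetilde p$, $u_{\widetilde p}$ and $v_{\widetilde p}$ conveniently. In each case we pick $\widetilde p\le p$ to be a $q$-small $q$-indicator from \ref{geom:qSmallqIndicator}, but localized as close to the given $a$ or $d$ as the hypothesis allows. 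Two general identities do the simplifying. First, $\pi^{\widetilde p}_p(v)=u^*\pi_p(v)u$, since $v\in A_p$ because $v$ intertwines $A_{p'}\subset A_{\widetilde p'}$ and Haag duality applies. Second, $\pi^{\widetilde p}_q\circ\sigma^{\widetilde p}_q=\Ad(u^*)\circ\pi_q\circ\Ad(v^*)\circ\sigma_q$, which is valid on $A_q$ whenever the relevant operators land in the correct local algebras. Substituting these, the $u$'s cancel and we obtain
\[
(\pi\circ_p\sigma)_q(x)=\pi_p(v)\,\pi_?\bigl(v^*\sigma_q(x)v\bigr)\,\pi_p(v^*).
\]
The main bookkeeping question is which local algebra $\sigma^{\widetilde p}_q(x)=v^*\sigma_q(x)v$ lies in, so that $\pi^{\widetilde p}$ can legitimately be applied to it.

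\textbf{Case $d\le p,q'$.} We take $\widetilde p\le d$; this is possible by applying \ref{geom:qSmallqIndicator} to $d$, and independence of choices lets us use it. Then $\widetilde p\le q'$, so $A_q\subset A_{\widetilde p'}$ and hence $\sigma^{\widetilde p}_q=\id$ on $A_q$. The formula collapses to $u\,\pi^{\widetilde p}_p(v)\,\pi^{\widetilde p}_q(x)\,\pi^{\widetilde p}_p(v^*)\,u^*=\pi_p(v)\pi_q(x)\pi_p(v^*)$. For an arbitrary unitary $u\in\Hom_{A_{d'}}(H,H_\sigma)$ we take $v:=u$, which also intertwines $A_{\widetilde p'}\supset A_{d'}$; this yields the claim.

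\textbf{Case $a\ge p,q$.} Since $\sigma$ is localized at $p$, $\sigma_q(x)=\sigma_a(x)\in A_a$. Moreover $v^*\sigma_a(x)v$ also lies in $A_a$, because $v\in A_p\subset A_a$. So $\pi^{\widetilde p}_a$ applies, with the same computation as above, giving $\pi_p(v)\pi_a(v^*\sigma_q(x)v)\pi_p(v^*)=\pi_a(\sigma_q(x))$ by multiplicativity of $\pi_a$ and isotony $\pi_a|_{A_p}=\pi_p$.

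\textbf{Main obstacle.} The delicate point is justifying $\pi^{\widetilde p}_q\circ\sigma^{\widetilde p}_q=\pi^{\widetilde p}_a\circ\sigma^{\widetilde p}_a$ on $A_q$, which requires that the composite defined in \cite{MR4927814} uses the extension to $a$. This is exactly the content of the independence results there (the composite is determined on $A_a$ and restricts by isotony). If needed, we would instead choose $\widetilde p$ with $\widetilde p\le q$, so that $q$ contains the localization region and the $a$-extension is automatic. The final sentence of the statement is Lemma~\ref{lemma:saturation-diagonal-pair}.
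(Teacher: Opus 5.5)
Your $a$-case works, and your ``main obstacle'' there is not a real one. Since $\widetilde p$ is a $q$-indicator, $\sigma^{\widetilde p}_q(x)=\sigma^{\widetilde p}_a(x)$ already lies in $A_q$ (this is needed for the defining formula to make sense at all), so $\pi^{\widetilde p}_q$ and $\pi^{\widetilde p}_a$ agree on it by isotony, and you need no appeal to the independence results. The paper shortcuts this step by using isotony of the sector $\pi\circ_p\sigma$ to reduce to $q=a$.

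The genuine gap is at the end of your $d$-case. You take an arbitrary unitary $u\in\Hom_{A_{d'}}(H,H_\sigma)$ as $v_{\widetilde p}$, on the grounds that it ``also intertwines $A_{\widetilde p'}\supset A_{d'}$''. This is backwards. From $\widetilde p\le d$ you get $A_{d'}\subseteq A_{\widetilde p'}$, hence $\Hom_{A_{\widetilde p'}}(H,H_\sigma)\subseteq\Hom_{A_{d'}}(H,H_\sigma)$. Intertwining the smaller algebra $A_{d'}$ does not give intertwining of the larger one. Already for $\sigma=\mathbbm{1}$ your claim would say every unitary of $A_d$ lies in $A_{\widetilde p}$. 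You also cannot fall back on $\widetilde p=d$, since $d$ need not be $q$-small. The independence results of \cite{MR4927814} only let you vary the unitary within $\Hom_{A_{\widetilde p'}}(H,H_\sigma)$. So what you have actually shown is $(\pi\circ_p\sigma)_q(x)=\pi_p(v)\pi_q(x)\pi_p(v^*)$ for $v=v_{\widetilde p}$ only. The ``for any unitary $u\in\Hom_{A_{d'}}(H,H_\sigma)$'' part of the statement, which is its real content, is not yet proved.

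The missing step, which is exactly the last line of the paper's proof, is to compare $v$ with an arbitrary such $u$. Both $v$ and $u$ intertwine $A_{d'}$, so for $y\in A_{d'}$ we get $v^*uy=v^*\sigma_{d'}(y)u=yv^*u$. Hence $v^*u\in A_{d'}'=A_d\subseteq A_p$ by Haag duality. Since $d\le q'$, the elements $d$ and $q$ are disjoint by \ref{geom:SelfDisjoint}, so locality gives $[\pi_d(v^*u),\pi_q(x)]=0$. Therefore
\[
\pi_p(v)\pi_q(x)\pi_p(v^*)=\pi_p(v)\pi_q(x)\pi_d(v^*u)\pi_p(u^*)=\pi_p(v)\pi_d(v^*u)\pi_q(x)\pi_p(u^*)=\pi_p(u)\pi_q(x)\pi_p(u^*),
\]
which is the claimed formula for an arbitrary unitary $u$.
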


\begin{proof}
    If there is $a\geq p,q$, then by isotony,
    \[
    (\pi\circ_p\sigma)_q(x)=(\pi\circ_p\sigma)_a(x)=\pi_a(\sigma_a(x))=\pi_a(\sigma_q(x)).
    \]
    If there is $d\leq p,q'$, then by \ref{geom:qSmallqIndicator}, we may take a $q$-small $q$-indicator $\widetilde{p}\leq d\leq p,q'$. Taking unitaries $u_{\widetilde{p}}\in\Hom_{A_{\widetilde{p}'}}(H,H_\pi)$ and $v_{\widetilde{p}}\in\Hom_{A_{\widetilde{p}'}}(H,H_\sigma)$, we obtain the superselection sectors $\pi^{\widetilde{p}}(-)=u_{\widetilde{p}}^*\pi(-)u_{\widetilde{p}}$ and $\sigma^{\widetilde{p}}(-)=v_{\widetilde{p}}^*\sigma(-)v_{\widetilde{p}}$ localized at $\widetilde{p}$ (and hence at $q'$). Thus,
    \begin{align*}
        (\pi\circ_p\sigma)_q(x)
        &=u_{\widetilde{p}}\pi^{\widetilde{p}}_p(v_{\widetilde{p}})(\pi^{\widetilde{p}}_q\circ\sigma^{\widetilde{p}}_q)(x)\pi^{\widetilde{p}}_p(v_{\widetilde{p}}^*)u_{\widetilde{p}}^*
        =\pi_p(v_{\widetilde{p}})u_{\widetilde{p}}\pi^{\widetilde{p}}_q(x)u_{\widetilde{p}}^*\pi_p(v_{\widetilde{p}}^*)
        =\pi_p(v_{\widetilde{p}})\pi_q(x)\pi_p(v_{\widetilde{p}}^*)\pi_p(uu^*) \\
        &=\pi_p(v_{\widetilde{p}})\pi_q(x)\pi_d(v_{\widetilde{p}}^*u)\pi_p(u^*)
        =\pi_p(v_{\widetilde{p}})\pi_d(v_{\widetilde{p}}^*u)\pi_q(x)\pi_p(u^*) 
        =\pi_p(u)\pi_q(x)\pi_p(u^*).
        \qedhere
    \end{align*}
\end{proof}

We now work towards proving the equivalence of the monoidal structures on $SSS$ and $SSS_p$. We first prove the Connes fusion of localized sectors is isomorphic to the $\circ_p$ fusion of sectors.

\begin{lemma}
    Suppose $p\in{\cP}$ and $H_\pi,H_\sigma\in\SSS_p$. Define the map $\nu:H_\pi\boxtimes H_\sigma(p)\to H_{\pi\circ_p\sigma}$ by
    \[
    \eta\otimes g=\pi_p(g)\eta\otimes 1\xmapsto{\nu} \pi_p(g)\eta
    \]
    for $\eta\in H_\pi$ and $g\in\Hom_{A_{p'}}(H,H_\sigma)=A_p$. Then $\nu$ is a unitary intertwiner.
\end{lemma}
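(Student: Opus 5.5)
The map $\nu$ is the unitor-type evaluation $\eta\otimes g\mapsto \pi_p(g)\eta$ composed with the identification $\eta\otimes g=\pi_p(g)\eta\otimes 1$. The latter comes from Fact~\ref{fact:relative-tensor-relations}, since $g\in\Hom_{A_{p'}}(H,H_\sigma)=A_{p'}'=A_p$ by Haag duality. The plan has three parts: show $\nu$ is well defined and isometric on the algebraic relative tensor product, show it has dense range, and then check that it intertwines every $A_q$, $q\in\cP$.

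For the first two parts, compute on simple tensors:
\[
\langle \eta_1\otimes g_1|\eta_2\otimes g_2\rangle=\langle \pi_p(g_2^*g_1)\eta_1|\eta_2\rangle=\langle \pi_p(g_1)\eta_1|\pi_p(g_2)\eta_2\rangle .
\]
Hence $\nu$ preserves the sesquilinear form. It therefore kills the null space and extends to an isometry on $H_\pi\boxtimes H_\sigma(p)$. Surjectivity is immediate: taking $g=1\in A_p$ gives $\nu(\eta\otimes 1)=\eta$.

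The third part is the substantive one. By Lemma~\ref{lemma:saturation-diagonal-pair}, for each $q$ either some $a\ge p,q$ exists or some $d\le p,q'$ exists. I would first fix the action on $H_\pi\boxtimes H_\sigma(p)$ concretely through the right-sided analogue of Proposition~\ref{prop:action-from-a-b-c-d}.

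\emph{Case $a\ge p,q$.} Transporting along $p\le a\ge q$, the element $x\in A_q$ acts on $\eta\otimes g\in H_\pi\boxtimes H_\sigma(a)$ by $\eta\otimes\sigma_q(x)g$. Here $g\in\Hom_{A_{a'}}(H,H_\sigma)\subset A_a$, since $\sigma$ is localized in $p\le a$ and is therefore also localized in $a$. Applying $\nu$ (taken over $a$, compatibly with the inclusion from $p$) gives $\pi_a(\sigma_q(x)g)\eta=\pi_a(\sigma_q(x))\,\pi_a(g)\eta$. By the first bullet of the Fact, this equals $(\pi\circ_p\sigma)_q(x)\,\nu(\eta\otimes g)$. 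One must check that $\nu$ commutes with the inclusion $H_\pi\boxtimes H_\sigma(p)\to H_\pi\boxtimes H_\sigma(a)$; this is clear on the dense elements $\eta\otimes g$ with $g\in A_p$.

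\emph{Case $d\le p,q'$.} Pick a unitary $u\in\Hom_{A_{d'}}(H,H_\sigma)\subset A_p$. Then $x\in A_q\subset A_{d'}$ acts on $\eta\otimes u$ by $\pi_{q}(x)\eta\otimes u$, through the left action of $A_{d'}$ on $H_\pi\boxtimes H_\sigma(d)$ transported along $p\ge d$. Applying $\nu$ gives $\pi_p(u)\pi_q(x)\eta$. Writing $\zeta=\pi_p(u)\eta=\nu(\eta\otimes u)$, this becomes $\pi_p(u)\pi_q(x)\pi_p(u^*)\zeta$, which by the second bullet of the Fact equals $(\pi\circ_p\sigma)_q(x)\zeta$. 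Since vectors $\eta\otimes u$ with $u$ fixed already exhaust the space (their images $\pi_p(u)\eta$ cover $H$), intertwining holds everywhere.

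The main obstacle is the bookkeeping. One must confirm that the right-sided action constructed in Section~\ref{sec:connesfusion} really is computed by these short zig-zags, so that in the $d$ case $A_q$ acts on the $H_\pi$ factor through $\pi_{d'}$. One must also confirm that $\nu$ is natural with respect to the inclusions $H_\pi\boxtimes H_\sigma(d)\to H_\pi\boxtimes H_\sigma(p)\to H_\pi\boxtimes H_\sigma(a)$, which amounts to noting that $\nu$ is given by the same formula $\eta\otimes g\mapsto\pi(g)\eta$ at every level.
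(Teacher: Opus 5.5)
Your proposal is correct and follows the paper's proof essentially step for step: isometry and surjectivity come from $\pi_p(g_2^*g_1)$ and $g=1$, and intertwining follows from the two cases of Lemma~\ref{lemma:saturation-diagonal-pair}, transporting along $p\le a$ or $p\ge d$ and matching against the two bullets of the Fact. The differences are cosmetic. In the $a$ case you push $\nu$ through the inclusion to level $a$, where the paper uses the relation $\xi\otimes_{A_a}\sigma_a(x)=\pi_a(\sigma_a(x))\xi\otimes_{A_a}1$. In the $d$ case you test only on $\eta\otimes u$, which as you note already exhaust the space, whereas the paper uses general $g\in\Hom_{A_{d'}}(H,H_\sigma)$ together with the commutation $\pi_d(u^*g)\pi_q(x)=\pi_q(x)\pi_d(u^*g)$.
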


\begin{proof}
    It is easy to see that the map defined above is isometric and surjective, so it remains to show that $\nu$ intertwines $A_q$ for every $q\in{\cP}$. Let $x\in A_q$.
    \item[\underline{Case 1: There exists $a\in{\cP}$ with $a\geq p,q$.}] Let $\eta\otimes g\in H_\pi\boxtimes H_\sigma(p)$. Note that $\sigma$ is localized at $a$, so $\sigma_a(x)\in A_a$. Thus,
    \begin{align*}
        x\cdot(\eta\otimes_{A_p}\! g)
        &=\iota_{p,a}^{-1}x\cdot\iota_{p,a}(\pi_p(g)\eta\otimes_{A_p}\! 1)
        =\iota_{p,a}^{-1}(\pi_p(g)\eta\otimes_{A_a}\!\sigma_a(x))
        =\iota_{p,a}^{-1}(\pi_a(\sigma_a(x))\pi_p(g)\eta\otimes_{A_a}\!1) \\
        &=\pi_a(\sigma_q(x))\pi_p(g)\eta\otimes_{A_p}\!1,
    \end{align*}
    so $\nu(x\cdot(\eta\otimes_{A_p}\!g))=\pi_a(\sigma_q(x))\pi_p(g)\eta=(\pi\circ_p\sigma)_q(x)\pi_p(g)\eta
        =x\cdot\nu(\eta\otimes_{A_p}\! g)$.

    \item[\underline{Case 2: There exists $d\in{\cP}$ with $d\leq p,q'$.}] It suffices to prove that $\nu$ intertwines the action of $x$ on every $\eta \otimes g \in H_\pi \otimes_{A_p} \Hom_{A_d'}(H, H_\sigma)$, which span a dense subset of $H_\pi \boxtimes_p H_\sigma$. We have
    \[
    \nu(x\cdot(\eta\otimes_{A_p}\! g))=\nu(\iota_{d,p}x\cdot(\eta\otimes_{A_d}\!g))=\nu(\iota_{d,p}(\pi_{d'}(x)\eta\otimes_{A_d}\!g))=\nu(\pi_q(x)\eta\otimes_{A_p}g)=\pi_p(g)\pi_q(x)\eta.
    \]
    On the other hand, for any unitary $u\in\Hom_{A_{d'}}(H,H_\sigma)\subseteq A_p$,
    \begin{align*}
        x\cdot\nu(\eta\otimes_{A_p}\!g)
        &=(\pi\circ_p\sigma)_q(x)\pi_p(g)\eta
        =\pi_p(u)\pi_q(x)\pi_p(u^*)\pi_p(g)\eta 
        =\pi_p(u)\pi_q(x)\pi_d(u^*g)\eta \\
        &=\pi_p(u)\pi_d(u^*g)\pi_q(x)\eta
        =\pi_p(g)\pi_q(x)\eta.
        \qedhere
    \end{align*}
\end{proof}

We can now prove that the two categories are equivalent via the inclusion functor.

\begin{proposition}\label{prop:monoidal-equivalence-to-localized-endomorphisms}
    For all $p\in{\cP}$, the inclusion functor $(\SSS_p,\circ_p,H)\to (\SSS,\boxtimes_p,H)$ is an equivalence of monoidal categories.
\end{proposition}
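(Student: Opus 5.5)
The inclusion functor $\iota\colon \SSS_p\to\SSS$ is fully faithful, since morphisms in both categories are bounded intertwiners of the whole net. It is essentially surjective by the absorbing property. So it is an equivalence of underlying $\rmW^*$-categories. It remains to equip it with a unitary tensorator $\nu_{\pi,\sigma}\colon H_\pi\boxtimes H_\sigma(p)\to H_{\pi\circ_p\sigma}$ and to check that $\nu$ is natural and compatible with the associators and unitors. For the tensorator I take the map $\nu$ of the preceding lemma, $\eta\otimes g\mapsto \pi_p(g)\eta$. That lemma already shows it is a unitary intertwiner, so each $\nu_{\pi,\sigma}$ is an isomorphism in $\SSS$. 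For the unit, I take the identity $H\to H$, noting that $\mathbbm 1$ is the vacuum sector, which is the unit of $(\SSS,\boxtimes_p,H)$.

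\emph{Naturality.} Let $S\colon\pi_1\to\pi_2$ and $T\colon\sigma_1\to\sigma_2$ be morphisms in $\SSS_p$; these are elements of $B(H)$ that intertwine the net. Since $\SSS_p$ is strict, the morphism $S\circ_p T$ is $S\,\pi_1(T)$, with $\pi_1(T)$ meaning $\pi_{1,p}(T)$ because $T\in A_{p'}'=A_p$. On the Connes fusion side, $S\boxtimes_p T$ acts by $\eta\otimes g\mapsto S\eta\otimes Tg$. Then
\[
\nu(S\eta\otimes Tg)=\pi_{2,p}(Tg)S\eta=\pi_{2,p}(T)\,S\,\pi_{1,p}(g)\eta=S\,\pi_{1,p}(T)\pi_{1,p}(g)\eta,
\]
using that $S$ intertwines $\pi_1$ and $\pi_2$ on $A_p$. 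This equals $(S\circ_p T)\,\nu(\eta\otimes g)$, as required.

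\emph{Monoidal coherence.} I need the square
\[
\nu_{\pi\circ_p\sigma,\tau}\circ(\nu_{\pi,\sigma}\boxtimes\id)=\nu_{\pi,\sigma\circ_p\tau}\circ(\id\boxtimes\nu_{\sigma,\tau})\circ\alpha_{\pi,\sigma,\tau}
\]
to commute, as maps $(H_\pi\boxtimes H_\sigma(p))\boxtimes H_\tau(p)\to H_{\pi\circ_p\sigma\circ_p\tau}$. I will check it on the dense set of elements $(\eta\otimes g_\sigma)\otimes g_\tau$ with $g_\sigma,g_\tau\in A_p$. The left side gives
\[
(\pi\circ_p\sigma)_p(g_\tau)\,\pi_p(g_\sigma)\eta=\pi_p(\sigma_p(g_\tau))\,\pi_p(g_\sigma)\eta.
\]
On the right side, $\alpha$ sends the element to $\eta\otimes(g_\sigma(-)\otimes g_\tau)$. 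The map $\id\boxtimes\nu$ then turns the inner factor into the operator $\xi\mapsto\sigma_p(g_\tau)g_\sigma\xi$, which lies in $A_p$. Applying $\nu$ gives $\pi_p(\sigma_p(g_\tau)g_\sigma)\eta$, and this agrees with the left side. For the unitors, $\nu_{\mathbbm 1,\sigma}(\eta\otimes g)=g\eta=\lambda(\eta\otimes g)$ and $\nu_{\pi,\mathbbm 1}(\xi\otimes g)=\pi_p(g)\xi=\rho(\xi\otimes g)$, so the triangle diagrams are immediate.

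\emph{Main obstacle.} The delicate point is the density and well-definedness bookkeeping behind the associator computation. The associator of Proposition \ref{prop:sss-monoidal} is only defined on a dense subspace via two-sided fusions, so I must confirm that elements with $g_\sigma,g_\tau\in\Hom_{A_{p'}}(H,-)=A_p$ are dense in the iterated fusion. I would use Lemma \ref{lemma:dense-subsets-of-Connes-fusion} for this. I must also confirm that identifying $g_\sigma(-)\otimes g_\tau$ with an element of $\Hom_{A_{p'}}(H,H_\sigma\boxtimes H_\tau(p))$ is compatible with $\nu_{\sigma,\tau}$. Once this is in place, all the coherence identities reduce to the equality $(\pi\circ_p\sigma)_p=\pi_p\circ\sigma_p$, which holds by the Fact above with $a=p$.
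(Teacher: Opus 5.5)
Your proposal is correct and follows essentially the same route as the paper. The tensorator is the unitary $\nu$ from the preceding lemma, the associativity square is checked on elements $(\eta\otimes g_\sigma)\otimes g_\tau$ (both sides give $\pi_p(\sigma_p(g_\tau)g_\sigma)\eta$), and the unit conditions reduce to $\nu_{\mathbbm 1,\sigma}=\lambda$ and $\nu_{\pi,\mathbbm 1}=\rho$. Your explicit naturality check is a welcome addition that the paper leaves implicit, and the density concern you flag is immediate from the definition of the iterated right-sided fusion as a completion: for fixed $g_\tau$ the map $\zeta\mapsto\zeta\otimes g_\tau$ is bounded, and $\Hom_{A_{p'}}(H,H_\tau)=A_p$.
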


\begin{proof}
    For $(H,\pi),(H,\sigma),(H,\tau)\in\SSS_p$, we need to verify the commutativity of the below diagram.
    \[
    \begin{tikzcd}[column sep=1em, row sep=1em]
	{(H_\pi\boxtimes H_\sigma(p))\boxtimes H_\tau(p)} && {H_\pi\boxtimes (H_\sigma\boxtimes H_\tau(p))(p)} \\
	{H_{\pi\circ_p\sigma}\boxtimes H_\tau(p)} && {H_\pi\boxtimes H_{\sigma\circ_p\tau}(p)} \\
	{H_{(\pi\circ_p\sigma)\circ_p\tau}} && {H_{\pi\circ_p(\sigma\circ_p\tau)}}
	\arrow[from=1-1, to=1-3]
	\arrow[from=1-1, to=2-1]
	\arrow[from=1-3, to=2-3]
	\arrow[from=2-1, to=3-1]
	\arrow[from=2-3, to=3-3]
	\arrow[from=3-1, to=3-3]
\end{tikzcd}
    \]
    Note that the bottom map is the identity. For $(\eta\otimes f)\otimes g\in (H_\pi\boxtimes H_\sigma(p))\boxtimes H_\tau(p)$, the bottom left path gives
    \[
    (\eta\otimes f)\otimes g\mapsto \pi_p(f)\eta\otimes g\mapsto (\pi\circ_p\sigma)_p(g)\pi_p(f)\eta=\pi_p(\sigma_p(g)f)\eta,
    \]
    while the top right path gives
    \[
    (\eta\otimes f)\otimes g\mapsto\eta\otimes(f(-)\otimes g)\mapsto\eta\otimes(\nu\circ(f(-)\otimes g))=\eta\otimes\sigma_p(g)f\mapsto\pi_p(\sigma_p(g)f)\eta.
    \]
    Since $\SSS_p$ is strictly associative and has the same monoidal unit as $\SSS$, the remaining coherence conditions are $\rho_\pi=\nu_{\pi,\mathbbm{1}}$ and $\lambda_\pi=\nu_{\mathbbm{1},\pi}$, which are easily verified.
\end{proof}

Finally, we prove the equivalence of the braidings on $SSS$ and $SSS_p$. 

\begin{proposition}\label{prop:braided-equivalence-with-localized-enodmorphisms}
    Fix a splitting $(r,s)$ of $p\in\cP$. For all $(H,\pi),(H,\sigma)\in\SSS_p$, the diagram below commutes, where $\gamma$ and the candidate braiding $\beta$ are both induced by $(r,s)$.
    \[\begin{tikzcd}[column sep=1em, row sep=1em]
	{H_\pi\boxtimes H_\sigma(p)} && {H_\sigma\boxtimes H_\pi(p)} \\
	\\
	{H_{\pi\circ_p\sigma}} && {H_{\sigma\circ_p\pi}}
	\arrow["{\beta_{\pi,\sigma}}", from=1-1, to=1-3]
	\arrow["{\nu_{\pi,\sigma}}"', from=1-1, to=3-1]
	\arrow["{\nu_{\sigma,\pi}}", from=1-3, to=3-3]
	\arrow["{\gamma_{\pi,\sigma}}", from=3-1, to=3-3]
\end{tikzcd}\]
Hence, $\beta$ is a valid braiding on SSS, and the monoidal equivalence from Proposition \ref{prop:monoidal-equivalence-to-localized-endomorphisms} is a braided equivalence.
\end{proposition}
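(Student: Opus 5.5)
We verify commutativity on a dense subset and then deduce the remaining claims formally. By Lemma \ref{lemma:dense-subsets-of-Connes-fusion} (right-sided version), the elementary tensors $f(\xi)\otimes g$ with $f\in\Hom_{A_{r'}}(H,H_\pi)$, $\xi\in H$, $g\in\Hom_{A_{s'}}(H,H_\sigma)$ span a dense subspace of $H_\pi\boxtimes H_\sigma(p)$. Here we use that $\Hom_{A_{s'}}(H,H_\sigma)\subseteq\Hom_{A_{p'}}(H,H_\sigma)$, and that $f(\xi)$ with $f$ a unitary already exhausts $H_\pi$. Since all four maps in the square are bounded, it suffices to check the square on such vectors.

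\textbf{Step 1: the top-right path.} The computation after the definition of $\beta$ gives $\beta_{\pi,\sigma}(f(\xi)\otimes g)=g(\xi)\otimes f$. Because $H_\sigma$ is localized at $p$ and $s\leq p$, we have $g\in A_p$, with $g(\xi)=g\xi$; likewise $f\in A_p$. Then $\nu_{\sigma,\pi}(g\xi\otimes f)=\sigma_p(f)g\xi$.

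\textbf{Step 2: the left-bottom path.} We have $\nu_{\pi,\sigma}(f\xi\otimes g)=\pi_p(g)f\xi$. Choose unitaries $u_r\in\Hom_{A_{r'}}(H,H_\pi)$ and $v_s\in\Hom_{A_{s'}}(H,H_\sigma)$. Then
\[
\gamma_{\pi,\sigma}\pi_p(g)f\xi=\sigma_p(u_r)v_su_r^*\pi_p(v_s^*g)f\xi.
\]
Now $v_s^*g\in A_{s'}'=A_s$, and $\pi$ restricted to $A_{r'}\supseteq A_s$ is intertwined by $u_r^*$ on the relevant side. More precisely, $u_r^*\pi_{r'}(y)=y u_r^*$ for $y\in A_{r'}$, since $u_r$ intertwines $\mathrm{id}$ with $\pi$ on $A_{r'}$. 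Hence $u_r^*\pi_p(v_s^*g)=v_s^*g\,u_r^*$, using $s\leq r'$ (which holds because $r,s$ are disjoint). This gives
\[
\gamma\pi_p(g)f\xi=\sigma_p(u_r)v_sv_s^*g\,u_r^*f\xi=\sigma_p(u_r)g\,u_r^*f\xi.
\]
Next, $u_r^*f\in A_r$ and $g$ intertwines $A_{s'}\supseteq A_r$, so $g\,u_r^*f=\sigma_r(u_r^*f)g$. Therefore
\[
\gamma\pi_p(g)f\xi=\sigma_p(u_r)\sigma_p(u_r^*f)g\xi=\sigma_p(f)g\xi,
\]
which matches Step 1.

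The main obstacle is the bookkeeping of which algebra each operator lies in and which intertwining relation applies. The two facts that make it work are that $r,s$ are disjoint, so that $s\leq r'$ and $r\leq s'$, and that localization at $p$ puts the Hom-spaces inside $A_p$ via Haag duality.

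\textbf{Step 3: the consequences.} The vertical maps $\nu$ are the tensorators of the monoidal equivalence from Proposition \ref{prop:monoidal-equivalence-to-localized-endomorphisms}. Commutativity of the square says that $\beta$ restricted to $\SSS_p$ is $\nu^{-1}\gamma\nu$. Every object of $\SSS$ is unitarily isomorphic to one in $\SSS_p$, and $\beta$ is natural because it is built from natural canonical maps. Transporting along these isomorphisms, the hexagon identities for $\gamma$ yield the hexagon identities for $\beta$, so $\beta$ is a braiding. The square is then exactly the statement that the monoidal equivalence is braided.
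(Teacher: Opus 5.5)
Your proof is correct and follows the paper's argument. You use the same dense set of tensors with $g\in\Hom_{A_{s'}}(H,H_\sigma)$ and the same key step $u_r^*\pi_p(v_s^*g)=v_s^*g\,u_r^*$, which holds because $v_s^*g\in A_s\subseteq A_{r'}$. The only cosmetic difference is that you evaluate the top-right path via $\beta_{\pi,\sigma}(f(\xi)\otimes g)=g(\xi)\otimes f$, which costs you the extra step $g\,u_r^*f=\sigma_r(u_r^*f)g$, whereas the paper takes $f=u_r$ directly.

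One small correction: the density of these tensors should come from Lemma~\ref{lemma:inclusions-induce-unitaries} (the inclusion $H_\pi\boxtimes H_\sigma(s)\to H_\pi\boxtimes H_\sigma(p)$ is unitary), or directly from $\eta\otimes g=\pi_p(v_s^*g)\eta\otimes v_s$. It does not follow from Lemma~\ref{lemma:dense-subsets-of-Connes-fusion}, because $\Hom_{A_{s'}}(H,H_\sigma)=v_sA_s$ is not strongly dense in $\Hom_{A_{p'}}(H,H_\sigma)=v_sA_p$.
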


\begin{proof}
    By denseness, it suffices to prove commutativity on $\eta\otimes g\in H_\pi \otimes_{A_p} \Hom_{A_s'}(H, H_\sigma)\subset H_\pi\boxtimes H_\sigma(p)$. Take unitaries $u_r\in\Hom_{A_{r'}}(H,H_\pi)\subseteq A_p$ and $v_s\in\Hom_{A_{s'}}(H,H_\sigma)\subseteq A_p$, and define $\pi^r\in\SSS_r$ and $\sigma^s\in\SSS_s$ by $\pi^r(-)=u_r^*\pi(-)u_r$ and $\sigma^s(-)=v_s^*\sigma(-)v_s$. Then the bottom left path gives
\begin{align*}
    \eta\otimes g\mapsto \pi_p(g)\eta\mapsto \sigma_p(u_r)v_su_r^*\pi_p(v_s^*)\pi_p(g)\eta
    =\sigma_p(u_r)v_s\pi^r_p(v_s^*)\pi^r_p(g)u_r^*\eta
    &=\sigma_p(u_r)v_s\pi^r_s(v_s^*g)u_r^*\eta \\
    &=\sigma_p(u_r)gu_r^*\eta,
\end{align*}
while the top right path gives $\eta\otimes g\mapsto g(u_r^*(\eta))\otimes u_r\mapsto \sigma_p(u_r)gu_r^*\eta$.
\end{proof}
    
\section{Application to Unitary Braided Categorical Nets}\label{sec:braidedcatnets}
In this section, we will prove that the category of superselection sectors of a braided categorical net (constructed in \cite{2506.19969}) is braided equivalent to the unitary $\mathsf{ind}$-completion of the (opposite of the) original braided category. 
We prove the slightly more general statement where the braided category is replaced by the full subcategory generated by an object. To fix some notation, let $R_n$ denote the square centered at the origin of radius $n$, and let $\Lambda_n\coloneq \Lambda \cap R_n$ for a cone $\Lambda$.
Now, let $\cB$ be a unitary braided fusion category and fix an object $X\in\cB$ such that $1_{\cB}$ is a subobject of $X$. 
Take $N\in\mathbb{N}$ such that every simple object in $\langle X\rangle$ is isomorphic to a subobject of $X^{\otimes N}$. 
For the rest of this section, we will consider $n\in\mathbb{N}$ large enough so that $|R_n|\geq N$.

We now define a map $\phi_n:\Hom_{\mathfrak{A}(R_n\setminus \Lambda)}(K_1^{R_n},K_x^{R_n})\otimes_{\mathfrak{A}(\Lambda_n)}K_y^{R_n}\to K_{x\otimes y}^{R_n}$ which will serve as our tensorator on local regions. 
An application of the Yoneda lemma to semisimple categories yields a more explicit description of the space $\Hom_{\mathfrak{A}(R_n\setminus \Lambda)}(K_1^{R_n},K_x^{R_n})$.

\begin{lemma}\label{lemma:yoneda}
    Let $\cC$ be a finitely semisimple category and suppose $Y\in\cC$ satisfies $\cC(c\to Y)\neq0$ for all $c\in\Irr{(\cC)}$. Then for any $a,b\in\cC$,
    \[
    \Hom_{\End(Y)}(\cC(a, Y),\cC(b, Y))\cong \cC(b, a).
    \]
    That is, the maps from $\cC(a, Y)$ to $\cC(b, Y)$ that commute with the left action of $\End(Y)$ by post-composition are given exactly by pre-composition by elements of $\cC(b, a)$.
\end{lemma}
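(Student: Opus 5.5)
The plan is to define the map $\Phi\colon \cC(b,a)\to \Hom_{\End(Y)}(\cC(a,Y),\cC(b,Y))$ by $\Phi(h)(f)=f\circ h$, and to check that it is well defined, injective and surjective.

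Well-definedness is immediate, since pre-composition commutes with post-composition by any $e\in\End(Y)$: $e\circ(f\circ h)=(e\circ f)\circ h$.

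Injectivity is handled as follows. Suppose $f\circ h=0$ for every $f\in\cC(a,Y)$ and $h\neq 0$. By semisimplicity there is a simple $c$ and maps $i\colon c\to b$, $j\colon a\to c$ with $j\circ h\circ i\neq 0$; hence $j\circ h\neq 0$. Pick a nonzero $k\colon c\to Y$, which exists by the hypothesis on $Y$. Since $c$ is simple, $k$ is a split monomorphism, so $k\circ j\circ h\neq 0$. But $k\circ j\in\cC(a,Y)$, which is a contradiction.

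For surjectivity, first reduce to the case where $a$ and $b$ are simple. Both sides are additive in $a$ and $b$: decomposing $a=\bigoplus a_i$ and $b=\bigoplus b_j$ via inclusions and projections gives $\cC(a,Y)\cong\bigoplus\cC(a_i,Y)$ as left $\End(Y)$-modules, and $\Phi$ is compatible with these decompositions. The dimension count then gives the result, and it is cleanest to do this count directly. Write $Y\cong\bigoplus_c c\otimes \bbC^{m_c}$ with all $m_c\geq 1$. Then $\End(Y)\cong\bigoplus_c M_{m_c}(\bbC)$, and $\cC(c,Y)\cong \bbC^{m_c}$ is the irreducible module of the $c$-th matrix block, with these modules pairwise inequivalent for distinct $c$.

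Writing $a\cong\bigoplus_c c^{\oplus \alpha_c}$ and $b\cong\bigoplus_c c^{\oplus\beta_c}$, we get $\cC(a,Y)\cong\bigoplus_c(\bbC^{m_c})^{\oplus\alpha_c}$. By Schur's lemma,
\[
\dim\Hom_{\End(Y)}(\cC(a,Y),\cC(b,Y))=\sum_c\alpha_c\beta_c=\dim\cC(b,a).
\]
Thus the injective linear map $\Phi$ between spaces of equal finite dimension is bijective.

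The main subtlety is keeping the contravariance straight: pre-composition reverses the arrows, which is why the answer is $\cC(b,a)$ rather than $\cC(a,b)$. We also need $m_c\geq1$ for every simple $c$ so that no summand of $a$ or $b$ is invisible to $\cC(-,Y)$. That positivity is exactly the hypothesis $\cC(c,Y)\neq0$ for all $c\in\Irr(\cC)$, and it is the only place the hypothesis enters, both in the injectivity argument and in the dimension count.
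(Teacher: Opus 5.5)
Your proof is correct, but it takes a different route from the paper's. The paper either cites the Semisimple Commutation Lemma of the earlier work, applied to $a\oplus b$, or argues conceptually in two steps:
\begin{enumerate}
\item Since $Y$ contains every simple, an $\End(Y)$-linear map $\cC(a,Y)\to\cC(b,Y)$ extends uniquely to a natural transformation $\cC(a,-)\Rightarrow\cC(b,-)$, and every natural transformation arises this way. Implicitly, this is because every object is a retract of some $Y^{\oplus n}$.
\item The Yoneda lemma then identifies such natural transformations with $\cC(b,a)$.
\end{enumerate}

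You instead write down the comparison map $h\mapsto(-)\circ h$ explicitly. You prove injectivity by hand, using that a nonzero map from a simple into $Y$ is split monic. You get surjectivity from a dimension count, via $\End(Y)\cong\bigoplus_c M_{m_c}(\bbC)$ and Schur's lemma.

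What each approach buys:
\begin{itemize}
\item Your version is self-contained and shows exactly where $m_c\geq 1$ is needed. The paper simply asserts the \emph{extends uniquely} step without verifying it.
\item The paper's Morita/Yoneda argument does not depend on finite-dimensionality or on $\End(c)=\bbC$. It works whenever every object is a retract of a finite direct sum of copies of $Y$.
\end{itemize}

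One small point: your preliminary reduction to simple $a$ and $b$ is never actually used, since you do the count directly. You can drop it.
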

\begin{proof}
By considering the direct sum $a\oplus b$, this is immediate from \cite[Semisimple Commutation Lemma 3.8]{2506.19969}.
Here is another direct proof for the convenience of the reader.
    Since $Y$ contains a copy of every simple object, every $f\in \Hom_{\End(Y)}(\cC(a, Y),\cC(b, Y))$ extends uniquely to a natural transformation $\eta:\cC(a,-)\Rightarrow\cC(b,-)$ with $\eta_Y=f$, and all natural transformations arise this way. 
    The result then follows from the Yoneda Lemma.
\end{proof}

\begin{corollary}\label{cor:rewrite-hom-space}
    $\Hom_{\mathfrak{A}(R_n\setminus \Lambda)}(K_1^{R_n},K_x^{R_n})\cong \cB(x\otimes X^{\otimes R_n}, X^{\otimes R_n})$ as $\mathfrak{A}(R_n)$-bimodules.
\end{corollary}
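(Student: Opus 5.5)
The plan is to reduce to Lemma~\ref{lemma:yoneda} by splitting the region $R_n$ into the part outside $\Lambda$ and the part inside $\Lambda$. Write $Y:=X^{\otimes (R_n\setminus\Lambda)}$ and $Z:=X^{\otimes \Lambda_n}$. Using the braiding of $\cB$, as in the well-definedness of $X^{\otimes R}$ for disk-like regions, I would fix a unitary identification $X^{\otimes R_n}\cong Y\otimes Z$. Under it, $\mathfrak A(R_n\setminus\Lambda)=\End_\cB(Y)\otimes \id_Z$ and $\mathfrak A(\Lambda_n)=\id_Y\otimes\End_\cB(Z)$ act on $K_x^{R_n}=\cB(x,Y\otimes Z)$ by post-composition. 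I will only use the fact that these are commuting subalgebras of $\mathfrak A(R_n)$, and I will check at the end that the chosen identification does not affect the bimodule structure (different choices differ by conjugation by a unitary in $\mathfrak A(R_n)$).

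Next I would move $Z$ across by duality. Using a standard solution of the conjugate equations for $Z$, Frobenius reciprocity gives natural unitary isomorphisms $\cB(a,Y\otimes Z)\cong\cB(a\otimes\overline Z,Y)$. These are natural in $a$ and commute with post-composition by $\End(Y)$. Hence
\[
\Hom_{\mathfrak A(R_n\setminus\Lambda)}(K_1^{R_n},K_x^{R_n})\cong \Hom_{\End(Y)}\bigl(\cB(\overline Z,Y),\cB(x\otimes\overline Z,Y)\bigr).
\]
Now I apply Lemma~\ref{lemma:yoneda} with $\cC=\langle X\rangle$, $a=\overline Z$, $b=x\otimes\overline Z$. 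Its hypothesis holds once $n$ is large enough that $|R_n\setminus\Lambda|\geq N$. Then, since $1_\cB\subset X$, $Y$ contains $X^{\otimes N}$ as a summand and hence every simple of $\langle X\rangle$. The lemma identifies the right-hand side with $\cB(x\otimes\overline Z,\overline Z)$, acting by pre-composition. Dualizing once more, I would rewrite this as a hom space in which the full $X^{\otimes R_n}=Y\otimes Z$ appears. Concretely, the element corresponding to $F\colon x\otimes X^{\otimes R_n}\to X^{\otimes R_n}$ is the map $f\mapsto F\circ(\id_x\otimes f)$; this puts the $Y$-strands as spectators and recovers the stated form $\cB(x\otimes X^{\otimes R_n},X^{\otimes R_n})$.

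Finally, I would verify that the actions match. On the Hom side, $\mathfrak A(R_n)$ acts on the left by post-composition on $K_x^{R_n}$ and on the right by pre-composition on $K_1^{R_n}$. Tracing through the formula $f\mapsto F\circ(\id_x\otimes f)$, these become post- and pre-composition of $F$ by $\End(X^{\otimes R_n})$ (with $\id_x$ tensored on the right action). Each step above is natural in these actions, so the isomorphism is one of $\mathfrak A(R_n)$-bimodules.

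I expect the main obstacle to be the bookkeeping in the last dualization. The $\End(Y)$-commutant computed by the Yoneda lemma only sees $\overline Z$, so the identification with $\cB(x\otimes X^{\otimes R_n},X^{\otimes R_n})$ must be arranged so that the $Y$-strands are absorbed consistently with the bimodule actions, and so that the braiding used to split $R_n$ does not twist the actions. I would handle this by writing every map diagrammatically, checking the bimodule compatibility on generators $g\otimes h\in\End(Y)\otimes\End(Z)$, and confirming that the resulting isomorphism is independent of the chosen splitting unitary.
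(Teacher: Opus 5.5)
Your first three steps are exactly the paper's argument. You identify $X^{\otimes R_n}\cong Y\otimes Z$, use Frobenius reciprocity to turn $K_1^{R_n}$ and $K_x^{R_n}$ into the $\End(Y)$-modules $\cB(\overline Z,Y)$ and $\cB(x\otimes\overline Z,Y)$, and apply Lemma~\ref{lemma:yoneda} to get $\cB(x\otimes\overline Z,\overline Z)$. You are also right that the hypothesis needs $|R_n\setminus\Lambda|\geq N$, not just $|R_n|\geq N$.

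The gap is in your final step. Dualizing once more only gives
$\cB(x\otimes\overline Z,\overline Z)\cong\cB(Z\otimes x,Z)\cong\cB(x\otimes Z,Z)=\cB(x\otimes X^{\otimes\Lambda_n},X^{\otimes\Lambda_n})$.
No duality move can produce the $Y$-strands, and $\cB(x\otimes X^{\otimes R_n},X^{\otimes R_n})$ is genuinely too large. Take $x=1_\cB$: your Yoneda output has dimension $\dim\End(Z)$. The target has dimension $\dim\End(Y\otimes Z)$, which is strictly larger as soon as $X\not\cong 1_\cB$. Indeed, then $Y\cong 1_\cB\oplus Y_0$ with $Y_0\neq 0$, so $Y\otimes Z\cong Z\oplus(Y_0\otimes Z)$. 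Correspondingly, your formula $F\mapsto\bigl(f\mapsto F\circ(\id_x\otimes f)\bigr)$ does not produce an $\mathfrak A(R_n\setminus\Lambda)$-linear map for general $F$. It does so only when the $Y$-strands really are spectators, and such $F$ are parametrized exactly by $\cB(x\otimes Z,Z)$.

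The bimodule claim fails for the same reason. Pre- or post-composing with an element of $\mathfrak A(R_n)$ that does not commute with $\mathfrak A(R_n\setminus\Lambda)$ destroys $\mathfrak A(R_n\setminus\Lambda)$-linearity, so the hom space is not an $\mathfrak A(R_n)$-bimodule. The relevant structure is over $\mathfrak A(\Lambda_n)=\id_Y\otimes\End(Z)$. Transposing through your isomorphisms shows that it acts on $\cB(x\otimes Z,Z)$ by post-composition, and by pre-composition with $\id_x\otimes(-)$.

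So the $R_n$'s in the statement should be read as $\Lambda_n$, and so should the $\overline X^{\otimes R_n}$ in the paper's own chain of isomorphisms. Two things in the paper confirm this. Its accompanying picture has $f$ carrying only $x$- and $\Lambda_n$-strands. And $\phi_n$ is introduced on $\Hom_{\mathfrak A(R_n\setminus\Lambda)}(K_1^{R_n},K_x^{R_n})\otimes_{\mathfrak A(\Lambda_n)}K_y^{R_n}$.

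With that reading, your argument is complete once you stop after the Yoneda step plus one dualization. Replace your last two paragraphs with the $\mathfrak A(\Lambda_n)$-compatibility check, and drop the attempt to reach $\cB(x\otimes X^{\otimes R_n},X^{\otimes R_n})$.
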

\begin{proof}
    By Lemma \ref{lemma:yoneda},
    \begin{align*}
        \Hom_{\mathfrak{A}(R_n\setminus \Lambda)}(K_1^{R_n},K_x^{R_n})
        &\cong \Hom_{\mathfrak{A}(R_n\setminus \Lambda)}(\cB(\overline{X}^{\otimes R_n}, X^{\otimes R_n\setminus\Lambda}),\cB(\overline{X}^{\otimes R_n}\otimes x, X^{\otimes R_n\setminus\Lambda})) \\
        &\cong \cB(\overline{X}^{\otimes R_n}\otimes x, \overline{X}^{\otimes R_n}) \\
        &\cong \cB(x\otimes X^{\otimes R_n}, X^{\otimes R_n}).
        \qedhere
    \end{align*}
\end{proof}
Diagrammatically, the isomorphism in Corollary \ref{cor:rewrite-hom-space} is given as follows: every $f\in \cB(x\otimes X^{\otimes R_n}, X^{\otimes R_n})$ corresponds to the map below.

\begin{center}
\begin{tikzpicture}[baseline={(current bounding box.center)},thick]
\draw(0,0) node[draw=black,fill=white,very thick,rounded corners,baseline=center, inner xsep=0.5cm] (a){$g$};
\draw [color = blue]([xshift=-0.15cm]a.north) --++ (0, 1);
\draw [color = blue]([xshift=-0.3cm]a.north) --++ (0, 1);
\draw [color=red] ([xshift=0.3cm]a.north) --++ (0.3, 1);
\node[above] at ([xshift=-0.35cm, yshift=1cm]a.north) {$\Lambda_n$};
\node[above] at ([xshift=0.8cm, yshift=1cm]a.north) {$R_n \setminus \Lambda_n$};

\node[] (b) at ([xshift=1.5cm]a) {$\mapsto$};

\draw([xshift=1.5cm, yshift=0.33cm]b) node[draw=black,fill=white,very thick,rounded corners,baseline=center, inner xsep=0.5cm] (c){$f$};
\draw [color = blue]([xshift=0cm]c.north) --++ (0, 0.65) coordinate (d);
\draw [color = blue]([xshift=-0.15cm]c.north) to ([xshift=-0.15cm]d);
\draw [color = blue]([xshift=0.15cm]c.south) --++ (0, -0.5);
\draw [color = blue]([xshift=0.3cm]c.south) --++ (0, -0.5);
\node[above] at ([xshift=-0.15cm]d) {$\Lambda_n$};

\draw([xshift=0.4cm, yshift=-1cm]c) node[draw=black,fill=white,very thick,rounded corners,baseline=center, inner xsep=0.5cm] (a){$g$};

\draw ([xshift=-0.3cm]c.south) to [out=-90, in=90] node[pos=1, below] {$x$} ([xshift=-0.6cm, yshift=-1.325cm]c.south);
\draw [color=red] ([xshift=0.3cm]a.north) to ([xshift=1cm]d) coordinate (e);
\node[above] at ([xshift=-0.1cm]e) {$R_n \setminus \Lambda_n$};

\end{tikzpicture}
\end{center}

\noindent We can now give a graphical description of $\phi_n$.
\begin{lemma}
    The map $\phi_n:\Hom_{\mathfrak{A}(R_n\setminus \Lambda)}(K_1^{R_n},K_x^{R_n})\otimes_{\mathfrak{A}(R_n)}K_y^{R_n}\to K_{x\otimes y}^{R_n}$ given (under the isomorphism in Corollary \ref{cor:rewrite-hom-space}) by
    \begin{center}
        \begin{tikzpicture}[baseline={(current bounding box.center)},thick]
\draw(0,0) node[draw=black,fill=white,very thick,rounded corners,baseline=center, inner xsep=0.5cm] (a){$f$};
\draw [color = blue]([xshift=0cm]a.north) --++ (0, 1);
\draw [color = blue]([xshift=-0.15cm]a.north) --++ (0, 1);
\draw [color = blue]([xshift=0.3cm]a.south) --++ (0, -1);
\draw [color = blue]([xshift=0.15cm]a.south) --++ (0, -1);
\draw ([xshift=-0.3cm]a.south) to [out=-90, in=90] node[pos=1, below] {$x$} ([xshift=-0.6cm, yshift=-1cm]a.south);
\node[above] at ([yshift=1cm]a.north) {$\Lambda_n$};
\node[below] at ([xshift=0.3cm, yshift=-1cm]a.south) {$\Lambda_n$};

\node[] (b) at ([xshift=1.25cm]a) {$\otimes$};

\draw([xshift=1.25cm]b) node[draw=black,fill=white,very thick,rounded corners,baseline=center, inner xsep=0.5cm] (a){$g$};
\draw [color = blue]([xshift=-0.15cm]a.north) --++ (0, 1);
\draw [color = blue]([xshift=-0.3cm]a.north) --++ (0, 1);
\draw [color=red] ([xshift=0.3cm]a.north) --++ (0.3, 1);
\draw ([xshift=-0.3cm]a.south) to [out=-90, in=90] node[pos=1, below] {$y$} ([xshift=-0.6cm, yshift=-1cm]a.south);
\node[above] at ([xshift=-0.35cm, yshift=1cm]a.north) {$\Lambda_n$};
\node[above] at ([xshift=0.8cm, yshift=1cm]a.north) {$R_n \setminus \Lambda_n$};

\node[] (b) at ([xshift=1.5cm]a) {$\mapsto$};

\draw([xshift=1.5cm, yshift=0.5cm]b) node[draw=black,fill=white,very thick,rounded corners,baseline=center, inner xsep=0.5cm] (c){$f$};
\draw [color = blue]([xshift=0cm]c.north) --++ (0, 0.5) coordinate (d);
\draw [color = blue]([xshift=-0.15cm]c.north) --++ (0, 0.5);
\draw [color = blue]([xshift=0.15cm]c.south) --++ (0, -0.5);
\draw [color = blue]([xshift=0.3cm]c.south) --++ (0, -0.5);
\node[above] at ([xshift=-0.1cm, yshift=0.5cm]c.north) {$\Lambda_n$};

\draw([xshift=0.4cm, yshift=-1cm]c) node[draw=black,fill=white,very thick,rounded corners,baseline=center, inner xsep=0.5cm] (a){$g$};
\draw ([xshift=-0.3cm]a.south) to [out=-90, in=90] node[pos=1, below] {$y$} ([xshift=-0.6cm, yshift=-0.5cm]a.south) coordinate (f);

\draw ([xshift=-0.3cm]c.south) to [out=-90, in=90] node[pos=1, below] {$x$} ([xshift=-0.5cm]f);
\draw [color=red] ([xshift=0.3cm]a.north) to ([xshift=1cm]d) coordinate (e);
\node[above] at ([xshift=-0.1cm]e) {$R_n \setminus \Lambda_n$};

\end{tikzpicture}
    \end{center}
is unitary.
\end{lemma}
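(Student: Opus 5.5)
We have to show that $\phi_n$ is well defined on the relative tensor product, isometric, and surjective. Write $Z=X^{\otimes R_n}$. Under Corollary \ref{cor:rewrite-hom-space}, the source is $\cB(x\otimes Z, Z)\otimes_{\fA(\Lambda_n)}\cB(y,Z)$, and $\phi_n(f\otimes g)=f\circ(\id_x\otimes g)\in\cB(x\otimes y,Z)=K^{R_n}_{x\otimes y}$. In the picture, the $y$-input of $g$ is braided past nothing, because the $x$ strand of $f$ enters to the left.

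\textbf{Well-definedness.} For $a\in\fA(\Lambda_n)$, the right action on $f$ is precomposition by $\id_x\otimes a$, while the left action on $g$ is postcomposition by $a$. Hence $\phi_n(fa\otimes g)=f(\id_x\otimes ag)=\phi_n(f\otimes ag)$, so $\phi_n$ descends to the relative tensor product. We must also check that the bimodule identification in Corollary \ref{cor:rewrite-hom-space} matches this action. That amounts to checking that the Yoneda isomorphism of Lemma \ref{lemma:yoneda} is natural in the $\End$-actions, which is visible in the diagram defining it.

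\textbf{Isometry.} The inner product on the relative tensor product is $\langle f_1\otimes g_1|f_2\otimes g_2\rangle=\langle (f_2^*f_1)\cdot g_1|g_2\rangle_{K_y}$. Here $f_2^*f_1$ is the element of $\fA(R_n)$ acting on $K_y^{R_n}$, namely the commutant-intertwiner composite. Under the identification, this element is the map $\Hom$-adjoint of $f_2^\dagger f_1$, i.e. the partial trace over $x$: $\tr_x(f_2^\dagger f_1)\in\End(Z)$, suitably normalized so that $\langle f_2^* f_1 \xi|\xi'\rangle$ matches on $K_1$. I would compute $f_2^*f_1$ directly: $f^*$ as an operator $K_x^{R_n}\to K_1^{R_n}$ is determined by the trace inner products, and using $\langle h|h'\rangle=\tr_\cB(h^\dagger h')$ together with sphericality one finds $f^*(k)=(\id_Z\otimes\mathrm{ev})\circ(f^\dagger k\ldots)$, i.e. capping off the $x$ strand. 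Then
\[
\tr_{\cB}\bigl(g_2^\dagger\,\tr_x(f_2^\dagger f_1)\,g_1\bigr)=\tr_\cB\bigl((\id_x\otimes g_2)^\dagger f_2^\dagger f_1(\id_x\otimes g_1)\bigr),
\]
which is $\langle\phi_n(f_2\otimes g_2)|\phi_n(f_1\otimes g_1)\rangle_{K_{x\otimes y}}$. This step requires care with the normalizations of the trace inner products and of the Yoneda isomorphism (dimension factors $d_x$), and is the main obstacle. If a scalar discrepancy appears, it would be absorbed by rescaling the isomorphism of Corollary \ref{cor:rewrite-hom-space}.

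\textbf{Surjectivity.} Since $|R_n|\ge N$, every simple summand of $x\otimes y$ in $\langle X\rangle$ appears in $Z$. Given $h\in\cB(x\otimes y,Z)$, choose an isometry $g=\iota\colon y\to Z$. This exists because $Z$ contains every simple summand of $y$ with enough multiplicity, using $1_\cB\subset X$ to pad (taking $y$ simple suffices by linearity). Then set $f=h\circ(\id_x\otimes\iota^\dagger)$. This gives $\phi_n(f\otimes\iota)=h$. Finally, an isometry between finite-dimensional spaces that is surjective is unitary.
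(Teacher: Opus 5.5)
\textbf{Gap: the surjectivity step works in the wrong domain.} Your well-definedness check and your isometry identity are fine, but the surjectivity step does not prove the lemma.

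An element of $\Hom_{\fA(R_n\setminus\Lambda)}(K_1^{R_n},K_x^{R_n})$ must commute with $\fA(R_n\setminus\Lambda)$. The Yoneda step behind Corollary~\ref{cor:rewrite-hom-space} is applied with $Y=X^{\otimes R_n\setminus\Lambda_n}$. It produces $\cB(x\otimes X^{\otimes\Lambda_n},X^{\otimes\Lambda_n})$, acting by $k\mapsto (f\otimes\id_{X^{\otimes R_n\setminus\Lambda_n}})\circ(\id_x\otimes k)$. This is what every picture in the paper shows: $f$ carries only the $\Lambda_n$ strands, and the $R_n\setminus\Lambda_n$ strands bypass it. So the $R_n$ in the corollary's display has to be read as $\Lambda_n$; the domain is not all of $\cB(x\otimes Z,Z)$.

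Your preimage $f=h\circ(\id_x\otimes\iota^\dagger)$ contains $h$, which in general acts on the $R_n\setminus\Lambda_n$ strands. Hence $k\mapsto h(\id_x\otimes\iota^\dagger k)$ does not commute with $\fA(R_n\setminus\Lambda)$ and is not in the domain. What you have shown is surjectivity from a strictly larger space, which is trivial. The real content of the lemma is that the $x$ strand can be absorbed inside $\Lambda_n$ alone. A symptom of the problem is that your argument never uses that $\Lambda_n$, rather than $R_n$, is large.

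\textbf{The missing idea.} The paper reverses the roles: $g$ carries $h$, and $f$ is a localized absorber.
\begin{itemize}
\item Fix $\mathrm{ev}_x\colon x\otimes\overline x\to 1_\cB$ and $\mathrm{coev}_x\colon 1_\cB\to\overline x\otimes x$ with $(\mathrm{ev}_x\otimes\id_x)(\id_x\otimes\mathrm{coev}_x)=\id_x$.
\item Bend the $x$ strand up: $\tilde h=(\id_{\overline x}\otimes h)\circ(\mathrm{coev}_x\otimes\id_y)\in\cB(y,\overline x\otimes Z)$, so that $h=(\mathrm{ev}_x\otimes\id_Z)\circ(\id_x\otimes\tilde h)$.
\item For $n$ large, $X^{\otimes\Lambda_n}$ contains every simple object of $\langle X\rangle$. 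So there are $e_1,\dots,e_\ell\in\cB(X^{\otimes\Lambda_n},\overline x\otimes X^{\otimes\Lambda_n})$ with $\sum_j e_je_j^\dagger=\id_{\overline x\otimes X^{\otimes\Lambda_n}}$. Take $e_j=s_jt_j^\dagger$, where the $s_j\colon c_j\to\overline x\otimes X^{\otimes\Lambda_n}$ give an orthogonal decomposition into simples and the $t_j\colon c_j\to X^{\otimes\Lambda_n}$ are isometries.
\item Inserting this resolution gives $h=\sum_j\phi_n(f_j\otimes g_j)$. Here $f_j=(\mathrm{ev}_x\otimes\id_{X^{\otimes\Lambda_n}})\circ(\id_x\otimes e_j)$ is genuinely localized in $\Lambda_n$, and $g_j=(e_j^\dagger\otimes\id_{X^{\otimes R_n\setminus\Lambda_n}})\circ\tilde h\in K_y^{R_n}$.
\end{itemize}

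\textbf{On isometry.} No $d_x$ factors appear, and rescaling the corollary's isomorphism would not be allowed anyway, since that isomorphism pins down $\phi_n$. From $\langle h|h'\rangle=\tr_\cB(h^\dagger h')$, the factor $k^\dagger$ slides out of the closed $x$ loop. So $f^*$ is exactly the partial trace $h\mapsto\mathrm{ptr}_x\bigl((f^\dagger\otimes\id)h\bigr)$. Hence $f_2^*f_1=\mathrm{ptr}_x(f_2^\dagger f_1)\in\fA(\Lambda_n)$, and your displayed identity holds exactly, with no normalization needed.
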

\begin{proof}
    That $\phi_n$ is isometric can be verified using the graphical calculus:

\begin{center}
\begin{tikzpicture}[baseline={(current bounding box.center)},thick]
\draw(0,0) node[draw=black,fill=white,very thick,rounded corners,baseline=center, inner xsep=0.3cm] (a){$f_2$};
\draw ([yshift=1cm]a.north) node[draw=black,fill=white,very thick,rounded corners,baseline=center, inner xsep=0.3cm] (b){$f_1^*$};
\draw([xshift=0.45cm, yshift=-1cm]a) node[draw=black,fill=white,very thick,rounded corners,baseline=center, inner xsep=0.3cm] (c){$g_2$};
\draw([xshift=0.45cm, yshift=1cm]b) node[draw=black,fill=white,very thick,rounded corners,baseline=center, inner xsep=0.3cm] (d){$g_1^*$};

\draw ([xshift=-0.2cm]c.south) to [out=-155, in=-90] node[pos=1, left, color=black] {$\overline y$} ([xshift=-1.5cm, yshift=0.35cm]a.north) coordinate (e);
\draw (e) to [out=90, in=155] ([xshift=-0.2cm]d.north);

\draw ([xshift=-0.2cm]a.south) to [out=-135, in=-90] node[pos=1, left, color=black] {$\overline x$} ([xshift=-0.9cm, yshift=0.35cm]a.north) coordinate (f);
\draw (f) to [out=90, in=135] ([xshift=-0.2cm]b.north);

\draw [color = blue] ([xshift=0cm]a.north) to node[pos=0.6, left, color=black] {$\Lambda_n$} ([xshift=0cm]b.south);
\draw [color = blue] ([xshift=0.15cm]a.north) to ([xshift=0.15cm]b.south);
\draw [color = blue] ([xshift=0.15cm]b.north) to ([xshift=-0.3cm]d.south);
\draw [color = blue] ([xshift=0.3cm]b.north) to ([xshift=-0.15cm]d.south);
\draw [color = blue] ([xshift=0.15cm]a.south) to ([xshift=-0.3cm]c.north);
\draw [color = blue] ([xshift=0.3cm]a.south) to ([xshift=-0.15cm]c.north);
\draw [color = red] ([xshift=0.3cm]c.north) to node[pos=0.5, right, color=black] {$R_n \setminus \Lambda_n$} ([xshift=0.3cm]d.south);

\node[] at ([xshift=-4.8cm, yshift=0.75cm]a) {$\langle \phi_n(f_1 \otimes g_1) \, | \, \phi_n(f_2 \otimes g_2) \rangle_{K_{x \otimes y}} = $};

\node[] at ([xshift=4.15cm, yshift=0.65cm]a) {$ = \langle \langle f_2 | f_1 \rangle \cdot g_1 \, | \, g_2 \rangle_{K_y}$};
\end{tikzpicture}
\end{center}

    To see that $\phi_n$ is surjective, observe that $X^{\otimes R_n}$ containing a copy of every simple object implies that there exist $e_1,\dots,e_\ell\in\cB(X^{\otimes R_n},\overline{x}\otimes X^{\otimes R_n})$ such that $\sum_{j=1}^\ell e_j\circ (e_j)^*=\Id_{\overline{x}\otimes X^{\otimes R_n}}$. So, every $g \in K_{x\otimes y}^{R_n}$ can be written as follows:
\begin{center}
    \begin{tikzpicture}[baseline={(current bounding box.center)},thick]
\draw(0,0) node[draw=black,fill=white,very thick,rounded corners,baseline=center, inner xsep=0.5cm] (a){$g$};
\draw [color=blue] ([xshift=-0.3cm]a.north) --++ (0, 1);
\draw [color=blue] ([xshift=-0.15cm]a.north) --++ (0, 1);
\draw [color=red] ([xshift=0.3cm]a.north) --++ (0.3, 1);
\node[above] at ([xshift=-0.35cm, yshift=1cm]a.north) {$\Lambda_n$};
\node[above] at ([xshift=0.8cm, yshift=1cm]a.north) {$R_n \setminus \Lambda_n$};
\draw ([xshift=-0.25cm]a.south) --node[pos=1, below] {$x$}++ (0, -1);
\draw ([xshift=0.25cm]a.south) --node[pos=1, below] {$y$}++ (0, -1);

\node[] (z) at ([xshift=1.25cm]a) {$=$};

\draw([xshift=2cm]z) node[draw=black,fill=white,very thick,rounded corners,baseline=center, inner xsep=0.5cm] (a){$g$};
\draw [color=blue] ([xshift=-0.3cm]a.north) --++ (0, 1);
\draw [color=blue] ([xshift=-0.15cm]a.north) --++ (0, 1);
\draw [color=red] ([xshift=0.3cm]a.north) --++ (0.3, 1);
\node[above] at ([xshift=-0.35cm, yshift=1cm]a.north) {$\Lambda_n$};
\node[above] at ([xshift=0.8cm, yshift=1cm]a.north) {$R_n \setminus \Lambda_n$};
\draw ([xshift=0.25cm]a.south) --node[pos=1, below] {$y$}++ (0, -1) coordinate (g);
\draw ([xshift=-0.25cm]a.south) to [out=-90, in=0] ([xshift=-0.55cm, yshift=-0.5cm]a.south) coordinate (b);
\draw (b) to [out=180, in=-90] ([xshift=-0.85cm]a.south) coordinate (c);
\draw (c) to [out=90, in=-90] ([xshift=-0.85cm, yshift=0.5cm]a.north) coordinate (d);
\draw (d) to [out=90, in=0] ([xshift=-1.15cm, yshift=1cm]a.north) coordinate (e);
\draw (e) to [out=180, in=90] ([xshift=-1.45cm, yshift=0.5cm]a.north) coordinate (f);
\draw (f) to node[pos=1, below] {$x$} ([xshift=-1.7cm]g);

\node[] (z) at ([xshift=1.25cm]a) {$= \sum \limits_{j = 1}^\ell$};

\draw([xshift=2.25cm, yshift=-1cm]z) node[draw=black,fill=white,very thick,rounded corners,baseline=center, inner xsep=0.5cm] (a){$g$};
\draw [color=red] ([xshift=0.3cm]a.north) --++ (0.3, 2.5);
\node[above] at ([xshift=-0.35cm, yshift=2.5cm]a.north) {$\Lambda_n$};
\node[above] at ([xshift=0.8cm, yshift=2.5cm]a.north) {$R_n \setminus \Lambda_n$};
\draw ([xshift=0.25cm, ]a.south) --node[pos=1, below] {$y$}++ (0, -1) coordinate (g);

\draw ([xshift=-0.25cm]a.south) to [out=-90, in=0] node[pos=0.7, right] {$x$} ([xshift=-0.55cm, yshift=-0.5cm]a.south) coordinate (b);
\draw (b) to [out=180, in=-90] ([xshift=-0.85cm]a.south) coordinate (c);
\draw(c) to [out=90, in=-90] ([xshift=-0.85cm, yshift=1cm]a.north) coordinate (d);
\draw([yshift=0.5cm]d) to [out=90, in=-90] ([xshift=-0.85cm, yshift=2cm]a.north) coordinate (y);
\draw (y) to [out=90, in=0] ([xshift=-1.15cm, yshift=2.5cm]a.north) coordinate (e);
\draw (e) to [out=180, in=90] ([xshift=-1.45cm, yshift=2cm]a.north) coordinate (f);
\draw (f) to node[pos=1, below] {$x$} ([xshift=-1.7cm]g);

\draw [color=blue] ([xshift=-0.3cm]a.north) --++ (0, 1);
\draw [color=blue] ([xshift=-0.3cm, yshift=1.75cm]a.north) --++ (0, 0.75);
\draw [color=blue] ([xshift=-0.15cm]a.north) --++ (0, 1);
\draw [color=blue] ([xshift=-0.15cm, yshift=1.75cm]a.north) --++ (0, 0.75);

\draw([xshift=-0.5cm, yshift=1cm]a) node[draw=black,fill=white,very thick,rounded corners,baseline=center, inner xsep=0.5cm] (h){$e_j^*$};
\draw([yshift=1cm]h) node[draw=black,fill=white,very thick,rounded corners,baseline=center, inner xsep=0.5cm] (i){$e_j$};

\draw [color=blue] ([xshift=-0.075cm]h.north) to ([xshift=-0.075cm] i.south);
\draw [color=blue] ([xshift=0.075cm]h.north) to ([xshift=0.075cm] i.south);

\end{tikzpicture}
\end{center}
\end{proof}

\begin{construction}
    We now use $\{\phi_n\}$ to construct our tensorator $\phi:K_x({\Lambda})\boxtimes K_y\to K_{x\otimes y}$. Each $f\in \cB(x\otimes X^{\otimes R_n}, X^{\otimes R_n})$ includes into $\cB(x\otimes X^{\otimes R_{n+1}}, X^{\otimes R_{n+1}})$ as follows:
    \begin{center}
        \begin{tikzpicture}[baseline={(current bounding box.center)},thick]
\draw(0,0) node[draw=black,fill=white,very thick,rounded corners,baseline=center, inner xsep=0.5cm] (a){$f$};
\draw [color = blue]([xshift=-0.3cm]a.north) --++ (0, 1) coordinate (c);
\draw [color = blue]([xshift=-0.45cm]a.north) --++ (0, 1);
\draw [color = blue]([xshift=0cm]a.south) --++ (0, -1);
\draw [color = blue]([xshift=0.15cm]a.south) --++ (0, -1);
\draw [color = blue]([xshift=-0.75cm, yshift=1cm]a.north) --++ (0, -2.65);
\draw [color = blue]([xshift=-0.9cm, yshift=1cm]a.north) coordinate (b) --++ (0, -2.65);
\draw [knot] ([xshift=-0.3cm]a.south) to [out=-90, in=90] ([xshift=-1.1cm, yshift=-1cm]a.south) coordinate (z);
\node[below] at (z) {$x$};
\node[below] at ([xshift=0.15cm, yshift=-1cm]a.south) {$R_n$};
\draw [decorate,decoration={brace,amplitude=5pt,mirror,raise=0ex}]
  ([xshift=0.15cm]c) -- ([xshift=-0.15cm]b) node[midway,yshift=1em]{$R_{n+1}$};
\end{tikzpicture}
    \end{center}

    This inclusion process allows $f$ to act on the dense subset $\bigcup_j K_1^{R_j}\subset K_1$. We thus obtain embeddings 
    \[
    \iota_n:\Hom_{\mathfrak{A}(R_n\setminus \Lambda)}(K_1^{R_n},K_x^{R_n})\to\Hom_{A_{\Lambda^c}}(K_1,K_x)
    \]
    Let $p_n\in B(K_x)$ be the projection onto $K_x(R_n)$. For any $F\in\Hom_{A_{\Lambda^c}}(K_1,K_x)$, set 
    \[
    f_n=p_n\circ F|_{K_1(R_n)}\in \Hom_{\mathfrak{A}(R_n\setminus \Lambda)}(K_1(R_n),K_x(R_n)).
    \]
    Then $\iota_n(f_n)$ is a (bounded) sequence that converges to $F$ strongly. 
    By Lemma \ref{lemma:dense-subsets-of-Connes-fusion}, $\bigcup_n\Im(\iota_n)\otimes K_y(R_n)$ is dense in $K_x(\Lambda)\boxtimes K_y$ (here, we are identifying $K_y(R_n)$ with its image in $K_y$). Note that $\{\phi_n\}$ is compatible with our inclusion maps, i.e., the below diagram commutes
    \[\begin{tikzcd}[column sep=2em, row sep=1em]
	{\Hom_{\mathfrak{A}(R_n\setminus \Lambda)}(K_1^{R_n},K_x^{R_n})\otimes_{\mathfrak{A}(R_n)}K_y^{R_n}} && {\Hom_{\mathfrak{A}(R_{n+1}\setminus \Lambda)}(K_1^{R_{n+1}},K_x^{R_{n+1}})\otimes_{\mathfrak{A}(R_{n+1})}K_y^{R_{n+1}}} \\
	\\
	{K_{x\otimes y}^{R_n}} && {K_{x\otimes y}^{R_{n+1}}}
	\arrow[from=1-1, to=1-3]
	\arrow["{\phi_n}"', from=1-1, to=3-1]
	\arrow["{\phi_{n+1}}", from=1-3, to=3-3]
	\arrow[from=3-1, to=3-3]
\end{tikzcd}\]
    We thus obtain a unitary $\phi:K_x({\Lambda})\boxtimes K_y\to K_{x\otimes y}$ that restricts to $\phi_n$ for each square $R_n$ (that is, on each subspace $\Hom_{\mathfrak{A}(R_n\setminus \Lambda)}(K_1(R_n),K_x(R_n))\otimes_{\mathfrak{A}(\Lambda_n)}K_y(R_n)$).
\end{construction}

\begin{proposition}
    The unitary $\phi:K_x({\Lambda})\boxtimes K_y\to K_{x\otimes y}$ intertwines the net of algebras $\cA$.
\end{proposition}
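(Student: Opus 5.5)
We need to show that $\phi$ intertwines the action of $A_\Delta$ on $K_x(\Lambda)\boxtimes K_y$ with the action on $K_{x\otimes y}$, for every cone $\Delta$. Our plan is to reduce to two special cones, $\Delta=\Lambda$ and $\Delta=\Lambda'$, and then use the structure of the action from Proposition~\ref{prop:action-from-a-b-c-d} to handle a general cone. By construction (Remark~\ref{remark:natural-actions}), $x\in A_\Lambda$ acts by $f\otimes\eta\mapsto \pi_\Lambda(x)f\otimes\eta$, and $y\in A_{\Lambda'}$ acts by $f\otimes\eta\mapsto f\otimes\sigma_{\Lambda'}(y)\eta$.

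The first step is to check intertwining for $A_\Lambda$ and $A_{\Lambda'}$ on the dense subspace $\bigcup_n\Im(\iota_n)\otimes K_y(R_n)$. By norm-continuity, and since the local algebras $\fA(\Lambda_m)$ and $\fA(\Lambda'_m)$ are strongly dense, it suffices to treat local operators. Let $a\in\fA(\Lambda_m)$ with $m\le n$, included into $\fA(R_n)$ by adding identity strands. Then $a$ acts on $f\in\cB(x\otimes X^{\otimes R_n},X^{\otimes R_n})$ by postcomposition on the $\Lambda_n$ strands. In the graphical formula for $\phi_n$, $f$ sits above $g$, so postcomposing with $a$ commutes with $\phi_n$ by isotopy.

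For $b\in\fA(\Lambda'_m)\subset\fA(R_n\setminus\Lambda)$, $b$ acts on $g$ by postcomposition on the $R_n\setminus\Lambda_n$ strands. In $\phi_n(f\otimes g)$ these strands pass through untouched by $f$, since $f$ only acts on the $\Lambda_n$ strands together with the $x$ strand. Hence $b$ can be slid to the top, which gives $\phi_n(f\otimes bg)=b\,\phi_n(f\otimes g)$. Some care is needed with the braiding used in the inclusion $\fA(\Lambda'_m)\hookrightarrow\fA(R_n)$ and with how the $x$ strand crosses the region strands. The conventions of \eqref{eq:InclusionOfDiskLikeRegions} and the picture defining $\iota_n$ should make these crossings consistent: the $x$ strand always passes under the same strands in both pictures, so naturality of the braiding lets $b$ pass. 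This is the step I expect to be the main obstacle, namely checking that the over/under conventions match so that the $\Lambda'$ operators really slide past the $x$ strand.

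The second step is to pass from $\Lambda$ and $\Lambda'$ to an arbitrary cone $\Delta$. By Proposition~\ref{prop:action-from-a-b-c-d}, the action of $A_\Delta$ is obtained by transporting along a short zig-zag, either $\Lambda\le a\ge\Delta$ or $\Lambda\ge d\le\Delta'$. Transport is by the unitaries $\iota$, which are compatible with $\phi$: the analogous tensorators $\phi^{(a)}:K_x(a)\boxtimes K_y\to K_{x\otimes y}$ for the cones $a$ and $d$ make the triangles with the inclusion maps commute. This compatibility is immediate on the dense subspace, because the inclusions are literally the identity on representatives $f\otimes\eta$.

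Given this, in the case $\Lambda\le a\ge\Delta$ we have $A_\Delta\subset A_a$, and the previous step applied to $a$ shows that $\phi^{(a)}$ intertwines $A_a$; hence $\phi$ intertwines $A_\Delta$. In the case $\Lambda\ge d\le\Delta'$ we have $A_\Delta\subset A_{d'}$, and the previous step applied to $d$ shows that $\phi^{(d)}$ intertwines $A_{d'}$; hence $\phi$ intertwines $A_\Delta$. Lemma~\ref{lemma:saturation-diagonal-pair} guarantees that one of these two cases occurs.

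Since the construction of $\phi$ and the first step work for any cone in place of $\Lambda$, this completes the argument.
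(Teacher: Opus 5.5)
Your architecture matches the paper's proof. You first check that $\phi$ intertwines $A_\Lambda$ and $A_{\Lambda'}$ on the dense local subspace. You then use naturality of $\phi$ under inclusions of cones, together with the short zig-zags of Lemma~\ref{lemma:saturation-diagonal-pair} and Proposition~\ref{prop:action-from-a-b-c-d}, to reach an arbitrary $\Delta$. Your case analysis ($\Lambda\le a\ge\Delta$ and $\Lambda\ge d\le\Delta'$) is correct and more explicit than the paper, which leaves that deduction implicit.

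The gap is in the naturality triangle $\phi^{(a)}\circ\iota_{\Lambda,a}=\phi^{(\Lambda)}$ (and likewise for $d\subseteq\Lambda$). You call it immediate ``because the inclusions are literally the identity on representatives $f\otimes\eta$.'' That only reduces the triangle to the equality $\phi^{(a)}(f\otimes\eta)=\phi^{(\Lambda)}(f\otimes\eta)$ for $f$ localized in $\Lambda$; it does not prove that equality. The two maps are built from different data:
\begin{itemize}
\item $\phi^{(\Lambda)}_n$ uses the identification of Corollary~\ref{cor:rewrite-hom-space} for the cone $\Lambda$. This rewrites $f$ as a morphism $\tilde f^{\Lambda}$ on the $\Lambda_n$ strands, with the $x$ strand entering it, after $X^{\otimes R_n}$ has been braided into the order $\Lambda_n\,|\,R_n\setminus\Lambda_n$.
\item $\phi^{(a)}_n$ rewrites the same $f$ as $\tilde f^{a}$ on the $a_n$ strands, with respect to the order $a_n\,|\,R_n\setminus a_n$.
\end{itemize}
Comparing the two graphical formulas requires relating these orderings by a braid, and that is exactly the content of the paper's proof. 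One braids to a common ordering $\Lambda_n\,|\,a_n\setminus\Lambda_n\,|\,R_n\setminus a_n$. In that ordering $\tilde f^{a}=\tilde f^{\Lambda}\otimes\id$, and the $x$ strand does not cross the $a_n\setminus\Lambda_n$ strands, so the two formulas visibly coincide. Without this, your Step 2 rests on an unproved claim.

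The braiding issue you flag in Step 1 as the main obstacle is essentially harmless. In the ordering $\Lambda_n\,|\,R_n\setminus\Lambda_n$ used to define $\phi_n$, the $x$ strand meets only the box $\tilde f$ on the $\Lambda_n$ side and never crosses the complement strands. So $b\in\fA(\Lambda'_m)$ slides to the top by the interchange law, which the paper records in one line. The over/under bookkeeping for the $x$ strand actually matters in the change-of-cone step, where you did not do it.
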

\begin{proof}
    Since each $\phi_n$ intertwines $\mathfrak{A}(\Lambda_n)$ and $\mathfrak{A}(R_n\setminus\Lambda)$, it follows that $\phi$ intertwines $A_\Lambda$ and $A_{\Lambda^c}$. Moreover, $\phi$ is natural with respect to the choice of cone $\Lambda$. That is, given an inclusion of cones $\Lambda\subset\Delta$, the diagram below commutes.
    \[\begin{tikzcd}[column sep=.5em, row sep=.5em]
	{K_x({\Lambda})\boxtimes K_y} && {K_x({\Delta})\boxtimes K_y} \\
	& {K_{x\otimes y}}
	\arrow[from=1-1, to=1-3]
	\arrow[from=1-1, to=2-2]
	\arrow[from=1-3, to=2-2]
\end{tikzcd}\]
This can be seen by moving to orderings of $R_n$ that send points in $\Lambda$ to the left, followed by points in $\Delta\setminus\Lambda$ to the middle, and points in $R_n\setminus\Delta$ to the right. 
We achieve this arrangement of strands by composing with the appropriate element of the braid group, or its inverse.
\end{proof}

Since $\mathsf{Hilb}\langle X \rangle$ allows for multiplicity spaces, we need to take them into account when defining our tensorator. The following lemma essentially allows us to move the multiplicity spaces out of the way. We prove this for a general net of algebras $\cA \subset B(H)$ since the proof does not rely upon the structure of the braided categorical.
\begin{lemma}
    Let $K$ and $L$ be superselection sectors for the net of algebras $\cA$. 
    Let $H_1$ and $H_2$ be Hilbert spaces, and consider the superselection sectors $H_1 \otimes K$ and $H_2 \otimes L$ where each $H_i$ acts as a multiplicity space. 
    Then $(H_1 \otimes K)(p) \boxtimes (H_2 \otimes L)$ and $(H_1 \otimes H_2) \otimes (K(p)\boxtimes L)$ are isomorphic superselection sectors via the unitary $$\theta((\xi \otimes f) \boxtimes (\zeta \otimes \eta))=(\xi \otimes \zeta) \otimes (f \boxtimes \eta),$$
    where $\xi\otimes f$ denotes the map $\chi\mapsto \xi\otimes f(\chi)$.
\end{lemma}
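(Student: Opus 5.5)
We first check that $\theta$ is well defined and isometric on the algebraic spanning set. By Lemma~\ref{lemma:dense-subsets-of-Connes-fusion}, it suffices to work with the subspace $X := \operatorname{span}\{\xi\otimes f\}$, where $\xi\in H_1$ and $f\in\Hom_{A_{p'}}(H,K)$. This subspace sits inside $\Hom_{A_{p'}}(H,H_1\otimes K)$. Fix an orthonormal basis $(e_i)$ of $H_1$. Any $F\in\Hom_{A_{p'}}(H,H_1\otimes K)$ decomposes as $F=\sum_i e_i\otimes F_i$ with $F_i=(e_i^*\otimes 1)F\in\Hom_{A_{p'}}(H,K)$. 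The finite partial sums form a norm-bounded net converging strongly to $F$. Hence $X\otimes(H_2\odot L)$ is dense in $(H_1\otimes K)(p)\boxtimes(H_2\otimes L)$. For the inner products, since $(\xi_2\otimes f_2)^*(\xi_1\otimes f_1)=\langle \xi_1|\xi_2\rangle f_2^*f_1$ and the action on $H_2\otimes L$ is $1\otimes\sigma$, we get
\[
\langle(\xi_1\otimes f_1)\otimes(\zeta_1\otimes\eta_1)|(\xi_2\otimes f_2)\otimes(\zeta_2\otimes\eta_2)\rangle
=\langle\xi_1|\xi_2\rangle\langle\zeta_1|\zeta_2\rangle\langle\sigma_p(f_2^*f_1)\eta_1|\eta_2\rangle .
\]
This is exactly the inner product of $(\xi_1\otimes\zeta_1)\otimes(f_1\otimes\eta_1)$ with $(\xi_2\otimes\zeta_2)\otimes(f_2\otimes\eta_2)$. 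So $\theta$ preserves the sesquilinear forms on finite sums, hence kills null vectors, and extends to an isometry. Its range contains the dense span of the elementary tensors $(\xi\otimes\zeta)\otimes(f\otimes\eta)$, so $\theta$ is unitary.

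Next we show that $\theta$ intertwines the net. On the left-sided fusion over $p$, the actions of $A_p$ and $A_{p'}$ are the natural ones from Remark~\ref{remark:natural-actions}: $x\in A_p$ acts by $(1\otimes\pi_p(x))(\xi\otimes f)=\xi\otimes\pi_p(x)f$, and $y\in A_{p'}$ acts by $1\otimes\sigma_{p'}(y)$ on the second factor. Both visibly commute with $\theta$. For a general $q$, Proposition~\ref{prop:action-from-a-b-c-d} says the action is $\alpha_*^{-1}x\alpha_*$ along a short zig-zag, such as $p\le a\ge q$ or $p\ge d\le q'$. It therefore suffices to show that $\theta$ is natural in $p$. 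That is, for $p_1\le p_2$, the canonical inclusions $\iota$ on both sides should satisfy $\theta_{p_2}\iota=(1\otimes\iota)\theta_{p_1}$. This holds on elementary tensors by definition, since the inclusions act as the identity on symbols. Combining this naturality with the intertwining at $A_a$ (resp.\ $A_{d'}$), the action of $A_q$ transported along the zig-zag is intertwined. The multiplicity space $H_1\otimes H_2$ carries the trivial action throughout.

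The main point to be careful about is the density claim in the first step, which relies on Lemma~\ref{lemma:dense-subsets-of-Connes-fusion} applied to the decomposition $F=\sum_i e_i\otimes F_i$ with uniformly bounded partial sums. Once that is in place, the inner-product computation and the naturality in $p$ are routine.
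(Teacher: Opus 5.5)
Your proposal is correct and follows the paper's proof. The density argument is the same: your partial sums $\sum_i e_i\otimes F_i$ are exactly the paper's $(\sum_i p_{e_i}\otimes \id_K)F$, fed into Lemma~\ref{lemma:dense-subsets-of-Connes-fusion}, and the inner-product computation matches. Your naturality-in-$p$ argument, transporting the $A_q$-action along $p\le a\ge q$ or $p\ge d\le q'$, is a valid expansion of the intertwining step that the paper dismisses as ``easily seen.''
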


\begin{proof}
    We first claim that elements of the form $(\xi \otimes f) \boxtimes (\zeta \otimes \eta)$ have dense span in $(H_1 \otimes K)(p) \boxtimes (H_2 \otimes L)$, where $\xi \in H_1, \zeta \in H_2, f \in \Hom_{A_p'}(H, K)$, and $\eta \in L$. To see this, given an orthonormal basis $e_1,e_2,\dots$ of $H_1$, note that $\sum_{i=1}^\infty p_{e_i} \otimes \id_{K}$ converges to $\id_{H_1 \otimes K}$ in the strong operator topology. Furthermore, each of these projections is a morphism in $\SSS$ since $\cA$ acts solely on the second tensor factor. Therefore, by Lemma~\ref{lemma:dense-subsets-of-Connes-fusion}, elements of the stated form are norm-dense as required.

    We next compute the adjoint of the operator $\xi \otimes f \in \Hom(H, H_1 \otimes K)$. For any $\chi \in H$ and $\alpha \otimes \beta \in H_1 \otimes K$, 
    \begin{align*}
        \langle (\xi \otimes f)(\chi)| \alpha \otimes \beta\rangle &= \langle \xi \otimes f(\chi)|\alpha \otimes \beta\rangle\\
        &= \langle \xi|\alpha\rangle \langle f(\chi)|\beta\rangle\\
        &=\langle \chi| \langle \xi|\alpha\rangle f^*(\beta)\rangle,
    \end{align*}
    so $(\xi \otimes f)^*(\alpha\otimes\beta)=\langle \xi|\alpha\rangle f^*(\beta)$. We now check that $\theta$ is preserves inner products on a dense subset:
    \begin{align*}
        \langle (\xi_1\otimes\zeta_1) \otimes (f_1 \boxtimes \eta_1)|(\xi_2 \otimes \zeta_2)\otimes (f_2 \boxtimes \eta_2)\rangle &=\langle \xi_1 |\xi_2\rangle \langle \zeta_1|\zeta_2\rangle \langle \eta_1| (f_1^* f_2)\cdot \eta_2\rangle\\
        &=\langle \zeta_1|\zeta_2\rangle \langle \eta_1|  f_1^*f_2\cdot \langle \xi_1|\xi_2\rangle \eta_2\rangle \\
        &=\langle \zeta_1 \otimes \eta_1|(\langle \xi_1|\xi_2\rangle f_1^*f_2) \cdot(\zeta_2 \otimes \eta_2)\rangle\\
        &= \langle \zeta_1 \otimes \eta_1|((\xi_1\otimes f_1)^*\circ (\xi_2\otimes f_2)) \cdot(\zeta_2 \otimes \eta_2)\rangle\\
        &= \langle(\xi_1 \otimes f_1) \boxtimes (\zeta_1 \otimes \eta_1)|(\xi_2 \otimes f_2) \boxtimes (\zeta_2 \otimes \eta_2)\rangle.
    \end{align*}
    Thus, $\theta$ is well-defined and extends to a unitary on $(H_1 \otimes K)(p) \boxtimes (H_2 \otimes L)$. Furthermore, $\theta$ is easily seen to be an intertwiner in $\SSS$.
\end{proof}

We now define the tensorator on isotypic components for the functor $\cK$ as follows and extend by linearity:
\begin{equation}
\label{eq:TensoratorForEquivalence}
(\id_{H_x \otimes H_y}\otimes \mu_{x,y})\circ \theta \colon (H_1 \otimes \cK_x)(\Lambda)\boxtimes (H_2 \otimes \cK_y) \to (H_1 \otimes H_2) \otimes \cK_{x \otimes y}.
\end{equation}
Finally, we can prove Thm \ref{thm:B}.

\begin{theorem}
When endowed with tensorator \eqref{eq:TensoratorForEquivalence}, the $\rmW^*$-equivalence \eqref{eq:EquivalenceForBraidedNet} from \cite[Theorem~4.9]{2506.19969} 
\[
\cK: \Hilb\langle X\rangle^{\rm op} \to \mathsf{SSS}(A)
\]
is monoidal and braided, and therefore an equivalence of braided monoidal categories.
\end{theorem}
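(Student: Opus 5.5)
Since $\cK$ is already known to be a $\rmW^*$-equivalence by \cite[Theorem~4.8]{2506.19969}, I only need to check that the unitary tensorator \eqref{eq:TensoratorForEquivalence} is natural, satisfies the monoidal coherence axioms, and is compatible with the braidings. I will use the left-sided Connes fusion $K(\Lambda)\boxtimes L$ over a fixed cone $\Lambda$. By Remark~\ref{remark:left-right-monoidal-equivalence} and the remark following the braiding construction, this is braided equivalent to $(\SSS,\boxtimes_p)$, and Theorem~\ref{thm:A} identifies that with $\SSS_p$. Because $\theta$ is a natural unitary intertwiner that splits off multiplicity spaces, and $\cK$ is additive, every verification reduces to the isotypic pieces $\cK_x,\cK_y,\cK_z$ for simple $x,y,z\in\langle X\rangle$, with $\mu_{x,y}=\phi$ the unitary constructed above. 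The reversal of order in $\Hilb\langle X\rangle^{\rm op}$ only means that a morphism $h\in\cB(x',x)$ acts on $\cK_x=\varinjlim\cB(x,X^{\otimes R})$ by precomposition. So naturality of $\phi$ in $x$ and $y$ amounts to the graphical fact that precomposing the $x$- and $y$-strands commutes with stacking $f$ on top of $g$. This is immediate from the pictures defining $\phi_n$, passed to the limit via the compatibility square.

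For the associativity axiom, I will compare the two routes from $(\cK_x(\Lambda)\boxtimes\cK_y)(\Lambda)\boxtimes\cK_z$ to $\cK_{x\otimes y\otimes z}$. By Lemma~\ref{lemma:dense-subsets-of-Connes-fusion} it suffices to test them on a dense set of elements $(f\otimes f')\otimes g$ with $f,f'$ in the images $\Im(\iota_n)$ and $g\in\cK_z(R_n)$ for a single large $n$. In the finite region $R_n$, both routes produce the same vertical stack $f\circ(\id_x\otimes f')\circ(\id_{x\otimes y}\otimes g)$ in the graphical calculus, so they agree exactly. This is the step where the associator of Proposition~\ref{prop:sss-monoidal} has to be unwound explicitly, since it sends $\eta\otimes g\mapsto \eta\otimes(g(-)\otimes g')$. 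The unit axioms follow in the same way: $\cK_{1}$ is the vacuum, $\phi_n$ with $x=1$ or $y=1$ is just composition, and this matches $\lambda$ and $\rho$.

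The braiding compatibility is the step I expect to be the main obstacle. I will choose a splitting $(r,s)$ of $\Lambda$ into two disjoint subcones and use the formula $\beta\circ(f(-)\otimes g)=g(-)\otimes f$ for $f\in\Hom_{A_{r'}}(H,K)$ and $g\in\Hom_{A_{s'}}(H,L)$. For $K=\cK_x$ and $L=\cK_y$, I will take $f\in\Im(\iota_n)$ supported on $r_n$ and $g$ supported on $s_n$. After reordering the strands of $R_n$ with a braid so that the sites in $r$ come before those in $s$, the elements $\phi(f\otimes g(\xi))$ and $\phi(g\otimes f(\xi))$ differ by passing the $x$-strand past the $y$-strand. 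Since $f$ and $g$ act on disjoint sets of sites, this is exactly one crossing $c_{x,y}^{\pm1}$ of $\cB$, with the sign determined by the relative position of $r$ and $s$ and by the op-convention. The care here lies in matching this crossing with the braiding of $\Hilb\langle X\rangle^{\rm op}$, which may force replacing $(r,s)$ by $(s,r)$ (this is allowed up to reversal), and in checking that the braid used to reorder sites is compatible with the inclusion conventions \eqref{eq:InclusionOfDiskLikeRegions}. Once the identity holds on this dense set, it holds everywhere by continuity, and together with the previous steps this proves the theorem.
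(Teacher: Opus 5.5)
Your proposal is correct and follows essentially the same route as the paper. You use $\theta$ to reduce to multiplicity $\mathbb{C}$, then check the associativity and unit diagrams on norm-dense simple tensors, where both paths give the same stacked diagram in $\cB$, and check the braiding square on elements $f^r\otimes g^s(\xi)$ for a splitting of $\Lambda$, where the two images differ by a single crossing of the $x$- and $y$-strands. Your explicit check that the tensorator is natural, and your remark that $(r,s)$ may need to be swapped to match the braiding convention on $\Hilb\langle X\rangle^{\rm op}$, address points the paper leaves implicit.
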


\begin{proof}
We begin by showing monoidality. 
Because the map $\theta$ moves the multiplicity spaces $H_x$ and $H_y$ to the left hand side, it suffices to show that the following diagram commutes when we take the multiplicity spaces to be $\mathbb C$.

\begin{center}
    \begin{tikzcd}[column sep=scriptsize]
        ((H_x \otimes \cK_x)(\Lambda) \boxtimes (H_y \otimes \cK_y))(\Lambda) \boxtimes (H_z\otimes \cK_z) \arrow[r, "\alpha_{\SSS}"] \arrow[d, "\phi \boxtimes \id"'] & (H_x \otimes \cK_x)(\Lambda) \boxtimes ((H_y \otimes \cK_y)(\Lambda) \boxtimes (H_z \otimes \cK_z)) \arrow[d, "\id\otimes \phi"'] \\
        ((H_x \otimes H_y)\otimes \cK_{xy})(\Lambda)\boxtimes (H_z \otimes \cK_z) \arrow[d, "\phi"] & (H_x \otimes \cK_x)(\Lambda)\boxtimes (H_y\otimes H_z \otimes \cK_{yz}) \arrow[d, "\phi"']\\
        ((H_x \otimes H_y)\otimes H_z) \otimes \cK_{(xy)z} \arrow[r, "\cK(\alpha_{\Hilb\langle X \rangle})"]& (H_x \otimes (H_y \otimes H_z))\otimes \cK_{x(yz)}
    \end{tikzcd}
\end{center}

Thus, when $H_x=H_y=H_z=\mathbb C$, we see that the images of a simple tensor $(f_x\boxtimes f_y(-))\boxtimes g_z$ under the top right and bottom left paths, for intertwiners $f_x \in \Hom_{\mathfrak{A}(R_n\setminus \Lambda)}(\cK_1(R_n), \cK_x(R_n))$ and $ f_y \in \Hom_{\mathfrak{A}(R_n)\setminus \Lambda)}(\cK_1(R_n), \cK_y(R_n))$, and $g_z \in \cK_z(R_n)$, are both

\begin{center}
    \begin{tikzpicture}[baseline={(current bounding box.center)}, thick]
\draw(0,0) node[draw=black,fill=white,very thick,rounded corners,baseline=center, inner xsep=0.5cm] (c){$f_y$};
\draw [color = blue]([xshift=0cm]c.north) --++ (0, 0.5) coordinate (d);
\draw [color = blue]([xshift=-0.15cm]c.north) --++ (0, 0.5);
\draw [color = blue]([xshift=0.15cm]c.south) --++ (0, -0.5);
\draw [color = blue]([xshift=0.3cm]c.south) --++ (0, -0.5);
\node[above] at ([xshift=-0.1cm, yshift=0.5cm]c.north) {$\Lambda_n$};

\draw([xshift=0.5cm, yshift=-1cm]c) node[draw=black,fill=white,very thick,rounded corners,baseline=center, inner xsep=0.5cm] (a){$g$};

\draw([xshift=-0.5cm, yshift=1cm]c) node[draw=black,fill=white,very thick,rounded corners,baseline=center, inner xsep=0.5cm] (b){$f_x$};

\draw ([xshift=-0.3cm]a.south) to [out=-90, in=90] node[pos=1, below] {$z$} ([xshift=-0.6cm, yshift=-0.5cm]a.south) coordinate (f);

\draw ([xshift=-0.3cm]c.south) to [out=-90, in=90] node[pos=1, below] {$y$} ([xshift=-0.5cm]f);
\draw ([xshift=-0.3cm]b.south) to [out=-90, in=90] node[pos=1, below] {$x$} ([xshift=-1cm]f);

\draw [color = blue]([xshift=0cm]b.north) --++ (0, 0.5) coordinate (f);
\draw [color = blue]([xshift=-0.15cm]b.north) --++ (0, 0.5);

\draw [color=red] ([xshift=0.3cm]a.north) to ([xshift=1.75cm]f) coordinate (e);
\node[above] at ([xshift=-0.1cm]e) {$R_n \setminus \Lambda$};
\node[above] at (f) {$\Lambda_n$};
\end{tikzpicture}
\end{center}
Because elements of the above form are norm-dense, we conclude that the diagram commutes.

To check the appropriate unitality diagrams for the tensorator, we can again take the multiplicity space $H_x$ to be $\mathbb C$. Furthermore, since $\cK_1$ is the monoidal unit for $\SSS$, the diagram we need to verify commutes is
\begin{center}
    \begin{tikzcd}
        \cK_1(\Lambda)\boxtimes (H_x \otimes \cK_x) \arrow[d, "\lambda_{\SSS}"] \arrow[dr, "\mu"] & \\
        H_x \otimes \cK_x & H_x \otimes \cK_{1 \otimes x} \arrow[l, "\cK(\lambda_{\mathsf{Hilb}\langle X \rangle})"] 
    \end{tikzcd}
\end{center}
Taking a simple tensor $a \boxtimes g_x$ (with $a \in \mathfrak{A}(R_n \cap \Lambda) \cong \Hom_{\mathfrak{A}(R_n\setminus \Lambda)}(\cK_1(R_n), \cK_1 (R_n))$ and $g_x\in \cK_x(R_n)$), we see its image under either path is

\begin{center}
    \begin{tikzpicture}[baseline={(current bounding box.center)}, thick]
        \draw(0,0) node[draw=black,fill=white,very thick,rounded corners,baseline=center, inner xsep=0.5cm] (c){$g_x$};
\draw [color = blue]([xshift=-0.15cm]c.north) --++ (0, 0.65) coordinate (d);
\draw [color = blue]([xshift=-0.3cm]c.north) to ([xshift=-0.15cm]d);
\draw [color = blue]([xshift=0cm]c.south) --++ (0, -0.5);
\draw [color = blue]([xshift=-0.15cm]c.south) --++ (0, -0.5);
\node[above] at ([xshift=-0.15cm]d) {$\Lambda_n$};

\draw([xshift=0cm, yshift=-1cm]c) node[draw=black,fill=white,very thick,rounded corners,baseline=center, inner xsep=0.5cm] (a){$a$};
\draw [color = blue]([xshift=0cm]a.south) --++ (0, -0.5) coordinate (e);
\draw [color = blue]([xshift=-0.15cm]a.south) --++ (0, -0.5);
\node[below] at (e) {$\Lambda_n$};

\draw ([xshift=-0.45cm]c.south) to [out=-90, in=90]  ([xshift=-0.75cm]a) coordinate (z);
\draw (z) to node[pos=1, below] {$x$} ([xshift=-0.75cm, yshift=-0.5cm]a.south);
\draw [color=red] ([xshift=0.3cm]c.north) to ([xshift=0.75cm]d) coordinate (e);
\node[above] at ([xshift=0.25cm]e) {$R_n \setminus \Lambda$};
    \end{tikzpicture}
\end{center}

\noindent Again, by density, we conclude that the diagram commutes. 
The second unitality axiom is similar and left to the reader.

Finally, to verify the braided equivalence, we check the following diagram commutes.
\begin{center}
    \begin{tikzcd}
        (H_x \otimes \cK_x)(\Lambda)\boxtimes (H_y \otimes \cK_y) \arrow[r, "\beta_{\SSS}"] \arrow[d, "\phi"]& (H_y \otimes \cK_y)(\Lambda) \boxtimes (H_x \otimes \cK_x) \arrow[d, "\phi"] \\
        (H_x \otimes H_y) \otimes \cK_{xy} \arrow[r, "\cK(\beta_{\mathsf{Hilb}\langle X \rangle})"] & (H_y \otimes H_x) \otimes \cK_{yx}
    \end{tikzcd}
\end{center}
We can once again assume $H_x=H_y=\mathbb C$. Taking a splitting $(\Lambda^r,\Lambda^s)$ of $\Lambda$, we know that the space $\cK_x(\Lambda)\boxtimes \cK_y$ will be norm-densely spanned by elements of the form $f_x^r \boxtimes f_y^s(g_1)$, where $f_x^r \in \Hom_{\mathfrak{A}(R_n \setminus \Lambda^r)}(\cK_1(R_n),\cK_x(R_n))$, $f_y^s \in \Hom_{\mathfrak{A}(R_n \setminus \Lambda^s)}(\cK_1(R_n),\cK_y(R_n))$, and $g_1 \in \cK_1(R_n)$. We then have that the two possible images of this simple tensor under the diagram are
\[
    \begin{tikzpicture}[baseline={(current bounding box.center)}, thick]
   \draw(0,0) node[draw=black,fill=white,very thick,rounded corners,baseline=center, inner xsep=0.5cm] (a){$F$};
   \draw [color=blue] ([xshift=0cm]a.north) --++ (0, 1) coordinate (d);
   \draw [color=blue] ([xshift=-0.15cm]a.north) --++ (0, 1);
   \draw [color=blue] ([xshift=0.3cm]a.south) --++ (0, -1.75);
   \draw [color=blue] ([xshift=0.15cm]a.south) --++ (0, -1.75);
   \draw ([xshift=-0.3cm]a.south) to [out=-90, in=90] node[pos=1, below, color = black, opacity = 1] {$a$} ([xshift=-0.3cm, yshift=-3.1cm]a.south);
   \node[above] at ([yshift=1cm]a.north) {$\Lambda_n^r$};

   \draw([xshift=1.05cm, yshift=-1.75cm]a) node[draw=black,fill=white,very thick,rounded corners,baseline=center, inner xsep=0.5cm] (a){$G$};
   \draw [color=brown] (a.north) to ([xshift=1.05cm]d);
   \draw [color=brown] ([xshift=-0.15cm]a.north) to ([xshift=0.925cm]d) coordinate (w);
   \draw [color=brown] ([xshift=0.3cm]a.south) --++ (0, -1);
   \draw [color=brown] ([xshift=0.15cm]a.south) --++ (0, -1);
   \draw([xshift=0.25cm, yshift=-1.4cm]a) node[draw=black, dashed, fill=white,very thick,rounded corners,baseline=center, inner xsep=0.6cm] (c){$h$};
   \draw [color=red] ([xshift=0.5cm]c.north) to ([xshift=1.85cm]d) coordinate (v);
   \draw [color=blue] ([xshift=-0.35cm]c.north) to [out=90, in=-90] ([xshift=-0.75cm]a.south) coordinate (x);
   \draw [color=blue] ([xshift=-0.5cm]c.north) to [out=90, in=-90] ([xshift=-0.9cm]a.south) coordinate (y);
   \draw [knot] ([xshift=-0.3cm]a.south) to [out=-90, in=90] ([xshift=-1.75cm, yshift=-1.33cm]a.south) coordinate (z);
   \node[below] at (z) {$b$};
   \node[above] at (w) {$\Lambda_n^s$};
   \node[above] at ([xshift=0.25cm]v) {$R_n \setminus \Lambda$};

   \node[] (v) at ([xshift=1.75cm, yshift=0.75cm]a) {$=$};
   
   \draw([xshift=2cm, yshift=1cm]v) node[draw=black,fill=white,very thick,rounded corners,baseline=center, inner xsep=0.5cm] (a){$G$};
   \draw [color=brown] ([xshift=0cm]a.north) --++ (0, 1);
   \draw [color=brown] ([xshift=-0.15cm]a.north) --++ (0, 1);
   \draw [color=brown] ([xshift=0.3cm]a.south) --++ (0, -1);
   \draw [color=brown] ([xshift=0.15cm]a.south) --++ (0, -1) coordinate (y);
   \node[above] at ([yshift=1cm]a.north) {$\Lambda_n^s$};

   \draw([xshift=-0.65cm, yshift=-1.75cm]a) node[draw=black,fill=white,very thick,rounded corners,baseline=center, inner xsep=0.5cm] (f){$F$};
   \draw [color=blue] ([xshift=-0.15cm]f.north) to ([xshift=-0.95cm, yshift=2.55cm]y);
   \draw [color=blue] ([xshift=-0.3cm]f.north) to ([xshift=-1.1cm, yshift=2.55cm]y) coordinate (w);
   \draw [color=blue] ([xshift=0.3cm]f.south) --++ (0, -1);
   \draw [color=blue] ([xshift=0.15cm]f.south) --++ (0, -1);
   \draw ([xshift=-0.3cm]f.south) to [out=-90, in=90] ([xshift=-0.3cm, yshift=-1.33cm]f.south) coordinate (z);
   \node[below] at (z) {$a$};
   \draw[knot] ([xshift=-0.3cm]a.south) to [out=-90, in=90] ([xshift=-0.8cm]f.north) coordinate (g);
   \draw (g) to node[pos=1, below] {$b$} ([xshift=-0.55cm]z);
   \draw([xshift=0.75cm, yshift=-1.4cm]f) node[draw=black, dashed, fill=white,very thick,rounded corners,baseline=center, inner xsep=0.75cm] (c){$h$};
   \draw [color=brown] ([xshift=0.05cm]c.north) --++ (0,2);
   \draw [color=brown] ([xshift=0.2cm]c.north) --++ (0,2);
   \draw [color=red] ([xshift=0.7cm]c.north) to ([xshift=1.8cm]w) coordinate (x);
   \node[above] at (w) {$\Lambda_n^r$};
   \node[above] at ([xshift=0.25cm]x) {$R_n \setminus \Lambda$};
\end{tikzpicture}
\]
which are equal. By density, we conclude that the diagram commutes.
\end{proof}

\appendix
\section{Pentagon and Triangle Identities}\label{app:pentagon-triangle}
Here we give a direct proof of Proposition~\ref{prop:sss-monoidal}:
the associator 
    \begin{align*}
        \alpha_{A, B, C} \colon (A \boxtimes B(p)) \boxtimes C(p) &\to A \boxtimes (B \boxtimes C(p))(p) \\
        (\eta_a \otimes g_b) \otimes g_c &\mapsto \eta_a \otimes (g_B(-) \otimes g_c)
    \end{align*}
is a unitary intertwiner in $\SSS$ which satisfies the pentagon relation.
\begin{proof}
    The associator is manifestly natural. We show that it is unitary. In the following diagram, $\alpha_{A,B,C}$ is a priori defined on a dense subspace of $(A \boxtimes B(p))\boxtimes C(p)$.
\[\begin{tikzcd}[column sep=1.5em, row sep=1em]
	{(A \boxtimes B(p)) \boxtimes C(p)} && {A \boxtimes (B \boxtimes C(p))(p)} \\
	\\
	{(A(p') \boxtimes B(p)) \boxtimes C(p)} && {A(p') \boxtimes (B \boxtimes C(p))(p)} \\
	\\
	{(A(p') \boxtimes B) \boxtimes C(p)} && {A(p') \boxtimes(B \boxtimes C(p))}
	\arrow["{\alpha_{A, B, C}}", from=1-1, to=1-3]
	\arrow["{\cL_{p', p} \boxtimes \id_{C(p)}}", from=3-1, to=1-1]
	\arrow["{\cR_{p', p} \boxtimes \id_{C(p)}}"', from=3-1, to=5-1]
	\arrow["{\cL_{p', p}}"', from=3-3, to=1-3]
	\arrow["{\cR_{p', p}}", from=3-3, to=5-3]
	\arrow[from=5-1, to=5-3]
\end{tikzcd}\]

    All arrows in this diagram are unitary. The bottom arrow is given by reparenthesization. For $(f_A \otimes \xi \otimes g_B) \otimes g_C \in (A(p') \boxtimes B(p)) \boxtimes C(p)$, we compute clockwise:
    \begin{align*}
        (f_A \otimes \xi \otimes g_B) \otimes g_C &\mapsto (f_A(\xi) \otimes g_B) \otimes g_C \mapsto f_A(\xi) \otimes (g_B(-) \otimes g_C) \\
        &\mapsto f_A \otimes \xi \otimes (g_B(-)\otimes g_c) \mapsto f_A \otimes (g_B(\xi) \otimes g_C) \\
        &\mapsto (f_A \otimes g_B(\xi)) \otimes g_C \mapsto (f_A \otimes \xi \otimes g_B) \otimes g_C,
    \end{align*}
    so the diagram commutes. Observe that for disjoint $r,s\in\cP$, the reparenthesization map 
    \[
    \widetilde{\alpha}_{A,B,C,r,s}:(A(r)\boxtimes B)\boxtimes C(s)\to A(r)\boxtimes(B\boxtimes C(s))
    \]
    intertwines $A_r$ and $A_s$, and satisfies naturality in disjoint pairs $(r,s)$. That is, if $(r_1,s_1),(r_2,s_2)$ are two disjoint pairs in $\cP$ and $r_1\leq r_2$ and $s_1\leq s_2$, then the diagram below commutes.
    \[\begin{tikzcd}[column sep=2.5em, row sep=1em]
	{(A(r_1)\boxtimes B)\boxtimes C(s_1)} && {A(r_1)\boxtimes (B\boxtimes C(s_1))} \\
	\\
	{(A(r_2)\boxtimes B)\boxtimes C(s_2)} && {A(r_2)\boxtimes (B\boxtimes C(s_2))}
	\arrow["{\widetilde{\alpha}_{A,B,C,r_1,s_1}}", from=1-1, to=1-3]
	\arrow[from=1-1, to=3-1]
	\arrow[from=1-3, to=3-3]
	\arrow["{\widetilde{\alpha}_{A,B,C,r_2,s_2}}"', from=3-1, to=3-3]
\end{tikzcd}\]
Therefore, the reparenthesization map (and hence the associator) intertwines $\cA$. Morevoer, $\alpha_{A,B,C}$ extends by continuity and is itself unitary.

    Furthermore, the pentagon axiom holds.
\[\begin{tikzcd}[column sep=0, row sep=1em]
	& {((A \boxtimes B(p))\boxtimes C(p)) \boxtimes D(p)} & \\
	\\
	{(A \boxtimes (B \boxtimes C(p))(p)) \boxtimes D(p)} && {(A \boxtimes B(p)) \boxtimes (C \boxtimes D(p))(p)} \\
	\\
	{A \boxtimes ((B \boxtimes C(p)) \boxtimes D(p))(p)} && {A \boxtimes (B \boxtimes (C \boxtimes D(p))(p))(p)}
	\arrow["{\alpha_{A,B,C} \boxtimes \id}"', from=1-2, to=3-1]
	\arrow["{\alpha_{A \boxtimes B(p), C, D}}", from=1-2, to=3-3]
	\arrow["{\alpha_{A, B \boxtimes C(p), D}}"', from=3-1, to=5-1]
	\arrow["{\alpha_{A, B, C \boxtimes D(p)}}", from=3-3, to=5-3]
	\arrow["{\id \boxtimes \alpha_{B, C, D}}"', from=5-1, to=5-3]
\end{tikzcd}\]
    For an element of $(((A \boxtimes B)\boxtimes C) \boxtimes D)$, going through the left path is the computation
    \begin{align*}
    (((\eta_A \otimes g_B) \otimes g_C) \otimes g_D) &\mapsto (\eta_A \otimes (g_B(-)\otimes g_C)) \otimes g_D \mapsto \eta_A \otimes ((g_B(-) \otimes g_C))(-) \otimes g_D \\
    &\mapsto \eta_A \otimes (\alpha_{B, C, D} \circ ((g_B(-) \otimes g_C) (-) \otimes g_D))
    \end{align*}
    On the other hand, going through the right path is the computation 
    $$
    ((\eta_A \otimes g_B) \otimes g_C) \otimes g_D \mapsto (\eta_A \otimes g_B) \otimes (g_C(-) \otimes g_D) \mapsto \eta_A \otimes (g_B(-) \otimes (g_C(-) \otimes g_D)).
    $$
    To see that these agree, we check that $\alpha_{B, C, D} \circ ((g_B(-) \otimes g_C)(-) \otimes g_D) = (g_B(-) \otimes (g_C(-) \otimes g_D))$. 
    Evaluating at $\xi \in A \boxtimes (B \boxtimes (C \boxtimes D(p))(p))(p)$,

    \begin{align*}
    (g_B(-)\otimes (g_C(-) \otimes g_D))(\xi) 
    &= 
    g_B(\xi) \otimes (g_C(-) \otimes g_D) 
    \\&= 
    \alpha_{B, C, D} \circ ((g_B(\xi) \otimes g_C) \otimes g_D) 
    \\&= 
    \alpha_{B, C, D} \circ ((g_B(-) \otimes g_C)(\xi) \otimes g_D) 
    \\&= 
    \alpha_{B, C, D} \circ ((g_B(-) \otimes g_C)(-) \otimes g_D)(\xi).
    \end{align*} 
    
    The unitors are clearly natural. 
    One can quickly check that they are unitary intertwiners and satisfy the triangle axiom.
\end{proof}
\section{Hexagon Identities}\label{app:hexagon-identities}
In this section, we directly verify the following hexagon axiom diagrams commute for the braiding $\beta$ on $(\SSS,\boxtimes_p)$.
\begin{equation}\label{braidhex1}
    \begin{tikzcd}[column sep=1em, row sep=1em]
        & A \boxtimes (B \boxtimes C(p))(p) \arrow[r, "\beta"] & (B \boxtimes C(p)) \boxtimes A(p) \arrow[dr, "\alpha"] & \\
        (A \boxtimes B(p)) \boxtimes C(p) \arrow[ur, "\alpha"] \arrow[dr, "\beta \boxtimes \Id_C"'] & & & B \boxtimes (C \boxtimes A(p))(p) \\
        & (B \boxtimes A(p)) \boxtimes C(p) \arrow[r, "\alpha"']& B \boxtimes (A \boxtimes C(p))(p) \arrow[ur, "\Id_B \boxtimes \beta"'] &
    \end{tikzcd}
\end{equation}

\begin{equation}\label{braidhex2}
    \begin{tikzcd}[column sep=1em, row sep=1em]
        & (A \boxtimes B(p)) \boxtimes C(p) \arrow[r, "\beta"] & C \boxtimes (A \boxtimes B(p))(p) \arrow[dr, "\alpha^{-1}"] & \\
        A \boxtimes (B \boxtimes C(p))(p) \arrow[ur, "\alpha^{-1}"] \arrow[dr, "\Id_A \boxtimes \beta"'] & & & (C \boxtimes A(p))\boxtimes B(p) \\
        & A \boxtimes (C \boxtimes B(p))(p) \arrow[r, "\alpha^{-1}"']& (A \boxtimes C(p)) \boxtimes B(p) \arrow[ur, "\beta \boxtimes \Id_B"'] &
    \end{tikzcd}
\end{equation}
We use $\eta$'s with subscripts to denote vectors, and $g$'s with subscripts to denote $A_p'$-linear Homs. 
We first verify that diagram~\ref{braidhex1} commutes. 
Consider the element $(\eta_A \otimes g_B) \otimes g_C \in (A \boxtimes B(p))\boxtimes C(p)$. Choose a unitary $u \in \Hom_{A_p'}(H, A)$. We then have, tracing along the top half of \ref{braidhex1}, 
\begin{align*}
(\eta_A \otimes g_B) \otimes g_C &\xmapsto{\alpha} \eta_A \otimes (g_B(-)\otimes g_C) \\
&\xmapsto{\beta}(g_B(-)\otimes g_C)(u^*\eta_A) \otimes u \\
&= (g_B(u^*\eta_A)\otimes g_C)\otimes u \\
&\xmapsto{\alpha} g_B(u^*\eta_A) \otimes (g_C(-)\otimes u)
\end{align*}
On the other hand, if we trace along the bottom half of $\ref{braidhex1}$, we observe that
\begin{align*}
(\eta_A \otimes g_B)\otimes g_C &\xmapsto{\beta \boxtimes \Id_C}(g_B(u^*\eta_A) \otimes u)\otimes g_C \\
&\xmapsto{\alpha}g_B(u^*\eta_A) \otimes (u(-) \otimes g_C) \\
&\xmapsto{\Id_B \boxtimes \beta} g_B(u^*\eta_A) \otimes (\beta \circ (u(-)\otimes g_C) \\
&=g_B(u^*\eta_A) \otimes (g_C(-)\otimes u)
\end{align*}
Where the last equality follows from the fact that $\beta(u(\xi) \otimes g_C) = g_C(\xi) \otimes u$ for any $\xi \in H$.
\par Now, to verify diagram~\ref{braidhex2}, choose a splitting $r_1,r_2 \leq r$ (where $r$ is used in the definition of the braiding), and pick unitaries $v \in \Hom_{A_{r_1'}}(H,A)$ and $w \in \Hom_{A_{r_2'}}(H,B)$. We claim that $v(-) \otimes w$ is a unitary in $\Hom_{A_{r'}}(H, A \boxtimes B(p))$. Note that since $r_1,r_2 \leq r$, $v(-) \otimes w$ is certainly $A_{r'}$ -linear. Thus, to verify that it is a unitary, we compute
$$\langle (v(-)\otimes w)^*( \xi_A \otimes f_B)| \xi \rangle = \langle \xi_A \otimes f_B| v(\xi) \otimes w \rangle = \langle \pi_p^A(w^*f_b) \xi_A| v(\xi)\rangle = \langle v^* \pi_p^A(w^*f_b)\xi_A| \xi\rangle$$
from which we conclude that $(v(-)\otimes w)^*(\xi_A \otimes f_B)= v^*\pi_p^A(w^*f_B)\xi_A$. We now see that
$$(v(-)\otimes w)^*(v(-)\otimes w)(\xi)= (v(-)\otimes w)^*(v(\xi) \otimes w) =v^*\pi_p^A(w^*w)v(\xi)=\xi$$
and
\begin{align*}
    (v(-)\otimes w)(v(-)\otimes w)^*(\xi_A \otimes f_B)&= (v(-)\otimes w)(v^*\pi_p^A(w^*f_B)\xi_A)=v(v^*\pi_p^A(w^*f_B)\xi_A)\otimes w \\
    &= \xi_A \otimes ww^*f_B = \xi_A \otimes f_B
\end{align*}
Returning to diagram~\ref{braidhex2}, we may use the fact that the span of simple tensors $\eta_A \otimes (g_B(-) \otimes g_C)$ is dense in $A \boxtimes (B \boxtimes C(p))(p)$ as $\alpha$ is unitary. Thus commutativity of the diagram may be verified with elements of this form. Furthermore, because the inclusion map $B(r_2)\hookrightarrow B(p)$ is unitary (and so has dense range), we may furthermore suppose that $g_B$ is an $A_{r_2'}$-linear map. Proceeding along the top third of the diagram, we have
\begin{align*}
    \eta_A \otimes (g_B(-) \otimes g_C) &\xmapsto{\alpha^{-1}} (\eta_A \otimes g_B) \otimes g_C \\
    &\xmapsto{\beta}g_C((v(-)\otimes w)^*(\eta_A \otimes g_B)) \otimes (v(-)\otimes w) \\
    &=g_Cv^*\pi_p^A(w^*g_B)\eta_A \otimes (v(-) \otimes w)
\end{align*}
On the other hand, we may proceed along the bottom half of the diagram (and then apply the upward map $\alpha$), and we see
\begin{align*}
    \eta_A \otimes (g_B(-) \otimes g_C) &\xmapsto{\id_A \otimes \beta} \eta_A \otimes (\beta \circ (g_B(-) \otimes g_C)) \\
    &= \eta_A \otimes (g_Cw^*g_B(-) \otimes w)\\
    &\xmapsto{\alpha^{-1}} (\eta_A \otimes g_Cw^*g_B) \otimes w\\
    &\xmapsto{\beta \boxtimes \Id_B} (g_Cw^*g_B(v^*\eta_A) \otimes v) \otimes w \\
    &\xmapsto{\alpha} g_Cw^*g_B(v^*\eta_A) \otimes (v(-) \otimes w)
\end{align*}
Noting that $w^*g_B$ is an $A_{r_2'}$-linear endomorphism of $H$, it must belong to $A_{r_2}\subset A_p$, and as $v^*$ is $A_p$-linear, we have
$$g_Cw^*g_B(v^*\eta_A) \otimes (v(-) \otimes w)=g_Cv^*\pi_p^A(w^*g_B)\eta_A \otimes (v(-) \otimes w)$$
as desired.

\newcommand{\etalchar}[1]{$^{#1}$}

\end{document}